\DeclareSymbolFont{bbold}{U}{bbold}{m}{n}
\DeclareSymbolFontAlphabet{\mathbbold}{bbold}
\newcommand{\ii}{{\bm{i}}}
\newcommand{\jj}{{\bm{j}}}
\renewcommand{\mm}{{\bm{m}}}
\renewcommand{\nn}{{\bm{n}}}
\newcommand{\MSCSubjectCode}[1]{\href{https://zbmath.org/classification/?q=cc\%3A#1}{#1}}
\begin{document}


\title{Gevrey Asymptotic Implicit Function Theorem}

\author{Nikita Nikolaev}

\affil{\small School of Mathematics and Statistics, University of Sheffield, United Kingdom}

\date{20 December 2021}

\maketitle
\thispagestyle{frontpage}

\begin{abstract}
\noindent
We prove an Asymptotic Implicit Function Theorem in the setting of Gevrey asymptotics with respect to a parameter.
The unique implicitly defined solution admits a Gevrey asymptotic expansion and furthermore it is the Borel resummation of the corresponding implicitly defined formal power series solution.
The main theorem can therefore be rephrased as an Implicit Function Theorem for Borel summable power series.
As an application, we give a diagonal or Jordan decomposition for holomorphic matrices in Gevrey asymptotic families.
\end{abstract}

{\small
\textbf{Keywords:}
exact perturbation theory, singular perturbation theory, Borel summation, Borel-Laplace theory, asymptotic analysis, Gevrey asymptotics, resurgence, exact WKB analysis

\textbf{2020 MSC:}
	\MSCSubjectCode{58C15} (primary),
	\MSCSubjectCode{32A05},
	\MSCSubjectCode{40G10},
	\MSCSubjectCode{35C10}
}


{\begin{spacing}{0.9}
\small
\setcounter{tocdepth}{3}
\tableofcontents
\end{spacing}
}

\newpage
\section{Introduction}

Suppose $\FF (x, \hbar, z)$ is a holomorphic, perhaps vector-valued, function of several complex variables $x$ and $z$ and a small complex perturbation parameter $\hbar$ constrained to some sector at the origin in the $\hbar$-plane where $\FF$ admits an asymptotic expansion $\hat{\FF} (x, \hbar, z)$ as $\hbar \to 0$.
This paper arose from the following question: what is the meaning of a formal $\hbar$-power series solution $z = \hat{f} (x, \hbar)$ of the formal equation $\hat{\FF} (x, \hbar, z) = 0$?
The answer we find is that, provided sufficient control on the asymptotics of $\FF$, the formal solution $\hat{f}$ is the asymptotic expansion of an actual solution $z = f (x, \hbar)$ of the analytic equation $\FF (x, \hbar, z) = 0$, and furthermore $f$ is the Borel resummation of $\hat{f}$.
Thus, the purpose of this article is to prove the following version of the Implicit Function Theorem in the setting of Gevrey asymptotics.

\vspace{-5pt}

\begin{thm}[Gevrey Asymptotic Implicit Function Theorem]{211125100306}
\mbox{}\\
Fix a point $(x_0, z_0) \in \Complex^d_x \times \Complex^\NN_z$ with $d \geq 0$ and $\NN \geq 1$.
Let $X \subset \Complex^d_x$ be a domain containing $x_0$ and $S \subset \Complex_\hbar$ a sectorial domain with vertex at the origin and opening arc $A$ with opening angle $|A| = \pi$.
Suppose $\FF$ is a holomorphic map $X \times S \times \Complex^\NN_z \to \Complex^\NN$ which admits a Gevrey asymptotic expansion
\vspace{-10pt}
\eqntag{\label{211208193550}
	\FF (x, \hbar, z) 
		\simeq \hat{\FF} (x, \hbar, z)
		= \sum_{k=0}^\infty \FF_k (x,z) \hbar^k
\quad
\text{as $\hbar \to 0$ along $\bar{A}$
\fullstop{,}}\vspace{-10pt}
}
uniformly for all $x \in X$ and locally uniformly for all $z \in \Complex^\NN_z$.
Suppose its leading-order part in $\hbar$ satisfies $\FF_0 (x_0, z_0) = 0$ and the Jacobian $\del \FF_0 \big/ \del z$ is invertible at $(x_0, z_0)$.

Then there is a subdomain $X_0 \subset X$ containing $x_0$ and a sectorial subdomain $S_0 \subset S$ with the same opening $A$ such that there is a unique holomorphic map $f : X_0 \times S_0 \to \Complex^\NN$ which admits a Gevrey asymptotic expansion
\vspace{-10pt}
\eqntag{\label{211208194052}
	f (x, \hbar) \simeq \hat{f} (x, \hbar) = \sum_{n=0}^\infty f_n (x) \hbar^n
\quad
\text{as $\hbar \to 0$ along $\bar{A}$\fullstop{,}}\vspace{-10pt}
}
uniformly for all $x \in X_0$, and such that
\vspace{-10pt}
\eqntag{
	f_0 (x_0) = z_0
\qqtext{and}
	\FF \big(x, \hbar, f(x,\hbar) \big) = 0
\qquad
	\text{$\forall (x,\hbar) \in X_0 \times S_0$\fullstop}\vspace{-7pt}
}
Furthermore, $f$ is the uniform Borel resummation of $\hat{f}$ in the direction $\theta$ that bisects the arc $A$: for all $(x,\hbar) \in X_0 \times S_0$,
\vspace{-10pt}
\eqntag{\label{211125161647}
	f (x, \hbar) = \cal{S}_\theta \big[ \: \hat{f} \: \big] (x, \hbar)
\fullstop\vspace{-15pt}
}
\end{thm}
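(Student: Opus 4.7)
\medskip

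\noindent\textbf{Proof plan.} My strategy is to proceed in four stages: construct the formal power series solution $\hat{f}$ with Gevrey-$1$ control on its coefficients, construct an actual holomorphic solution $f$ on a sectorial subdomain, match the two as a Gevrey asymptotic expansion, and finally conclude Borel summability from the standard Gevrey Borel-Laplace theory.

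First, I build $\hat{f}$ coefficient by coefficient. The leading coefficient $f_0$ comes from the classical analytic Implicit Function Theorem applied to $\FF_0(x, z) = 0$ near $(x_0, z_0)$: this gives a holomorphic $f_0 : X_1 \to \Complex^\NN$ with $f_0(x_0) = z_0$ on some subdomain $x_0 \in X_1 \subset X$, on which the Jacobian $J(x) := (\del \FF_0/\del z)(x, f_0(x))$ remains invertible. The higher $f_n$ are then determined by expanding the formal identity $\hat{\FF}(x, \hbar, \hat{f}(x, \hbar)) \equiv 0$ in powers of $\hbar$: the coefficient of $\hbar^n$ gives a linear equation $J(x) f_n(x) = P_n(x)$, where $P_n$ is a polynomial expression in the $\FF_k$ ($k \leq n$), the $f_m$ ($m < n$), and their $z$-derivatives. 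Each $f_n$ is thus uniquely determined and holomorphic on $X_1$; the Gevrey bounds on the $\FF_k$ supplied by the hypothesis propagate through this recursion via a standard majorant argument, yielding Gevrey-$1$ bounds $\|f_n\|_{X_0'} \leq C M^n n!$ on any relatively compact subdomain $X_0' \subset X_1$.

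Next, I build the analytic solution. Setting $z = f_0(x) + w$, the equation $\FF(x, \hbar, z) = 0$ becomes $G(x, \hbar, w) := \FF(x, \hbar, f_0(x) + w) = 0$, with $G(x, 0, 0) = 0$ and $(\del G/\del w)(x, 0, 0) = J(x)$ invertible. I reformulate this as the fixed-point problem $w = T(x, \hbar, w) := w - J(x)^{-1} G(x, \hbar, w)$ on a small closed $w$-polydisc; the Gevrey asymptotics of $\FF$ ensure that, after shrinking $X$ to some $X_0 \ni x_0$ and passing to a subsector $S_0 \subset S$ of the same opening $A$, the map $T$ becomes a contraction uniformly in $(x, \hbar) \in X_0 \times S_0$, so Banach's theorem produces the unique holomorphic solution $f : X_0 \times S_0 \to \Complex^\NN$.

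Matching $f$ to $\hat{f}$ rests on the key estimate that the partial sum $\hat{f}^{\leq N}(x, \hbar) := \sum_{n \leq N} f_n(x) \hbar^n$ satisfies $|\FF(x, \hbar, \hat{f}^{\leq N}(x, \hbar))| \leq C_1^{N+1} (N+1)! \, |\hbar|^{N+1}$ uniformly, which combines the Gevrey asymptotics of $\FF$ with the construction of the $f_n$. Inverting via the uniform Lipschitz estimate for $G$ on the contraction domain yields the Gevrey-$1$ bound $|f(x, \hbar) - \hat{f}^{\leq N}(x, \hbar)| \leq C_2^{N+1} (N+1)! \, |\hbar|^{N+1}$, uniformly in $x \in X_0$. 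Since the opening of $S_0$ equals $\pi$, Watson's lemma together with the Nevanlinna--Sokal characterisation of Gevrey-$1$ Borel summability then forces $\hat{f}$ to be Borel summable in the bisecting direction $\theta$ with $\cal{S}_\theta[\hat{f}] = f$. The hardest part I anticipate is ensuring that every estimate --- contraction radius, Lipschitz constant, and Gevrey remainder bound --- is genuinely \emph{uniform} in $(x, \hbar) \in X_0 \times S_0$, which is precisely where the Gevrey uniformity hypothesis on $\FF$, rather than a merely pointwise or $C^\infty$-asymptotic hypothesis, is essential.
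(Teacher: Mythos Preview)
Your strategy is sound and represents a genuinely different route from the paper's. The paper does \emph{not} construct $f$ by a Banach contraction in the $\hbar$-plane. After a change of variables (the paper uses $z = f_0 + \hbar(f_1 + w)$, one order deeper than yours, to reach the standard form $w = \hbar\,\GG(x,\hbar,w)$), it applies the \emph{analytic Borel transform} and obtains a nonlinear first-order ODE with convolution for $\sigma = \Borel[w]$ in the Borel $\xi$-plane. This is rewritten as an integral equation and solved by successive approximations, with exponential bounds proved via a majorant series whose convergence is itself established by an auxiliary application of the holomorphic Implicit Function Theorem. The Laplace transform $g = \Laplace[\sigma]$ then automatically has $\hat g$ as its Gevrey expansion and is by construction the Borel sum; Nevanlinna's theorem is invoked only for uniqueness. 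So the paper builds the Borel transform first and gets $f$ together with its Borel summability in one stroke, whereas you build $f$ first and deduce Borel summability afterwards via Nevanlinna--Sokal. Your approach stays closer to classical IFT machinery and avoids the Borel plane entirely; the paper's approach gives direct analytic control on the Borel transform of $\hat f$, which is what one wants for resurgence-type applications.

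There is one genuine gap in your matching step. The partial sums $\hat f^{\le N}(x,\hbar)$ need not remain in the $w$-polydisc where your Lipschitz lower bound for $G$ holds: since $|f_n| \le C M^n n!$ grows factorially, $\sum_{n\le N}|f_n|\,|\hbar|^n$ is unbounded once $N \gg 1/(M|\hbar|)$. For the same reason, the Gevrey hypothesis on $\FF$ is only \emph{locally} uniform in $z$, so the constant $C_1$ in your claimed bound $|\FF(x,\hbar,\hat f^{\le N})| \le C_1^{N+1}(N{+}1)!\,|\hbar|^{N+1}$ is not obviously independent of $N$. The standard repair is a two-regime argument: for $N \lesssim 1/(M|\hbar|)$ the partial sums remain in a fixed compact set and your Lipschitz inversion goes through; for larger $N$ one bootstraps from the estimate at the optimal truncation $N_0 \approx 1/(M|\hbar|)$ together with the Gevrey bound on the tail $\sum_{N_0 < n \le N} f_n \hbar^n$. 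This is routine but not automatic, and your sketch should flag it.
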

\enlargethispage{15pt}

This theorem provides a general answer in a large class of problems to the question of developing a theory of asymptotic implicit function theorems.
Such a question in a specialised setting was posed by Gérard and Jurkat in \cite[p.45]{MR1188077}, but to the best of our knowledge has not been addressed\footnote{In particular, the promised second part of their 1992 paper \cite{MR1188077} has not appeared.}.
In addition, our techniques give a much more refined information about the implicit function $f$, chiefly its uniform Borel summability properties.

\vspace{-5pt}

\paragraph{Application: Linear Algebra in Gevrey Asymptotic Families.}
As an application, which serves as the main source of motivation for us, \autoref{211125100306} can be used to diagonalise holomorphic matrices $\AA (x,\hbar)$ in uniform Gevrey asymptotic families; i.e., via transformations with the same regularity as $\AA$.
This means that the eigenvalues and the eigenspaces of $\AA$ are guaranteed to have the same asymptotic behaviour as $\hbar \to 0$ as the matrix $\AA$ itself.
More precisely, we prove the following diagonalisation theorem when the leading-order eigenvalues of $\AA$ are all distinct, which follows from the more general Jordan block decomposition \autoref{211210093539}.

\begin{thm}[{Diagonalisation in Gevrey Asymptotic Families}]{211209190924}
\mbox{}\\
Fix a domain $X \subset \Complex^d_x$ and a point $x_0 \in X$.
Let $S \subset \Complex_\hbar$ be a sectorial domain at the origin and opening arc $A$ with opening angle $|A| = \pi$.
Let $\AA = \AA (x, \hbar)$ be a holomorphic $n\!\!\times\!\!n$-matrix on $X \times S$ which admits a uniform Gevrey asymptotic expansion
\vspace{-5pt}
\eqntag{\label{211209195150}
	\AA (x, \hbar) \simeq \hat{\AA} (x, \hbar) = \sum_{k=0}^\infty \AA_{k} (x) \hbar^k
\quad
\text{as $\hbar \to 0$ along $\bar{A}$, unif. $\forall x \in X$\fullstop}\vspace{-5pt}
}
Suppose that the $\hbar$-leading-order part $\AA_{00} \coleq \AA_0 (x_0)$ at the point $x_0$ has distinct eigenvalues $a_1, \ldots, a_n \in \Complex$.
Let $\PP_{00}$ be an invertible $n\!\!\times\!\!n$-matrix that diagonalises $\AA_{00}$:
\vspace{-10pt}
\eqntag{\label{211210132517}
	\PP_{00}^\phantomindex \AA_{00}^\phantomindex \PP_{00}^{-1} = \diag \big( a_1, \ldots, a_n \big)
\fullstop
}
Then there is a subdomain $X_0 \subset X$ containing $x_0$ and a sectorial subdomain $S_0 \subset S$ with the same opening $A$ such that there is a unique holomorphic invertible $n\!\times\!n$-matrix $\PP = \PP (x, \hbar)$ on $X_0 \times S_0$ that admits a uniform Gevrey asymptotic expansion
\vspace{-5pt}
\eqntag{\label{211209195200}
	\PP (x, \hbar) \simeq \hat{\PP} (x, \hbar) = \sum_{k=0}^\infty \PP_{k} (x) \hbar^k
\quad
\text{as $\hbar \to 0$ along $\bar{A}$, unif. $\forall x \in X_0$\fullstop{,}}\vspace{-5pt}
}
such that $\PP_0 (x_0) = \PP_{00}$ and which diagonalises $\AA$; i.e.,
\vspace{-5pt}
\eqntag{\label{211210132609}
	\PP \AA \PP^{-1} = \diag \big( \lambda_1, \ldots, \lambda_n \big)
\fullstop\vspace{-5pt}
}
Furthermore, the transformation $\PP$ is the uniform Borel resummation of its asymptotic power series $\hat{\PP}$ in the direction $\theta$ that bisects the arc $A$: for all $(x,\hbar) \in X_0 \times S_0$,
\vspace{-5pt}
\eqntag{\label{211210134228}
	\PP = \cal{S}_\theta \big[ \, \hat{\PP} \, \big]
\fullstop\vspace{-10pt}
}
In addition:
\begin{enumerate}
\item The eigenvalues $\lambda_i = \lambda_i (x, \hbar)$ of $\AA$ are holomorphic functions on $X_0 \times S_0$ that admit uniform Gevrey asymptotic expansions
\vspace{-5pt}
\eqntag{\label{211209195205}
	\lambda_i (x, \hbar) \simeq \hat{\lambda}_i (x, \hbar) = \sum_{k=0}^\infty \lambda_{i,k} (x) \hbar^k
\quad
\text{as $\hbar \to 0$ along $\bar{A}$, unif. $\forall x \in X_0$\fullstop{,}}\vspace{-5pt}
}
with $\lambda_{i,0} (x_0) = a_i$.
Moreover, each eigenvalue $\lambda_i$ is the uniform Borel resummation of its asymptotic series $\hat{\lambda}_i$ in the direction $\theta$: for all $(x, \hbar) \in X_0 \times S_0$,
\vspace{-10pt}
\eqntag{\label{211210134826}
	\lambda_i = \cal{S}_\theta \big[ \, \hat{\lambda}_i \, \big]
\fullstop\vspace{-5pt}
}
\item Given an eigenbasis $v_1, \ldots, v_n \in \Complex^n$ for $\AA_{00}$, there is a unique eigenbasis $e_1, \ldots, e_n$ for $\AA$ consisting of holomorphic vectors $e_i = e_i (x, \hbar)$ on $X_0 \times S_0$ that admit uniform Gevrey asymptotic expansions
\vspace{-5pt}
\eqntag{\label{211210133236}
	e_i (x, \hbar) \simeq \hat{e}_i (x, \hbar)  = \sum_{k=0}^\infty e_{i,k} (x) \hbar^k
\quad
\text{as $\hbar \to 0$ along $\bar{A}$, unif. $\forall x \in X_0$\fullstop{,}}\vspace{-5pt}
}
with $e_{i,0} (x_0) = v_i$.
Moreover, each eigenvector $e_i$ is the uniform Borel resummation of its asymptotic series $\hat{e}_i$ in the direction $\theta$: for all $(x, \hbar) \in X_0 \times S_0$,
\vspace{-10pt}
\eqntag{\label{211210134828}
	e_i = \cal{S}_\theta \big[ \, \hat{e}_i \, \big]
\fullstop\vspace{-5pt}
}
\end{enumerate}
\end{thm}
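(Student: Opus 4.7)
The plan is to reduce the theorem to \autoref{211125100306} by encoding the diagonalisation as an implicit equation. I would take as unknown $z = (Q, \mu) \in \Complex^{n \times n} \times \Complex^n$, where $Q$ will turn out to be the matrix with the eigenvectors as columns and $\mu = (\mu_1, \ldots, \mu_n)$ the candidate eigenvalues. Setting $Q_{00} \coleq \PP_{00}^{-1}$ (or, for the eigenvector part, $Q_{00}$ the matrix with columns $v_1, \ldots, v_n$), I would define
\[
\FF(x, \hbar, Q, \mu)
	\coleq
	\Bigl( \AA(x,\hbar) Q - Q \diag(\mu), \; \diag(Q_{00}^{-1} Q - I) \Bigr).
\]
The first block is the eigenvalue equation $\AA Q = Q \diag(\mu)$; the second block supplies $n$ scalar normalisations that remove the scaling freedom in the columns of $Q$. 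This $\FF$ is holomorphic in all of its arguments, and its $(x,\hbar)$-dependence inherits the uniform Gevrey expansion of $\AA$.

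Next I would verify the hypotheses of \autoref{211125100306} at the base point $(x_0, 0, Q_{00}, (a_1, \ldots, a_n))$: one has $\FF_0 = 0$ by the choice of $Q_{00}$. To check that $\del_z \FF_0$ is invertible, I would linearise in $(\delta Q, \delta \mu)$ and substitute $\delta Q = Q_{00} R$. Using $\AA_{00} Q_{00} = Q_{00} \diag(a_i)$, the first block becomes $Q_{00}\bigl( [\diag(a_i), R] - \diag(\delta\mu) \bigr)$ and the second block becomes $\diag(R)$. Since the commutator acts entrywise as $R_{jk} \mapsto (a_j - a_k) R_{jk}$, it vanishes on diagonal matrices and is invertible on off-diagonal ones precisely because the $a_i$ are pairwise distinct. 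One then reads off $\delta\mu$ from the diagonal of the first block, the off-diagonal entries of $R$ from its off-diagonal part, and the diagonal of $R$ from the second block.

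Invoking \autoref{211125100306} will then yield a unique holomorphic $(Q, \lambda)$ on some $X_0 \times S_0$ with uniform Gevrey asymptotics and equal to the Borel resummation of its formal series in direction $\theta$. Since $\det Q(x_0, 0) = \det Q_{00} \neq 0$, after further shrinking $S_0$ (and possibly $X_0$) the matrix $Q$ will be invertible throughout, so I would set $\PP \coleq Q^{-1}$; matrix inversion preserves both Gevrey asymptotics and Borel summation wherever the matrix is invertible, so $\PP$ will admit the asserted uniform Gevrey expansion and satisfy $\PP = \cal{S}_\theta\big[ \hat\PP \big]$. By construction, $\PP \AA \PP^{-1} = \diag(\lambda_i)$ and $\PP_0(x_0) = \PP_{00}$, and the eigenvalues $\lambda_i$ come equipped with the required Gevrey and Borel-summability properties. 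For assertion (ii), I would simply rerun the same argument with $Q_{00} \coleq (v_1 \mid \cdots \mid v_n)$ and read off the eigenvectors $e_i$ as the columns of the resulting $Q$.

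The main obstacle I expect is the choice of normalisation. The leading-order commutator $[\diag(a_i), \cdot]$ has an $n$-dimensional kernel, namely the diagonal matrices, reflecting the intrinsic scaling freedom in eigenvectors; without $n$ extra scalar equations the Jacobian would be singular. The block $\diag(Q_{00}^{-1} Q - I)$ is the canonical choice that pins down precisely this kernel while vanishing at the base point, and distinctness of the leading eigenvalues is exactly what makes the remaining commutator part invertible. The other ingredients — holomorphy and Gevrey inheritance of $\FF$, invertibility of $Q$ after shrinking, and commutation of $\cal{S}_\theta$ with matrix inversion — should be routine.
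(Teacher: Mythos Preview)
Your proposal is correct and gives a valid proof, but it follows a genuinely different route from the paper. The paper derives \autoref{211209190924} as the special case of the Jordan block decomposition \autoref{211210093539}, whose proof proceeds by iterated $2\times 2$ block splitting in the style of Sibuya--Wasow: after diagonalising the leading order, one searches for a transformation of the shape $\begin{psmallmatrix} I & \SS \\ \TT & I \end{psmallmatrix}$, which reduces the problem to uncoupled quadratic matrix equations for the off-diagonal blocks $\SS, \TT$; these are already in the standard form $w = \hbar \GG(x,\hbar,w)$ of \autoref{211124143506}, and the leading-order Sylvester equation $\SS_0 \Lambda''_0 - \Lambda'_0 \SS_0 = 0$ forces $\SS_0 = 0$ precisely because the two eigenvalue groups are disjoint. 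The eigenvalues are then read off from $\Lambda' = \AA_{11} + \SS \AA_{21}$, or alternatively (\autoref{211214154402}) obtained by applying \autoref{211217121003} to the characteristic polynomial.

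Your approach instead packages everything---eigenvectors and eigenvalues together---into a single application of \autoref{211125100306}, using the normalisation $\diag(Q_{00}^{-1}Q - I) = 0$ to kill the scaling kernel of the commutator. This is more direct for the distinct-eigenvalue case and avoids the iteration entirely. What the paper's block-splitting buys is that it generalises without change to the Jordan setting of \autoref{211210093539}, where eigenvalues have multiplicity and your Jacobian computation would break down (the commutator $[\diag(a_i),\cdot]$ is no longer invertible on the off-diagonal part). It also lands directly in the standard form of \autoref{211124143506}, bypassing the reduction in \autoref{211208172717}. Conversely, your route makes the role of the main theorem more transparent and delivers the eigenvalues and eigenvectors in one stroke rather than extracting them afterwards.
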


Such results are useful in the exact WKB analysis of singularly perturbed meromorphic differential systems (and more generally singularly perturbed meromorphic connections on Riemann surfaces).
There, the role of the matrix $\AA (x, \hbar)$ is played by the principal part of the differential system at a pole.
For example, see \cite{nikolaev2019triangularisation} for the analysis of rank-two systems near a logarithmic pole.

\paragraph{Scalar polynomial case.}
Particularly notable for its simplicity and utility is the special case of \autoref{211125100306} where $\NN = 1$ and $\FF$ is a polynomial in the single variable $z$.
We restate it under these assumptions for ease of reference.

\begin{cor}{211217121003}
Fix a domain $X \subset \Complex_x^d$.
Let $S \subset \Complex_\hbar$ be a sectorial domain at the origin and opening arc $A$ with opening angle $|A| = \pi$.
Consider a polynomial
\eqntag{
	\FF = a_0 + a_1 z + \ldots + a_n z^n
}
whose coefficients $a_0, \ldots, a_m$ are holomorphic functions of $(x, \hbar) \in X \times S$ which admit uniform Gevrey asymptotic expansions
\eqntag{\label{211217121129}
	a_i (x, \hbar) 
		\simeq \hat{a}_i (x, \hbar)
		= \sum_{k=0}^\infty a_{i,k} (x) \hbar^k
\quad
\text{as $\hbar \to 0$ along $\bar{A}$, unif.$\forall x \in X$}
\fullstop}
Suppose that the leading-order discriminant
\eqn{
	\DD_0 
		= \DD_0 (x)
		\coleq \op{Disc}_x \big( \FF_0 \big)
		= \op{Disc}_x \big( a_{0,0} + a_{1,0} z + \ldots + a_{n,0} z^n \big)
}
is nonvanishing on $X$.
Let $z = f_0$ be a leading-order solution on $X$; i.e., a holomorphic function $f_0 (x)$ on $X$ such that $\FF_0 \big(x, f_0 (x)\big)$ for all $x \in X'$.
Then for any compactly contained subdomain $X_0 \subset X$, there is a sectorial subdomain $S_0 \subset S$ with the same opening $A$ such that the polynomial $\FF$ has a unique root $z = f(x, \hbar)$ which is a holomorphic function on $X_0 \times S_0$ and admits a uniform Gevrey asymptotic expansion \eqref{211208194052} with leading-order being the leading-order solution $f_0$.
Furthermore, $f$ is the uniform Borel resummation of $\hat{f}$ in the direction $\theta$ that bisects $A$.
\end{cor}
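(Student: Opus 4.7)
The plan is to reduce this corollary to the main \autoref{211125100306} via a local-to-global gluing argument along a compact cover. The corollary differs from the main theorem in two ways: it assumes a globally defined leading-order solution $f_0$ on all of $X$ rather than a single base point, and it asks for the conclusion on an arbitrary compactly contained subdomain $X_0 \subset X$.

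The first step is to verify that the Jacobian hypothesis of \autoref{211125100306} holds along the entire graph of $f_0$. Indeed, the nonvanishing discriminant $\DD_0(x) \neq 0$ is equivalent to the polynomial $\FF_0(x,\cdot)$ having $n$ distinct roots for every $x \in X$, so at any root, and in particular at $z = f_0(x)$, the partial derivative $\del \FF_0/\del z$ is nonzero. Therefore for every $x_* \in X$ the pair $(x_*, f_0(x_*))$ satisfies all the hypotheses of \autoref{211125100306}.

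Next, apply \autoref{211125100306} pointwise: for each $x_* \in X$ one obtains an open neighbourhood $U_{x_*} \subset X$ and a sectorial subdomain $S_{x_*} \subset S$ with the same opening arc $A$, on which there exists a unique holomorphic root $f^{(x_*)}$ of $\FF$ with leading-order $f_0\big|_{U_{x_*}}$, admitting a uniform Gevrey expansion and equal to $\cal{S}_\theta[\,\hat{f}\,]$. Since $\overline{X_0}$ is compact in $X$, extract a finite subcover $U_{x_1}, \ldots, U_{x_N}$ of $\overline{X_0}$, and set $S_0 \coleq \bigcap_{i=1}^N S_{x_i}$. Each $S_{x_i}$ contains some sector of radius $R_i > 0$ with arguments in $A$, so $S_0$ contains the sector of radius $\min_i R_i$ with arguments in $A$; hence $S_0$ is indeed sectorial with opening arc $A$.

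Finally, glue the local solutions. On any overlap $(U_{x_i} \cap U_{x_j}) \times S_0$, both $f^{(x_i)}$ and $f^{(x_j)}$ are holomorphic roots of $\FF$ whose Gevrey asymptotic expansions have the same leading term $f_0$; the uniqueness clause of \autoref{211125100306}, applied at any point of the overlap, forces them to coincide. This produces a single holomorphic function $f$ on $X_0 \times S_0$ satisfying $\FF\bigl(x,\hbar, f(x,\hbar)\bigr) = 0$. Uniformity of the Gevrey bounds and of the Borel resummation identity over $X_0$ follows by taking the maximum of the finitely many local constants, which is the only mildly technical step. The argument presents no genuine obstacle beyond the compactness used to pass from local existence to a global statement.
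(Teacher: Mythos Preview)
The paper does not supply a separate proof of this corollary; it is introduced simply as a restatement of \autoref{211125100306} in the special case $\NN = 1$ with $\FF$ polynomial in $z$, ``for ease of reference''. Your proposal is correct and in fact provides more detail than the paper does. You rightly observe that the main theorem is local---it produces a solution on \emph{some} neighbourhood of a chosen base point $x_0$---whereas the corollary demands the conclusion on an \emph{arbitrary} compactly contained subdomain $X_0 \Subset X$. The discriminant hypothesis is exactly what makes the Jacobian condition of \autoref{211125100306} hold at every point of the graph of $f_0$, and the compactness-and-gluing argument you sketch is the standard way to pass from local to semi-global; uniqueness (ultimately Nevanlinna's theorem) guarantees the local pieces match on overlaps since they share the same formal expansion $\hat{f}$.

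One small refinement on the ``mildly technical step'': rather than arguing abstractly that a finite intersection of sectorial subdomains with opening $A$ is again sectorial with opening $A$ (simple connectedness could in principle fail), it is cleaner to note that in the proof of \autoref{211124143506} each $S_{x_i}$ is actually constructed as a Borel disc bisected by $\theta$. The intersection of finitely many Borel discs in the same direction is just the one of smallest diameter, so $S_0$ is automatically a Borel disc with opening $A$.
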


\paragraph{}
The asymptotic conditions \eqref{211208193550} and \eqref{211208194052} mean that both formal power series $\hat{\FF}$ and $\hat{f}$ are Borel-summable series in the direction $\theta$, so \autoref{211125100306} can be rephrased as an Implicit Function Theorem in the setting of Borel-summable series.

\enlargethispage{10pt}
\begin{cor}[Implicit Function Theorem for Borel-Summable Series]{211209140652}
\mbox{}\\
Fix a point $(x_0, z_0) \in \Complex^d_x \times \Complex^\NN_z$, a domain $X \subset \Complex^d_x$ containing $x_0$, and a direction $\theta$.
Let
\vspace{-5pt}
\eqntag{\label{211209140918}
	\hat{\FF} = \hat{\FF} (x,\hbar,z) = \sum_{k=0}^\infty \FF_k (x,z) \hbar^k
\vspace{-5pt}
}
be a formal power series in $\hbar$ whose coefficients $\FF_k$ are holomorphic maps $X \times \Complex_z^\NN \to \Complex^\NN$ such that $\FF_0 (x_0, z_0) = 0$ and the Jacobian $\del \FF_0 \big/ \del z$ is invertible at $(x_0, z_0)$.
Suppose $\hat{\FF}$ is Borel-summable in the direction $\theta$ uniformly for all $x \in X$ and locally uniformly for all $z \in \Complex_z^\NN$.
Then there is a subdomain $X_0 \subset X$ such that the unique formal series
\vspace{-5pt}
\eqntag{\label{211209141249}
	\hat{f} = \hat{f} (x,\hbar) = \sum_{n=0}^\infty f_n (x) \hbar^n
\fullstop{,}\vspace{-5pt}
}
with holomorphic coefficients $f_n : X_0 \to \Complex^\NN$, which satisfies $f_0 (x_0) = z_0$ as well as $\hat{\FF} \big(x,\hbar, \hat{f} (x,\hbar) \big) = 0$, is Borel-summable in the direction $\theta$ uniformly for all $x \in X_0$.
\end{cor}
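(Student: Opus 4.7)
The plan is to reduce this statement to \autoref{211125100306} by passing from the Borel-summable series $\hat{\FF}$ to its actual Borel sum, applying the main theorem, and then reinterpreting the conclusion in the language of formal series.

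First I would unpack the Borel-summability hypothesis on $\hat{\FF}$: by definition of uniform (resp.\ locally uniform) Borel summability in the direction $\theta$, it supplies a sectorial domain $S \subset \Complex_\hbar$ at the origin with opening arc $A$ of angle $\pi$ bisected by $\theta$, together with a holomorphic map $\FF \coleq \cal{S}_\theta \big[ \hat{\FF} \big] : X \times S \times \Complex_z^\NN \to \Complex^\NN$ admitting $\hat{\FF}$ as a Gevrey asymptotic expansion in $\hbar$ along $\bar{A}$ uniformly in $x \in X$ and locally uniformly in $z$. Its leading coefficient is the prescribed $\FF_0(x,z)$, so $\FF_0(x_0, z_0) = 0$ and $\del \FF_0 / \del z$ is invertible at $(x_0, z_0)$; hence all hypotheses of \autoref{211125100306} are verified for $\FF$.

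Applying that theorem produces subdomains $X_0 \subset X$ and $S_0 \subset S$ with the same opening $A$, and a unique holomorphic $f : X_0 \times S_0 \to \Complex^\NN$ solving $\FF(x,\hbar,f) = 0$ with $f_0(x_0) = z_0$, admitting a uniform Gevrey expansion $f \simeq \sum_{n \geq 0} f_n(x)\hbar^n$ and realised as the Borel resummation $f = \cal{S}_\theta \big[ \sum_{n \geq 0} f_n(x)\hbar^n \big]$ uniformly for $x \in X_0$. By definition, this is precisely the statement that the formal series $\sum_{n \geq 0} f_n(x)\hbar^n$ is Borel-summable in the direction $\theta$ uniformly for $x \in X_0$.

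To conclude, I would identify this series with the $\hat{f}$ of the corollary, i.e.\ verify that $\hat{f}$ exists and is the unique formal solution of $\hat{\FF}(x,\hbar,\hat{f}) = 0$ with $f_0(x_0) = z_0$. The coefficient $f_0$ is pinned down on a neighbourhood of $x_0$ by the classical analytic implicit function theorem applied to $\FF_0(x, f_0(x)) = 0$ (shrinking $X_0$ so that $\del \FF_0/\del z$ remains invertible along the graph of $f_0$); equating $\hbar^n$-coefficients of $\hat{\FF}(x,\hbar,\hat{f}) = 0$ then gives a triangular recursion for the higher $f_n$ whose solvability operator is $\del \FF_0/\del z$ evaluated at $(x, f_0(x))$, so each $f_n$ is uniquely determined as a holomorphic function on $X_0$. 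I do not anticipate any substantive obstacle: all the analytic content is absorbed by the main theorem, and this corollary merely rephrases its conclusion in the Borel-summable-series language.
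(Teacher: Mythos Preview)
Your proposal is correct and follows exactly the route the paper intends: the corollary is presented there as a direct rephrasing of \autoref{211125100306}, using the equivalence (via Nevanlinna's theorem, recalled in the appendix) between uniform Borel summability of a series in direction $\theta$ and the existence of a holomorphic function with Gevrey asymptotics along the closed arc $\bar{A}$ of opening $\pi$ bisected by $\theta$. Your identification of $\hat{f}$ with the unique formal solution is handled in the paper by \autoref{211209161918}, so all pieces match.
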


After the initial release of this manuscript on the arXiv, we were alerted that a special case of this corollary (with $d = 0, \NN = 1$) was proved earlier by Kamimoto and Koike in \cite[Appendix A]{kamimoto2011borel} using a slightly different strategy.

\paragraph{Remarks and discussion.}
Our construction of the implicit function $f$ employs relatively basic and classical techniques from complex analysis which form the basis for the more modern and sophisticated theory of resurgent asymptotic analysis à la Écalle \cite{zbMATH03971144}; see also for instance \cite{MR2474083,sauzin2014introduction,MR3495546}.
Namely, we use the \textit{Borel-Laplace method}, also known as the theory of \textit{Borel-Laplace summability}.
We stress that the Borel-Laplace method ``is nothing other than the theory of Laplace transforms, written in slightly different variables'', echoing the words of Alan Sokal \cite{MR558468}.
As such, we have tried to keep our presentation very hands-on and self-contained, so the knowledge of basic complex analysis should be sufficient to follow.

We emphasise that the asymptotic condition \eqref{211208193550} on the holomorphic map $\FF$ is required to hold over the \textit{closed} arc $\bar{A} = [\theta - \tfrac{\pi}{2}, \theta + \tfrac{\pi}{2}]$, which is stronger than ordinary Gevrey asymptotics along an open arc $A$ (see \autoref{211215123326} or \cite[Appendix A.5 and A.16]{MY2008.06492} for a more detailed discussion).
This type of condition is exactly adapted to the Borel-Laplace method, see \autoref{211215123550}.
Similar methods are also used in the construction of exact WKB solutions for singularly perturbed ODEs such as the Schrödinger equation \cite{MY210623112236}.

What we call \textit{Gevrey asymptotics} is often called \textit{$1$-Gevrey asymptotics}.
It is part of an entire hierarchy of asymptotic regularity classes \cite{MR542737,MR579749}; see also \cite[\S1.2]{MR3495546}.
However, arguments about other Gevrey classes can usually be reduced to arguments about $1$-Gevrey asymptotics via a simple fractional transformation in the $\hbar$-space.
Therefore, we believe it is not difficult to extend our results to all other Gevrey asymptotic classes.
We leave this as a natural open problem.

\paragraph{Structure of the Paper.}
The proof of \autoref{211125100306} makes up all of \autoref{211125162211}.
Then in \autoref{211209190601}, we prove \autoref{211209190924} as well as its generalisation to Jordan blocks.
For pedagogical reasons, in \autoref{211125162208} we present the entire proof in the scalar case $\NN = 1$.
Although the contents of \autoref{211125162211} are strictly more general than those of \autoref{211125162208}, the two sections have been written in an entirely independent manner without any reference to each other.
We recommend the reader to begin with \autoref{211125162208} because it contains more or less all the essential ideas in the proof of the general vectorial case but without the added complication of having to keep track of many extra indices.

\enlargethispage{120pt}
\paragraph{Notation and conventions.}
\label{211214170249}
Our notation, conventions, and definitions from Gevrey asymptotics and Borel-Laplace theory are consistent with those given in Appendices A and B in \cite{MY2008.06492}.
A brief summary can be found in \autoref{211215112252}.

Throughout, we fix integers $d \geq 0$ and $\NN \geq 1$, and we write the vector components as $x = (x_1, \ldots, x_d)$, $z  = (z_1, \ldots, z_\NN)$, $\FF = (\FF^1, \ldots, \FF^\NN)$, $f = (f^1, \ldots, f^\NN)$.
The symbol $\Natural$ stands for nonnegative integers $0, 1, 2, \ldots$.
We use boldface letters to denote index vectors; i.e., $\mm \coleq (m_1, \ldots, m_\NN) \in \Natural^\NN$, etc., and we put $|\mm| \coleq m_1 + \cdots + m_\NN$.
Unless otherwise indicated, all sums over unbolded indices $n, m, \ldots$ are taken to run over $\Natural$, and all sums over boldface letters $\nn, \mm, \ldots$ are taken to run over $\Natural^\NN$.
Throughout this paper, we often suppress the explicit dependance on $x$ in the notation in the interest of brevity.

\paragraph*{Acknowledgements.}
The author wishes to thank Kohei Iwaki, Omar Kidwai, and Shinji Sasaki for helpful discussions.
The author also thanks Beatriz Navarro Lameda for her help in dealing with the many indices in some calculations.
This work was supported by the EPSRC Programme Grant \textit{Enhancing RNG}.

\section[Proof of \autoref*{211125100306}]{Proof of \autoref{211125100306}}
\label{211125162211}
\enlargethispage{5pt}

This section is dedicated to proving our main result, the Gevrey Asymptotic Implicit Function Theorem (\autoref{211125100306}).
The overall strategy of the proof is as follows.
First, we construct a formal solution $z = \hat{f}$ of the equation $\FF (x, \hbar, z) = 0$ using the ordinary Holomorphic Implicit Function Theorem at the leading-order in $\hbar$ and then using a recursion to determine all higher-order corrections.
We then want to apply the Borel resummation to $\hat{f}$ to get $f$.
To do so, we first make a convenient change of variables $z \mapsto w$ in order to put our equation into a certain standard form which is more amenable to the Borel transform.
Applying the Borel transform, we obtain a first-order ordinary differential equation for $\sigma = \Borel [ w ]$, albeit nonlinear and with convolution.
Nevertheless, this ODE is easy to convert into an integral equation, which we then proceed to solve using the method of successive approximations.
To show that this sequence of approximations converges to an actual solution $\sigma$, we give an estimate on the terms of this sequence by employing in an interesting way the ordinary Holomorphic Implicit Function Theorem.
This estimate also allows us to conclude that the Laplace transform $g = \Laplace [\sigma]$ of the obtained solution $\sigma$ exists and defines a holomorphic solution of our equation in standard form.
Undoing the change of variables $z \mapsto w$ sends $g$ to the desired solution $f$.

The proof is split into several intermediate lemmas.
All this work is finally put together on page \pageref{211215172900}.

\subsection{Formal Perturbation Theory}

The starting point is the following classical result whose proof is supplied below for completeness and in order to introduce some helpful notation.

\begin{prop}[{Formal Implicit Function Theorem}]{211209161918}
\mbox{}\\
Fix a domain $X \subset \Complex^d_x$ and a point $(x_0, z_0) \in X \times \Complex^\NN_z$.
Let
\vspace{-5pt}
\eqntag{\label{211127144949}
	\hat{\FF} = \hat{\FF} (x,\hbar,z) = \sum_{k=0}^\infty \FF_k (x,z) \hbar^k
\vspace{-5pt}
}
be a formal power series in $\hbar$ whose coefficients $\FF_k$ are holomorphic maps $X \times \Complex^\NN_z \to \Complex^\NN$ such that $\FF_0 (x_0, z_0) = 0$ and the Jacobian matrix $\del \FF_0 \big/ \del z$ is invertible at $(x_0, z_0)$.

Then there is subdomain $X_0 \subset X$ containing $x_0$ such that there is a unique formal power series
\vspace{-10pt}
\eqntag{\label{211206173102}
	\hat{f} = \hat{f} (x,\hbar) = \sum_{n=0}^\infty f_n (x) \hbar^n
}
whose coefficients $f_n$ are holomorphic maps $X_0 \to \Complex^\NN$, satisfying
\eqntag{
	f_0 (x_0) = z_0
\qqtext{and}
	\hat{\FF} \big( x, \hbar, \hat{f} (x,\hbar) \big) = 0
\qquad
	\text{$\forall x \in X_0$\fullstop}
}
In other words, the equation $\hat{\FF} (x, \hbar, z) = 0$ has a unique solution $z = \hat{f}$ defined near the point $x_0$ such that $f_0 (x_0) = z_0$.
In fact, all the higher-order coefficients $f_k$ are uniquely determined by $f_0$.

In particular, if $S \subset \Complex_\hbar$ is a sectorial domain at the origin, and $\FF$ is a holomorphic map $X \times S \times \Complex_z^\NN \to \Complex^\NN$ which admits the power series $\hat{\FF}$ as an asymptotic expansion as $\hbar \to 0$ in $S$, uniformly in $x$ and locally uniformly in $z$, then the equation $\FF (x,\hbar,z) = 0$ has a unique formal power series solution $z = \hat{f}$ near $x_0$ such that $f_0 (x_0) = z_0$.
\end{prop}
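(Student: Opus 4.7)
The plan is to build the coefficients $f_0, f_1, f_2, \ldots$ one at a time by a recursion, the base case supplied by the classical Holomorphic Implicit Function Theorem and each higher order forced by the invertibility of $\del \FF_0 / \del z$. First I would apply the ordinary Holomorphic Implicit Function Theorem to the leading-order equation $\FF_0(x,z) = 0$ at the point $(x_0, z_0)$: since $\FF_0(x_0, z_0) = 0$ and $\del \FF_0/\del z$ is invertible there, this yields a subdomain $X_0 \subset X$ containing $x_0$ together with a unique holomorphic map $f_0 : X_0 \to \Complex^\NN$ with $f_0(x_0) = z_0$ and $\FF_0(x, f_0(x)) \equiv 0$. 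Shrinking $X_0$ if necessary, the Jacobian $J_0(x) \coleq (\del \FF_0/\del z)(x, f_0(x))$ remains invertible throughout $X_0$.

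Next I would plug the ansatz $\hat{f} = f_0(x) + g(x,\hbar)$ with $g \coleq \sum_{j \geq 1} f_j(x)\hbar^j$ into $\hat{\FF}$ and Taylor-expand each $\FF_k$ about $z = f_0(x)$:
\[
\FF_k\big(x,\, f_0(x) + g(x,\hbar)\big) = \sum_\mm \tfrac{1}{\mm!} \tfrac{\del^{|\mm|} \FF_k}{\del z^\mm}\big(x, f_0(x)\big)\, g(x,\hbar)^\mm.
\]
Because each component of $g$ starts at order $\hbar^1$, the monomial $g^\mm$ is a formal series in $\hbar$ starting at order $\hbar^{|\mm|}$. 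Consequently $\hat{\FF}(x,\hbar, \hat{f})$ becomes a formal power series in $\hbar$ whose $\hbar^n$-coefficient is a polynomial expression in the finitely many data $f_0(x), \ldots, f_n(x)$ and in the partial derivatives of $\FF_0, \ldots, \FF_n$ evaluated at $(x, f_0(x))$.

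Extracting coefficients then solves the equation order by order. The $\hbar^0$-coefficient is $\FF_0(x, f_0(x))$, which vanishes by construction of $f_0$. For each $n \geq 1$, isolating the unique contribution linear in $f_n$ --- namely the $k=0$, $|\mm|=1$ summand of the Taylor expansion --- gives
\[
J_0(x)\, f_n(x) + R_n(x) = 0,
\]
where $R_n$ is a universal polynomial expression in $f_0(x), \ldots, f_{n-1}(x)$ and in certain partial derivatives of $\FF_0, \ldots, \FF_n$ at $(x, f_0(x))$, hence a holomorphic map $X_0 \to \Complex^\NN$ already determined by the induction hypothesis. Since $J_0$ is invertible on $X_0$, one reads off $f_n(x) = -J_0(x)^{-1} R_n(x)$ uniquely as a holomorphic map, closing the induction. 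The ``in particular'' clause is then automatic: the assumption that $\FF$ admits $\hat{\FF}$ as an asymptotic expansion on $X \times S \times \Complex_z^\NN$ forces each coefficient $\FF_k$ to be holomorphic in $(x,z)$, so the first part of the proposition applies verbatim. No step in this argument presents a real obstacle; the only points requiring mild care are the clean extraction of the linear-in-$f_n$ term from the Taylor series and the preservation of invertibility of $J_0$ on the (possibly shrunken) domain $X_0$.
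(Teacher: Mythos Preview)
Your proposal is correct and follows essentially the same approach as the paper: apply the Holomorphic Implicit Function Theorem at order $\hbar^0$ to obtain $f_0$, shrink $X_0$ so the Jacobian $J_0(x)$ stays invertible, then observe that at each order $n\ge 1$ the equation has the form $J_0 f_n + (\text{terms in } f_0,\ldots,f_{n-1}) = 0$ and solve recursively. The only cosmetic difference is that you Taylor-expand each $\FF_k$ about $z=f_0(x)$, whereas the paper expands about $z=0$ and tracks the combinatorics via explicit multi-index notation $\bm{f}^\mm_\nn$; the paper's bookkeeping is heavier here because that notation is reused in later proofs, but for this proposition alone your presentation is equally valid.
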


\begin{proof}
The proof amounts to plugging the solution ansatz \eqref{211206173102} into the formal equation $\hat{\FF} (x,\hbar,z) = 0$ and solving order-by-order in $\hbar$.
First, let us note down a few formulas in order to proceed with the calculation.
See also \autoref{211214170249}.

\textsc{Step 0: Collect some formulas.}
Write the double power series expansion of each component $\hat{\FF}^i$ as
\eqntag{\label{211208134110}
	\hat{\FF}^i (x, \hbar, z) 
		= \sum_{k=0}^\infty \sum_{m=0}^\infty \sum_{|\mm| = m}
			\FF^i_{k\mm} (x) \hbar^k z^\mm
\fullstop{,}
}
where $\FF^i_{k\mm} z^\mm \coleq \FF^i_{k m_1 \cdots m_\NN} z_1^{m_1} \cdots z_\NN^{m_\NN}$.
In particular, the expansion of the leading-order part $\FF_0$ is
\eqntag{\label{211208145851}
	\FF^i_0 (x, z) = \sum_{m=0}^\infty \sum_{|\mm| = m} \FF^i_{0\mm} (x) z^\mm
\fullstop
}
For every $\mm \in \Natural^\NN$, we have $\frac{\del}{\del z_j} z^\mm = \frac{m_j}{z_j} z^\mm$, so the $(i,j)$-component of the Jacobian matrix $\del \FF_0 \big/ \del z$ can be written as
\eqntag{\label{211208145846}
	\left[ \frac{\del \FF_0}{\del z} \right]_{ij}
		= \frac{\del \FF^i_0}{\del z_j}
		= \sum_{m=0}^\infty \sum_{|\mm| = m} \FF_{0\mm}^i (x) \frac{\del}{\del z_j} z^\mm
		= \sum_{m=0}^\infty \sum_{|\mm| = m} \frac{m_j}{z_j} \FF_{0\mm}^i (x) z^\mm
\fullstop
}
Next, the $\mm$-th power $\hat{f}^\mm$ of the power series ansatz \eqref{211206173102} expands as follows:
\eqns{
	\left( \sum_{n=0}^\infty f_n \hbar^n \right)^{\!\! \mm}
		&= 	\left( \sum_{n_1=0}^\infty f^1_{n_1} \hbar^{n_1} \right)^{\!\! m_1} \!\!\!\!\!
			\cdots
			\left( \sum_{n_\NN=0}^\infty f^\NN_{n_\NN} \hbar^{n_\NN} \right)^{\!\! m_\NN}
\\
		&=	\left( 
				\sum_{n_1=0}^\infty
				\sum_{|\jj_1| = n_1}^{\jj_1 \in \Natural^{m_1}}
					f_{j_{1,1}}^1 \cdots f_{j_{1,m_1}}^1 \hbar^{n_1}
			\right)
			\cdots
			\left( 
				\sum_{n_\NN=0}^\infty
				\sum_{|\jj_\NN| = n_\NN}^{\jj_\NN \in \Natural^{m_\NN}}
					f_{j_{\NN,1}}^\NN \cdots f_{j_{\NN,m_\NN}}^\NN \hbar^{n_\NN}
			\right)
\\
		&= \sum_{n=0}^\infty \sum_{|\nn| = n}
				\left(\sum_{|\jj_1| = n_1}^{\jj_1 \in \Natural^{m_1}}
					f_{j_{1,1}}^1 \cdots f_{j_{1,m_1}}^1
				\right)
				\cdots
				\left(\sum_{|\jj_\NN| = n_\NN}^{\jj_\NN \in \Natural^{m_\NN}}
					f_{j_{\NN,1}}^\NN \cdots f_{j_{\NN,m_\NN}}^\NN
				\right)
				\hbar^n
}
In these formulas, we have denoted the components of each vector $\jj_i \in \Natural^{m_i}$ by $(j_{i,1}, \ldots, j_{i,m_i})$.
Let us introduce the following shorthand notation:
\eqntag{\label{211208150122}
	\bm{f}^\mm_\nn 
		\coleq
		\left(\sum_{|\jj_1| = n_1}^{\jj_1 \in \Natural^{m_1}}
			f_{j_{1,1}}^1 \cdots f_{j_{1,m_1}}^1
		\right)
		\cdots
		\left(\sum_{|\jj_\NN| = n_\NN}^{\jj_\NN \in \Natural^{m_\NN}}
			f_{j_{\NN,1}}^\NN \cdots f_{j_{\NN,m_\NN}}^\NN
		\right)
\fullstop
}
We note the following simple but useful identities: 
\eqntag{
	\bm{f}^{\bm{0}}_{\bm{0}} = 1\fullstop{;}
	\qquad
	\bm{f}^{\mm}_{\bm{0}} = f_0^\mm = (f^1_0)^{m_1} \cdots (f^\NN_0)^{m_\NN}\fullstop{;}
	\qquad
	\bm{f}^{\bm{0}}_\nn = 0 \text{ whenever $|\nn| > 0$\fullstop}
}
Using this notation, the formula for $\hat{f}^\mm$ can be written much more compactly:
\eqntag{\label{211208150118}
	\hat{f}^\mm 
		= \left( \sum_{n=0}^\infty f_n \hbar^n \right)^{\!\! \mm}
		= \sum_{n=0}^\infty \sum_{|\nn| = n} \bm{f}^\mm_\nn \hbar^n
\fullstop
}

\newpage
\textsc{Step 1: Expand order-by-order.}
Now, we plug the solution ansatz \eqref{211206173102} into the equation $\hat{\FF} (x,\hbar,z) = 0$.
Using \eqref{211208134110} and \eqref{211208150118}, we find:
\eqntag{\label{211208150414}
	\sum_{n=0}^\infty \sum_{m=0}^\infty \sum_{k=0}^n
	\sum_{|\nn| = n - k} \sum_{|\mm| = m}
			\FF^i_{k\mm} \bm{f}^\mm_\nn \hbar^{n}
	= 0
\qqquad
\text{(\:$i = 1, \ldots, \NN$\:)\fullstop}
}
We solve \eqref{211208150414} for the coefficients $f_n$ order-by-order in $\hbar$.

\textsc{Step 2: Leading-order part.}
First, at order $n = 0$, equation \eqref{211208150414} yields:
\eqntag{\label{211208152123}
		\sum_{m=0}^\infty \sum_{|\mm| = m} \FF_{0\mm}^i (x) \bm{f}^\mm_{\bm{0}}
		= 0
\qqquad
\text{(\:$i = 1, \ldots, \NN$\:)\fullstop}
}
Comparing with \eqref{211208145851}, these equations are simply the components of the equation $\FF_0 (x, f_0) = 0$.
By the Holomorphic Implicit Function Theorem, there is a domain $X_0 \subset X$ containing $x_0$ such that there is a unique holomorphic map $f_0 : X_0 \to \Complex^\NN$ that satisfies $\FF_0 \big(x, f_0(x)\big) = 0$ and $f_0 (x_0) = z_0$.
In fact, the domain $X_0$ can be chosen so small that the Jacobian $\del \FF_0 \big/ \del z$ remains invertible at the point $\big(x, f_0(x)\big)$ for all $x \in X_0$.
Thus, we can define a holomorphic invertible $\NN\!\!\times\!\!\NN$-matrix $\JJ_0$ on $X_0$ by
\eqntag{\label{211208151359}
	\JJ_0 (x) \coleq \evat{\frac{\del \FF_0}{\del z}}{\big(x, f_0(x)\big)}
}
The $(i,j)$-component of $\JJ_0$ is:
\eqntag{\label{211208150755}
	[\JJ_0]_{ij}
		= \evat{\frac{\del \FF^i_0}{\del z_j}}{\big(x, f_0(x)\big)} \!\!\!\!
		= \sum_{m=0}^\infty \sum_{|\mm| = m} \frac{m_j}{f_0^j} \FF_{0\mm}^i \bm{f}^\mm_{\bm{0}}
\fullstop
}

\textsc{Step 3: Next-to-leading-order part.}
For clarity, let us also examine equation \eqref{211208150414} at order $n = 1$.
First, let us note that if $|\nn| = 1$, then $\nn = (0, \ldots, 1, \ldots, 0)$ with the only $1$ in some position $i$, in which case the notation \eqref{211208150122} reduces to:
\eqntag{\label{211208154854}
	\bm{f}^\mm_\nn
		= (f^1_0)^{m_1} \cdots \left(m_j f_1^j \right) (f_0^j)^{m_j-1} \cdots (f^\NN_0)^{m_\NN}
		= \frac{m_j}{f_0^j} \bm{f}_{\bm{0}}^\mm f_1^j
\fullstop
}
Then at order $n = 1$, equation \eqref{211208150414} comprises two main summands corresponding to $k = 0$ and $k = 1$, which simplify using identities \eqref{211208150755} and \eqref{211208154854}: 
\eqnstag{\nonumber
	\BLUE{\sum_{m=0}^\infty \sum_{|\mm| = m}
		\sum_{|\nn| = 1} \FF^i_{0\mm} \bm{f}^\mm_\nn}
	+ \sum_{m=0}^\infty \sum_{|\mm| = m}
		\FF^i_{1\mm} \bm{f}^\mm_{\bm{0}}
	&=0
\fullstop{,}
\\\nonumber
	\BLUE{\sum_{j=1}^\NN 
	\sum_{m=0}^\infty \sum_{|\mm| = m}
		\frac{m_j}{f_0^j} \FF^i_{0\mm} \bm{f}_{\bm{0}}^\mm f_1^j}
	+ \sum_{m=0}^\infty \sum_{|\mm| = m}
		\FF^i_{1\mm} \bm{f}^\mm_{\bm{0}}
	&=0
\fullstop{,}
\\\label{211208162829}
	\BLUE{\sum_{j=1}^\NN [\JJ_0]_{ij} f_1^j}
	+ \sum_{m=0}^\infty \sum_{|\mm| = m}
		\FF^i_{1\mm} \bm{f}^\mm_{\bm{0}}
	&=0
\fullstop
}
Observe that the \BLUE{blue} term is nothing but the $i$-th component of the vector $\JJ_0 f_1$.
Since $\JJ_0$ is an invertible matrix, multiplying the system of $\NN$ equations \eqref{211208162829} on the left by $\JJ^{-1}_0$, we solve uniquely for a holomorphic vector $f_1$ on $X_0$.

\newpage
\textsc{Step 4: Inductive step.}
Suppose now that $n \geq 1$ and we have already solved equation \eqref{211208150414} for holomorphic vectors $f_0, f_1, \ldots, f_{n-1}$ on $X_0$.
Similar to \eqref{211208154854}, we have that if $\nn = (0, \ldots, n, \ldots, 0)$ with the only nonzero entry in some position $j$, then
\eqntag{\label{211216093444}
	\bm{f}^\mm_\nn
		= (f^1_0)^{m_1} \cdots \left(m_j f_n^j \right) (f_0^j)^{m_j-1} \cdots (f^\NN_0)^{m_\NN}
		= \frac{m_j}{f_0^j} \bm{f}_{\bm{0}}^\mm f_n^j
\fullstop
}
Then at order $n$ in $\hbar$, we separate out the $k = 0$ summand and simplify using the identities \eqref{211208150755} and \eqref{211216093444}:
\vspace{-15pt}
\eqns{
	\sum_{m=0}^\infty \sum_{k=0}^n
	\sum_{|\nn| = n - k} \sum_{|\mm| = m}
			\FF^i_{k\mm} \bm{f}^\mm_\nn
	&=0
\fullstop{,}
\\
	\sum_{m=0}^\infty
	\left(
	\sum_{|\nn| = n} \sum_{|\mm| = m}
			\FF^i_{0\mm} \bm{f}^\mm_\nn
	+
	\sum_{k=1}^n \sum_{|\nn| = n - k} \sum_{|\mm| = m}
			\FF^i_{k\mm} \bm{f}^\mm_\nn
	\right)
	&=0
\fullstop{,}
\\
	\BLUE{\sum_{j=1}^\NN \sum_{m=0}^\infty
	\sum_{|\mm| = m}
			\frac{m_j}{f_0^j} \FF^i_{0\mm} \bm{f}_{\bm{0}}^\mm f_n^j}
		\hspace{0.55\textwidth}&
\\[-8pt]
	+ \sum_{m=0}^\infty 
	\left(
		\sum_{|\nn| = n}^{n_1, \ldots, n_\NN \neq n} \!\!
		\sum_{|\mm| = m}
				\FF^i_{0\mm} \bm{f}^\mm_\nn
		+
		\sum_{k=1}^n \sum_{|\nn| = n - k} \sum_{|\mm| = m}
				\FF^i_{k\mm} \bm{f}^\mm_\nn
	\right)
	&=0
\fullstop{,}
\\[5pt]
	\BLUE{\sum_{j=1}^\NN [\JJ_0]_{ij} f_n^j}
	+
	\sum_{m=0}^\infty 
	\left(
		\sum_{|\nn| = n}^{n_1, \ldots, n_\NN \neq n} \!\!
		\sum_{|\mm| = m}
				\FF^i_{0\mm} \bm{f}^\mm_\nn
		+
		\sum_{k=1}^n \sum_{|\nn| = n - k} \sum_{|\mm| = m}
				\FF^i_{k\mm} \bm{f}^\mm_\nn
	\right)
	&=0
\fullstop
}
The term in \BLUE{blue} is nothing but the $i$-th component of the vector $\JJ_0 f_n$.
Observe that the remaining part of this expression involves only the already-known components of the lower-order vectors $f_0, \ldots, f_{n-1}$.
Therefore, since $\JJ_0$ is invertible, multiplying this system of $\NN$ equations on the left by $\JJ_0^{-1}$, we can solve uniquely for the holomorphic vector $f_n$ on $X_0$. 
\end{proof}

\subsection{Transformation to the Standard Form}
\enlargethispage{10pt}

Next, we make a convenient change of variables in order to bring the given equation $\FF (x, \hbar, z) = 0$ to a standard form that is more easily handled using the Borel-Laplace method.
This transformation and the standard form are fully determined by the leading-order solution $f_0$ of the equation $\FF_0 (x,z) = 0$ and can always be achieved under our hypotheses.
Namely, we have the following statement.

\begin{lem}{211208172717}
Suppose $\FF$ is a holomorphic map $X \times S \times \Complex_z^\NN \to \Complex^\NN$ satisfying the hypotheses of \autoref{211209161918}.
Let $f_0$ and $f_1$ be the leading- and the next-to-leading-order parts of the formal solution $\hat{f}$ defined on $X_0 \subset X$.
Then the change of the unknown variable $z \mapsto w$ given by
\eqntag{\label{211208180449}
	z = f_0 + \hbar (f_1 + w)
}
transforms the equation $\FF (x, \hbar, z) = 0$ into an equation in $w$ of the form
\eqntag{\label{211208180500}
	w = \hbar \GG (x, \hbar, w)
\fullstop{,}
}
where $\GG$ is a holomorphic map $X_0 \times S \times \Complex_w^\NN \to \Complex^\NN$ uniquely determined by $f_0$ and $\FF$.
Furthermore, if $\FF$ admits a Gevrey asymptotic expansion as $\hbar \to 0$ along $\bar{A}$ uniformly for all $x \in X$ and locally uniformly for all $z \in \Complex^\NN_z$ and the domain $X_0$ is chosen so small that all the eigenvalues of $\JJ_0$ (where $\JJ_0$ is the invertible holomorphic matrix on $X_0$ given by \eqref{211208151359}) are bounded from below on $X_0$, then $\GG$ also admits a Gevrey asymptotic expansion as $\hbar \to 0$ along $\bar{A}$ uniformly for all $x \in X_0$ and locally uniformly for all $z \in \Complex^\NN_z$.
Specifically, $\GG$ is defined by
\eqntag{\label{211209144657}
	\GG (x, \hbar, w) \coleq \hbar^{-1} \Big( w - \hbar^{-1}\JJ_0^{-1} (x) \FF \big(x, \hbar, f_0 (x) + \hbar f_1 (x) + \hbar w\big)\Big)
\fullstop
}
\end{lem}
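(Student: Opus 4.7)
The statement splits into three tasks: (i) verify that under the substitution $z = f_0 + \hbar(f_1 + w)$ the equation $\FF(x,\hbar,z)=0$ is equivalent to $w = \hbar\GG(x,\hbar,w)$ with $\GG$ given by \eqref{211209144657}; (ii) show that this $\GG$ is genuinely holomorphic on $X_0 \times S \times \Complex^\NN_w$, i.e. the apparent pole of order two at $\hbar=0$ in \eqref{211209144657} is removable; (iii) transfer the uniform Gevrey asymptotic expansion of $\FF$ to one for $\GG$.

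Task (i) is a formal rearrangement: starting from $\FF(x,\hbar,f_0+\hbar f_1+\hbar w)=0$, left-multiply by $\hbar^{-1}\JJ_0^{-1}(x)$ and isolate $w$ on the left-hand side, which by definition of $\GG$ is exactly $w = \hbar\GG(x,\hbar,w)$. The nontrivial input is task (ii). The plan is to introduce the auxiliary holomorphic map
\[
\HH(x,\hbar,w) \coleq \FF\bigl(x,\hbar,f_0(x)+\hbar f_1(x)+\hbar w\bigr)
\]
and argue by two successive vanishing-at-$\hbar=0$ arguments. First, $\HH(x,0,w) = \FF_0(x,f_0(x)) = 0$ by the leading-order equation solved in Step~2 of the proof of \autoref{211209161918}. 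By the holomorphic Weierstrass/division lemma in the variable $\hbar$ there exists a holomorphic map $\HH_1$ with $\HH = \hbar\HH_1$. Second, one computes
\[
\HH_1(x,0,w) = \evat{\tfrac{\del \HH}{\del\hbar}}{\hbar=0} = \FF_1(x,f_0(x)) + \JJ_0(x)\bigl(f_1(x)+w\bigr),
\]
so
\[
w - \JJ_0^{-1}(x)\HH_1(x,0,w) = -\JJ_0^{-1}(x)\FF_1(x,f_0(x)) - f_1(x),
\]
and this vanishes identically because the next-to-leading-order equation \eqref{211208162829} gives precisely $\JJ_0 f_1 + \FF_1(\cdot,f_0) = 0$. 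A second application of division by $\hbar$ then produces a holomorphic $\GG$ with $\hbar\GG = w - \JJ_0^{-1}\HH_1 = w - \hbar^{-1}\JJ_0^{-1}\HH$, which is exactly \eqref{211209144657}.

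For task (iii), since $f_0, f_1$ are holomorphic in $x$ alone and $\FF$ is uniformly Gevrey in $\hbar$ along $\bar A$, the composite $\HH$ inherits a uniform Gevrey expansion in $\hbar$ on $X_0 \times \bar A$, locally uniformly in $w$. Dividing a Gevrey series whose first term vanishes by $\hbar$ yields a Gevrey series (of the same Gevrey order), and left-multiplying by the $\hbar$-independent holomorphic matrix $\JJ_0^{-1}(x)$ preserves uniformity, provided that $\JJ_0^{-1}$ stays bounded on $X_0$ — which is guaranteed by the eigenvalue bound in the hypothesis. Applying the division step twice (matching tasks (i)--(ii)) gives the desired uniform Gevrey expansion of $\GG$.

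The main obstacle is the second division by $\hbar$ in task (ii): one has to recognise that the definition of $f_1$ in \eqref{211208162829} is precisely what makes the numerator of \eqref{211209144657} vanish at $\hbar = 0$. This is the whole reason for absorbing both $f_0$ and $f_1$ into the change of variables \eqref{211208180449} rather than only $f_0$; it is what produces the factor of $\hbar$ on the right-hand side of the standard form \eqref{211208180500}, which will be essential for the Borel transform step that follows.
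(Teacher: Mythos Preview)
Your proof is correct and follows essentially the same approach as the paper: both verify that the first two terms of the $\hbar$-expansion of $\HH(x,\hbar,w)=\FF(x,\hbar,f_0+\hbar f_1+\hbar w)$ are $0$ and $\JJ_0 w + \JJ_0 f_1 + \FF_1(\cdot,f_0) = \JJ_0 w$, the latter cancellation coming from \eqref{211208162829}. Your chain-rule computation $\HH_1(x,0,w)=\partial_\hbar\HH\big|_{\hbar=0}=\FF_1(x,f_0)+\JJ_0(f_1+w)$ is cleaner than the paper's route, which carries out the same calculation by explicit multinomial expansion in the multi-indices $\mm$.

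One small point of framing: your task~(ii) speaks of ``removable pole'' and ``Weierstrass division'', which suggests holomorphy near $\hbar=0$. But $\FF$ is only defined on the sector $S$ (with $0\notin S$), so holomorphy of $\GG$ on $X_0\times S\times\Complex^\NN_w$ is automatic from the formula. The genuine content---as the paper phrases it---is that the \emph{asymptotic} expansion of $\GG$ has no negative powers of $\hbar$, which is what your two-step vanishing argument actually establishes. Your task~(iii) then correctly observes that shifting a Gevrey series by one power of $\hbar$ preserves the Gevrey bounds, and that boundedness of $\JJ_0^{-1}$ (guaranteed by the eigenvalue hypothesis) keeps everything uniform in $x$.
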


\enlargethispage{20pt}

\begin{proof}
The only thing to check is that the righthand side of \eqref{211209144657} has no negative powers in $\hbar$.
In particular, since each component of $\FF$ is an entire function in the variables $z_1, \ldots, z_\NN$, identity \eqref{211209144657} makes it obvious that $\GG$ admits a uniform Gevrey asymptotic expansion $\hat{\GG}$ as $\hbar \to 0$ along $\bar{A}$ whenever the eigenvalues of $\JJ_0$ are bounded from below and $\FF$ admits uniform Gevrey asymptotics.

Let us now verify that $\GG$ has no negative powers in $\hbar$.
Clearly, the leading-order part of $\FF \big(\hbar, f_0 + \hbar f_1 + \hbar w\big)$ is simply $\FF_0 \big(x, f_0 (x) \big)$ which is zero because $f_0$ is the leading-order solution.
Therefore, the righthand side of \eqref{211209144657} is at worst of order $\hbar^{-1}$.
We argue that the next-to-leading-order part of $\FF \big(\hbar, f_0 + \hbar f_1 + \hbar w\big)$ is equal to $\JJ_0 w$.
Evidently,
\eqntag{\label{211214180548}
	\Big[ \FF \big(\hbar, f_0 + \hbar f_1 + \hbar w\big) \Big]^{\OO (\hbar)}
	= \FF_1 (f_0) + \Big[ \FF_0 \big(f_0 + \hbar f_1 + \hbar w\big) \Big]^{\OO (\hbar)}
\fullstop
}
The $i$-th component of $\FF_1 (f_0)$ is easy to write down:
\eqntag{\label{211214180637}
	\FF_1^i (f_0) 
		= \sum_{m=0}^\infty \sum_{|\mm| = m}
			\FF^i_{1\mm} \bm{f}^\mm_{\bm{0}}
\fullstop
}
To expand the term $\big[ \FF_0 \big(f_0 + \hbar f_1 + \hbar w\big) \big]^{\OO (\hbar)}$, consider first the following calculation:
\eqns{
	&\phantom{=}~~
		\Big( f_0 + \hbar (f_1 + w) \Big)^\mm
\\	&= 
		\Big( f_0^1 + \hbar (f_1^1 + w_1) \Big)^{m_1}
		\cdots
		\Big( f_0^\NN + \hbar (f_1^\NN + w_\NN) \Big)^{m_\NN}
\\
	&=
		\left(
			\sum_{i_1 + j_1 = m_1} \!\!\!\!
			\binom{m_1}{i_1, j_1} 
			\big( f_0^1 \big)^{i_1} 
			\big(f_1^1 + w_1\big)^{j_1} \hbar^{j_1}
		\right)
		\cdots
		\left(
			\sum_{i_\NN + j_\NN = m_\NN} \!\!\!\!
			\binom{m_\NN}{i_\NN, j_\NN} 
			\big( f_0^\NN \big)^{i_\NN} 
			\big(f_1^\NN + w_\NN\big)^{j_\NN} \hbar^{j_\NN}
		\right)
\\
	&=
		\sum_{\substack{i_1 + j_1 = m_1 \\ \cdots \\ i_\NN + j_\NN = m_\NN}}^{\ii,\jj \in \Natural^\NN}
		\binom{m_1}{i_1, j_1} \cdots \binom{m_\NN}{i_\NN, j_\NN}
		f_0^\ii \big(f_1 + w\big)^{\jj}
		\hbar^{|\jj|}
\fullstop
}
We are only interested in the $|\jj|=1$ part of this sum.
This means $\jj = (0, \ldots, 1, \ldots, 0)$; i.e., for each $k = 1, \ldots, \NN$, we have $j_k = 1$, $i_k = m_k - 1$, and $j_{k'} = 0, i_{k'} = m_1$ for all $k' \neq k$.
Since $\binom{m_k}{m_k - 1, 1} = m_k$ and $\binom{m_{k'}}{m_{k'}, 0} = 1$, the coefficient of $\hbar$ in the above expression simplifies as follows:
\eqn{
	\sum_{k=1}^\NN 
		\frac{m_k}{f_0^k} \bm{f}^\mm_{\bm{0}}
		\big(f^k_1 + w_k\big)
\fullstop
}
Therefore, continuing \eqref{211214180548} and using the above calculation together with \eqref{211208150755} and \eqref{211214180637}, we find for every $j = 1, \ldots, \NN$:
\vspace{-5pt}
\eqns{
	\Big[ \FF^j \big(\hbar, f_0 + \hbar f_1 + \hbar w\big) \Big]^{\OO (\hbar)} \!\!\!
	&= \sum_{m=0}^\infty \sum_{|\mm| = m} \!\!
			\FF^j_{1\mm} \bm{f}^\mm_{\bm{0}}
		+ \sum_{k=1}^\NN \sum_{m=0}^\infty \sum_{|\mm| = m} \!\!
			\FF^j_{0\mm}
			\frac{m_k}{f_0^k} \bm{f}^\mm_{\bm{0}} \big(f^k_1 + w_k\big)
\displaybreak
\\
	&= \sum_{m=0}^\infty \sum_{|\mm| = m}
			\FF^j_{1\mm} \bm{f}^\mm_{\bm{0}}
		+ \sum_{k=1}^\NN [\JJ_0]_{kj} f_1^k 
		+ \sum_{k=1}^\NN [\JJ_0]_{kj} w_k
\fullstop
}
Using \eqref{211208162829}, it is now clear this this expression equals the $j$-th component of $\JJ_0 w$.
\end{proof}

The analogue of the Formal Implicit Function Theorem (\autoref{211209161918}) for equations of the form \eqref{211206173102} is especially easy to formulate.

\enlargethispage{20pt}

\begin{lem}{211209163535}
Let
\vspace{-5pt}
\eqntag{\label{211209163539}
	\hat{\GG} 
		= \hat{\GG} (x, \hbar, w) 
		\coleq \sum_{k=0}^\infty \GG_k (x, w) \hbar^k
\vspace{-5pt}
}
be any formal power series in $\hbar$ with holomorphic coefficients $\GG_k : X_0 \times \Complex_w^\NN \to \Complex^\NN$ for some domain $X_0 \subset \Complex^d_x$.
Then there is a unique formal power series
\vspace{-10pt}
\eqntag{\label{211209163813}
	\hat{g} = \hat{g} (x,\hbar) = \sum_{n=0}^\infty g_n (x) \hbar^n
\vspace{-10pt}
}
with holomorphic coefficients $g_k : X_0 \to \Complex^\NN$, which satisfies $\hat{g} (x, \hbar) = \hat{\GG} \big(x, \hbar, \hat{g} (x, \hbar)\big)$ for all $x \in X_0$.
In other words, the equation $w = \hbar \hat{\GG} (x, \hbar, w)$ has a unique formal power series solution $w = \hat{g} (x, \hbar)$.

In particular, if $S \subset \Complex_\hbar$ is a sectorial domain at the origin and $\GG$ is a holomorphic map $X_0 \times S \times \Complex^\NN_w \to \Complex^\NN$ which admits the power series $\hat{\GG}$ as a locally uniform asymptotic expansion as $\hbar \to 0$ in $S$, then the equation $w = \GG (x,\hbar,w) = 0$ has a unique formal power series solution $w = \hat{g} (x, \hbar)$ as above.

Moreover, $g_0 \equiv 0$ and all the higher-order coefficients $g_n$ are given by the following recursive formula: for every $i = 1, \ldots, \NN$,
\vspace{-5pt}
\eqntag{\label{211209163820}
	g_{n+1}^i
	= \sum_{k = 0}^{n} \sum_{m=0}^{n-k} \sum_{|\mm| = m} \sum_{|\nn| = n-k}
		\GG^i_{k\mm} \bm{g}^\mm_\nn
\fullstop{,}\vspace{-15pt}
}
where
\eqntag{\label{211209164708}
	\bm{g}^\mm_\nn 
		\coleq
		\left(\sum_{|\jj_1| = n_1}^{\jj_1 \in \Natural^{m_1}}
			g_{j_{1,1}}^1 \cdots g_{j_{1,m_1}}^1
		\right)
		\cdots
		\left(\sum_{|\jj_\NN| = n_\NN}^{\jj_\NN \in \Natural^{m_\NN}}
			g_{j_{\NN,1}}^\NN \cdots g_{j_{\NN,m_\NN}}^\NN
		\right)
\fullstop{,}
}
and where $\GG^i_{k\mm} = \GG^i_{k\mm} (x)$ are the coefficients of the double power series expansion
\vspace{-5pt}
\eqntag{\label{211209163824}
	\hat{\GG}^i (x, \hbar, w)
		= \sum_{k=0}^\infty \sum_{m=0}^\infty \sum_{|\mm| = m} \GG^i_{k\mm} (x) \hbar^k w^\mm
\fullstop\vspace{-10pt}
}
\end{lem}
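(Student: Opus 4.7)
The plan is to proceed exactly as in the proof of \autoref{211209161918}, but exploit the fact that the equation $w = \hbar \hat{\GG}(x, \hbar, w)$ is already in a form where the $\hbar^0$-term on the right-hand side vanishes identically. This makes the order-by-order analysis much cleaner: at each order, the coefficient $g_{n+1}$ appears linearly on the left with no matrix to invert, so everything reduces to a direct recursion.

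First I would substitute the ansatz \eqref{211209163813} into $w = \hbar \hat{\GG}(x, \hbar, w)$ and expand the right-hand side using the same multinomial identity \eqref{211208150118} already established in the proof of \autoref{211209161918}, but now applied to $\hat{g}^\mm$ with the shorthand $\bm{g}^\mm_\nn$ defined by \eqref{211209164708}. Combining this with the double expansion \eqref{211209163824}, the $i$-th component of the equation becomes
\eqntag{\label{plan-expand}
\sum_{n=0}^\infty g_n^i \hbar^n
= \sum_{n=0}^\infty \sum_{k=0}^n \sum_{m=0}^\infty \sum_{|\mm|=m} \sum_{|\nn|=n-k} \GG^i_{k\mm}\, \bm{g}^\mm_\nn\, \hbar^{n+1}\fullstop
}
Matching coefficients of $\hbar^0$ on both sides immediately yields $g_0^i = 0$ for every $i$, since the right-hand side has no $\hbar^0$-term.

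Next, for $n \geq 0$, matching coefficients of $\hbar^{n+1}$ on both sides of \eqref{plan-expand} directly gives the stated recursion \eqref{211209163820}. The essential observation, which I would spell out, is that the right-hand side of \eqref{211209163820} involves $\bm{g}^\mm_\nn$ with $|\nn| = n - k \leq n$, and that $\bm{g}^\mm_\nn$ by definition \eqref{211209164708} depends only on $g_j^i$ with $j \leq |\nn| \leq n$. Hence $g_{n+1}$ is determined recursively by the already-known $g_0, \ldots, g_n$, each of which is a holomorphic function on $X_0$ by induction (as a polynomial combination of holomorphic functions). The induction also gives uniqueness, since the recursion leaves no freedom once $g_0 = 0$ is forced.

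For the final ``in particular'' clause about analytic $\GG$ admitting $\hat{\GG}$ as an asymptotic expansion, the argument is immediate: the equation $w = \hbar \GG(x, \hbar, w)$ and its formal counterpart $w = \hbar \hat{\GG}(x, \hbar, w)$ have identical order-by-order expansions in $\hbar$, so the recursion \eqref{211209163820} applies verbatim to produce the unique formal solution. There is no real obstacle here — the only point requiring care is the bookkeeping of indices in verifying that the summation ranges in \eqref{211209163820} match what one obtains from reading off the $\hbar^{n+1}$-coefficient of \eqref{plan-expand}, which amounts to checking that for fixed $n$ and $k$, the constraint $|\nn| = n-k$ forces $m \leq n-k$ only when $\bm{g}^\mm_\nn \neq 0$ (since $|\nn| \geq m$ is not required, but the sum over $m$ can be effectively truncated at $n-k$ whenever $g_0 = 0$ forces $\bm{g}^\mm_\nn = 0$ for $|\nn| < m$ — a fact which actually follows from $g_0 = 0$ and the definition \eqref{211209164708}).
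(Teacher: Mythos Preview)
Your proposal is correct and follows essentially the same approach as the paper's own proof: substitute the ansatz, expand $\hat{g}^\mm$ via the multinomial identity \eqref{211208150118}, reindex to collect powers of $\hbar$, and observe that the truncation $m \leq n-k$ in \eqref{211209163820} is justified because $g_0 \equiv 0$ forces $\bm{g}^\mm_\nn = 0$ whenever $|\mm| > |\nn|$. The paper's proof is slightly terser (it does not separately isolate the $\hbar^0$-step or spell out the inductive holomorphy), but the computation and the key observation about the truncation are identical.
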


\enlargethispage{10pt}

\begin{proof}
The proof is a computation very similar to the one in the proof of \autoref{211209161918}.
Plugging the solution ansatz \eqref{211209163813} into the double power series expansion \eqref{211209163824} of $\hat{\GG}^i$, the righthand side of the equation $w = \hbar \hat{\GG} (x, \hbar, w)$ becomes:
\vspace{-5pt}
\eqns{
	\hbar \sum_{k=0}^\infty \sum_{m=0}^\infty \sum_{|\mm| = m}
	 \GG^i_{k\mm} \hbar^k \left( \sum_{n=0}^\infty g_n \hbar^n \right)^\mm
	&= \hbar \sum_{k=0}^\infty \sum_{m=0}^\infty 
				\sum_{|\mm| = m} \sum_{n=0}^\infty \sum_{|\nn|=n}
			\GG^i_{k\mm} \bm{g}^\mm_\nn \hbar^{k+n}
\\	&= \hbar \ORANGE{\sum_{n=0}^\infty \sum_{k=0}^n} \sum_{m=0}^{\infty} 
				\sum_{|\mm| = m} \sum_{|\nn|=\ORANGE{n-k}}
			\GG^i_{k\mm} \bm{g}^\mm_\nn \hbar^{\ORANGE{n}}
\\	&= \hbar \sum_{n=0}^\infty \sum_{k=0}^n \sum_{m=0}^{\ORANGE{n-k}} 
				\sum_{|\mm| = m} \sum_{|\nn|=n-k}
			\GG^i_{k\mm} \bm{g}^\mm_\nn \hbar^{n}
\fullstop{,}\vspace{-10pt}
}
where in the last step we noticed that all terms with $m > |\nn| = n-k$ are zero because $g_0 \equiv 0$; cf. \eqref{211209164708}.
\end{proof}

\subsection{Gevrey Regularity of the Formal Solution}

Now we show that the formal Borel transform of the formal solution $\hat{f}$ is a convergent power series in the Borel variable $\xi$; that is, the coefficients $f_n$ grow not faster than $n!$.
More precisely, we prove the following proposition.

\begin{prop}[{Gevrey Formal Implicit Function Theorem}]{211209171812}
\mbox{}\\
Assume all the hypotheses of \autoref{211209161918} and suppose in addition that the power series $\hat{\FF}$ is locally uniformly Gevrey on $X \times \Complex_z^\NN$.
Then $X_0 \subset X$ can be chosen so small that the formal power series $\hat{f}$ is uniformly Gevrey on $X_0$.
In particular, the formal Borel transform 
\eqntag{
	\hat{\phi} (x, \xi) =
	\hat{\Borel} [ \, \hat{f} \, ] (x, \xi)
		\coleq \sum_{n=0}^\infty \tfrac{1}{n!} f_{n+1} (x) \xi^n
}
is a uniformly convergent power series in $\xi$.
Concretely, if $X_0 \subset X$ is any subset where all eigenvalues of $\JJ_0$ are bounded from below and such that there are $\AA, \BB > 0$ such that $|\FF_k (x, z)| \leq \AA \BB^k k!$ for all $k \geq 0$, uniformly for all $x \in X_0$ and for all $z \in \Complex_z^\NN$ with $|z| < \RR$ for some $\RR > 0$, then there are constants $\CC, \MM > 0$ such that
\eqntag{
	\big| f_k (x) \big| \leq \CC \MM^k k!
\qqquad
	\text{$\forall x \in X_0, \forall k$\fullstop}
}
\end{prop}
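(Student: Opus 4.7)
The plan is to exploit the explicit recursive formula \eqref{211209163820} for the coefficients $g_n$ of the formal solution to the standard-form equation $w = \hbar\GG(x,\hbar,w)$ furnished by \autoref{211208172717}, induct directly to prove the Gevrey bound $|g_n^i(x)| \leq \CC\MM^n n!$, and then transfer this via the change of variables $z = f_0 + \hbar(f_1 + w)$ to the claimed Gevrey bound on $f_n$.

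First, from the hypothesis $|\FF_k(x,z)| \leq \AA\BB^k k!$ and the Cauchy estimates, I would extract Taylor-coefficient bounds $|\FF^i_{k\mm}(x)| \leq \AA\BB^k k!/\RR^{|\mm|}$ uniformly on $X_0$. The explicit formula \eqref{211209144657} for $\GG$, combined with the uniform upper bound on $\|\JJ_0^{-1}\|$ on $X_0$ available since the eigenvalues of $\JJ_0$ are bounded from below there, then yields an analogous Gevrey-type bound $|\GG^i_{k\mm}(x)| \leq \AA'\BB'^k k!/\RR'^{|\mm|}$ for some new positive constants $\AA', \BB', \RR'$.

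Next I would induct on $n$. Because $g_0 \equiv 0$, every index $j_{i,l}$ in the definition \eqref{211209164708} of $\bm{g}^\mm_\nn$ must satisfy $j_{i,l} \geq 1$ for a non-vanishing contribution, so the inductive hypothesis yields the key term-level estimate
\[
|\bm{g}^\mm_\nn(x)| \leq \CC^{|\mm|} \MM^{|\nn|} \prod_{i=1}^\NN T(n_i, m_i)\fullstop
\]
where $T(n,m) \coleq \sum_{j_1 + \cdots + j_m = n,\, j_l \geq 1} j_1!\cdots j_m!$. The proof then rests on two elementary combinatorial lemmas: (i) $T(n,m) \leq n!$ for all $m \leq n$, which follows by induction on $m$ from the identity $j!(n-j)! = n!/\binom{n}{j}$ and the (easily verified) fact that $\sum_{j=1}^{n-1} 1/\binom{n}{j} \leq 1$; and (ii) $\sum_{|\nn|=N} \prod_i n_i! \leq C_\NN \cdot N!$ for a constant $C_\NN$ depending only on $\NN$, since the sum is dominated by the $\NN$ extreme multiindices (one component equal to $N$, the rest zero) while interior multiindices contribute reciprocal multinomial coefficients $1/\binom{N}{n_1,\ldots,n_\NN} = O(1/N)$. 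Lemma (ii) then controls the inner sum $\sum_{|\nn|=n-k} |\bm{g}^\mm_\nn|$ up to the factor $C_\NN(n-k)!$.

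Substituting these estimates into \eqref{211209163820} and invoking the hallmark Gevrey identity $k!(n-k)! \leq n!$, the geometric sum $\sum_k (\BB'/\MM)^k$ convergent for $\MM > \BB'$, and the sum $\sum_m \binom{m+\NN-1}{\NN-1}(\CC/\RR')^m$ convergent for $\CC < \RR'$, I obtain $|g_{n+1}^i(x)| \leq K\, n!\, \MM^{n+1}/(\MM - \BB')$ for some constant $K$ depending only on $\AA', \BB', \RR', C_\NN, \NN$. The induction closes provided $\CC, \MM$ are chosen large enough that both $K/(\MM - \BB') \leq \CC$ and the base case $|g_1^i| = |\GG^i_{0\bm{0}}| \leq \AA' \leq \CC\MM$ hold. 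Finally, the change of variables gives $f_n = g_{n-1}$ for $n \geq 2$, so the Gevrey bound transfers to $f_n$ on a compactly contained subdomain of $X_0$ after absorbing the trivial uniform bounds on $f_0, f_1$. The main obstacle is lemma (ii): without this sharper control on the sum of factorial products, one obtains only the polynomial factor $(n+1)^{\NN-1}$ from counting the multiindices $\nn$ with $|\nn|=n-k$, which cannot be absorbed into $\MM^{n+1}$ and would break the induction for $\NN \geq 3$.
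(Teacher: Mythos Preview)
Your approach is correct and takes a genuinely different route from the paper's. Both begin identically---reduce to standard form via \autoref{211208172717} and work with the recursion \eqref{211209163820}---but diverge at the core estimate.

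The paper does \emph{not} close a direct induction with a single constant $\MM$. Instead it introduces an auxiliary majorant sequence $(\MM_n)$ defined recursively, establishes $|g^i_{n+1}| \leq \MM_{n+1}\, n!$ using only the crude inequality $i!\,j!\leq(i+j)!$ (yielding $|\bm g^\mm_\nn| \leq \bm\MM^\mm_\nn\,(|\nn|-|\mm|)!$), and then shows $\MM_n \leq \MM^n$ by recognising that the generating series $\hat p(t) = \sum \MM_n t^n$ satisfies a functional equation $\hat p = \AA t\,\QQ(t)\,\QQ(\hat p)$ for an explicit convergent $\QQ$; the Holomorphic Implicit Function Theorem then forces $\hat p$ to converge. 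No analogue of your lemma~(ii) is needed because the majorant sequence absorbs all the multi-index combinatorics.

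Your route replaces this generating-function trick by the two explicit combinatorial bounds and closes the induction in one pass. This is more elementary---no auxiliary analytic step---and makes the dependence of the final Gevrey constants on $\AA',\BB',\RR',\NN$ completely explicit. The paper's route, by contrast, is more robust: the identical majorant-series machinery reappears essentially verbatim in the Borel-plane estimates of \autoref{211124143506}, so it unifies the formal and analytic halves of the main theorem.

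One remark on your lemma~(ii): the heuristic you give (``interior multiindices contribute $O(1/N)$'') is not quite enough, since there are $\sim N^{\NN-1}$ such multiindices. The clean argument is inductive in $\NN$: writing $\binom{N}{\nn} = \binom{N}{n_\NN}\binom{N-n_\NN}{n_1,\ldots,n_{\NN-1}}$ factorises the sum and gives $\sum_{|\nn|=N}\binom{N}{\nn}^{-1} \leq c^{\NN-1}$ where $c = \sup_{M\geq 0}\sum_{k=0}^M\binom{M}{k}^{-1} = 8/3$. With that fix your induction closes exactly as described.
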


\begin{proof}
Let $X_0 \subset X$ be such that all the eigenvalues of the invertible holomorphic matrix $\JJ_0$ from \eqref{211208151359} are bounded from below.
Then, by \autoref{211208172717}, the proof boils down to proving the following claim.

\textbf{Claim.}
\textit{
Assume all the hypotheses of \autoref{211209163535} and suppose that the power series $\hat{\GG}$ is Gevrey uniformly for all $x \in X_0$ and locally uniformly for all $w \in \Complex_w^\NN$.
Then the formal solution $\hat{g}$ is also uniformly Gevrey on $X_0$.}

Let $\AA, \BB > 0$ be constants such that, for all $i = 1, \ldots, \NN$, all $k,m \in \Natural$, all $\mm \in \Natural^\NN$ such that $|\mm| = m$, and all $x \in X_0$,
\eqntag{\label{211209172430}
	\big| \GG^i_{k\mm} (x) \big| \leq \rho_m \AA \BB^{k+m} k!
\fullstop{,}
}
where $\rho_m$ is a normalisation constant defined as\footnote{The righthand side is the total number of weak compositions of $m$ into $\NN$ parts.}
\eqntag{\label{211209172628}
	\frac{1}{\rho_m} \coleq \sum_{|\mm| = m} 1 = \tbinom{m + \NN - 1}{\NN - 1}
\fullstop
}
We will show that there is a constant $\MM > 0$ such that
\eqntag{\label{211209172734}
	\big| g_{n+1}^i (x) \big| \leq \MM^{n+1} n!
\qqquad
	\text{$\forall x \in X_0, ~\forall n \in \Natural$\fullstop}
}
This bound will be demonstrated in two main steps.
First, we will recursively construct a sequence $\set{\MM_n}_{n=0}^\infty$ of nonnegative real numbers such that
\eqntag{\label{211209173300}
	\big| g_{n+1}^i (x) \big| \leq \MM_{n+1} n!
\qqquad
	\text{$\forall x \in X_0, ~\forall n \in \Natural$\fullstop}
}
Then we will show that there is a constant $\MM > 0$ such that $\MM_n \leq \MM^n$ for all $n$.

\newpage
\textsc{Step 1: Construction of $\set{\MM_n}_{n=0}^\infty$.}
Let $\MM_0 \coleq 0$.
We can take $\MM_1 \coleq \AA$ because $g_1^i = \GG^i_{0\bm{0}}$.
Now we use induction on $n$ and formula \eqref{211209163820}, which is more convenient to rewrite as follows:
\eqntag{\label{211209173912}
	g_{n+1}^i
	= \sum_{m=0}^\infty \sum_{k=0}^n \sum_{|\mm| = m} \sum_{|\nn| = n-k}
		\GG^i_{k\mm} \bm{g}^\mm_\nn
\fullstop
}
Notice that $\bm{g}^\mm_\nn = 0$ whenever $m = |\mm| > |\nn| = n-k$, so this expression really is the same as \eqref{211209163820}.
Assume that we have already constructed $\MM_0, \ldots, \MM_{n}$ such that $\big| g^i_{j} \big| \leq \MM_{j} (j-1)!$ for all $j = 1, \ldots, n$ and all $x \in X_0$.

Let us write down an estimate for $\bm{g}^\mm_\nn$ using formula \eqref{211209164708}:
\eqns{
	\big| \bm{g}^\mm_\nn \big|
	&\leq
		\left(\sum_{|\jj_1| = n_1}^{\jj_1 \in \Natural^{m_1}}
			\big| g_{j_{1,1}}^1 \big| \cdots \big| g_{j_{1,m_1}}^1 \big|
		\right)
		\cdots
		\left(\sum_{|\jj_\NN| = n_\NN}^{\jj_\NN \in \Natural^{m_\NN}}
			\big| g_{j_{\NN,1}}^\NN \big| \cdots \big| g_{j_{\NN,m_\NN}}^\NN \big|
		\right)
\\
	&\leq
		\left(\sum_{|\jj_1| = n_1}^{\jj_1 \in \Natural^{m_1}}
			\MM_{j_{1,1}} \cdots \MM_{j_{1,m_1}}
		\right)
		\cdots
		\left(\sum_{|\jj_\NN| = n_\NN}^{\jj_\NN \in \Natural^{m_\NN}}
			\MM_{j_{\NN,1}} \cdots \MM_{j_{\NN,m_\NN}}
		\right)
			\big(|\nn| - |\mm| \big)!
\fullstop{,}
}
where we repeatedly used the inequality $i!j!\leq(i+j)!$.
Introduce the following shorthand:
\eqntag{\label{211209180714}
	\bm{\MM}^\mm_\nn 
	\coleq 
	\left(\sum_{|\jj_1| = n_1}^{\jj_1 \in \Natural^{m_1}}
		\MM_{j_{1,1}} \cdots \MM_{j_{1,m_1}}
	\right)
	\cdots
	\left(\sum_{|\jj_\NN| = n_\NN}^{\jj_\NN \in \Natural^{m_\NN}}
		\MM_{j_{\NN,1}} \cdots \MM_{j_{\NN,m_\NN}}
	\right)
\fullstop
}
Then the estimate for $\bm{g}^\mm_\nn$ becomes simply $|\bm{g}^\mm_\nn| \leq \bm{\MM}^\mm_\nn \big( |\nn|-|\mm| \big)!$.
Now we can estimate $g^i_{n+1}$ using formula \eqref{211209173912}:
\eqns{
	|g^i_{n+1}|
	&\leq
		\sum_{k=0}^n
		\sum_{m=0}^\infty 
		\sum_{|\mm| = m} \sum_{|\nn| = n-k}
		\rho_m \AA \BB^{k+m} k! \bm{\MM}^\mm_\nn \big( n-k-m \big)!
\\
		&\leq \AA \sum_{k=0}^n
			\BB^k
		\sum_{m=0}^\infty
		\sum_{|\mm| = m} \sum_{|\nn| = n-k}
		\rho_m \BB^m \bm{\MM}^\mm_\nn n!
\fullstop
}
Thus, we can define
\eqntag{\label{211209181700}
	\MM_{n+1} 
		\coleq \AA
		\sum_{k=0}^n
			\BB^k
		\sum_{m=0}^\infty
		\sum_{|\mm| = m} \sum_{|\nn| = n-k}
		\rho_m \BB^m \bm{\MM}^\mm_\nn
\fullstop
}

\enlargethispage{10pt}
\textsc{Step 2: Construction of $\MM$.}
To see that $\MM_n \leq \MM^n$ for some $\MM > 0$, we argue as follows.
Consider the following pair of power series in an abstract variable $t$:
\eqntag{
	\hat{p} (t) \coleq \sum_{n=0}^\infty \MM_n t^n
\qtext{and}
	\QQ (t) \coleq \sum_{m=0}^\infty \BB^m t^m
\fullstop
}
Notice that $\hat{p} (0) = \MM_0 = 0$ and that $\QQ (t)$ is convergent.
We will show that $\hat{p} (t)$ is also convergent.
The key is the observation that they satisfy the following equation:
\eqntag{\label{211209182719}
	\hat{p} (t)
		= \AA t \QQ (t) \QQ \big( \hat{p} (t) \big)
		= \AA t \QQ (t) \sum_{m=0}^\infty \BB^m \hat{p}(t)^m
\fullstop
}
This equation was found by trial and error.
In order to verify it, we rewrite the power series $\QQ (t)$ in the following strange way:
\eqn{
	\QQ (t) = \sum_{m=0}^\infty \sum_{|\mm| = m} \rho_m \BB^m t^{\mm}
\fullstop{,}
}
where $t^\mm \coleq t^{m_1} \cdots t^{m_\NN} = t^m$.
Then \eqref{211209182719} is straightforward to verify directly by substituting the power series $\hat{p}(t)$ and $\QQ(t)$ and comparing the coefficients of $t^{n+1}$ using the defining formula \eqref{211209181700} for $\MM_{n+1}$.
Indeed, using the notation introduced in \eqref{211209180714}, we see that
\eqns{
	\hat{p} (t)^{\mm}
	&= \hat{p} (t)^{m_1} \cdots \hat{p} (t)^{m_\NN}
\\
	&= 	\left( \sum_{n_1=0}^\infty \MM_{n_1} t^{n_1} \right)^{\!\! m_1}
		\cdots
		\left( \sum_{n_\NN=0}^\infty \MM_{n_\NN} t^{n_\NN} \right)^{\!\! m_\NN}
\\
	&= 
	\left(\sum_{n_1=0}^\infty \sum_{|\ii_1| = n_1}^{\ii_1 \in \Natural^{m_1}}
		\MM_{i_{1,1}} \cdots \MM_{i_{1,m_1}}
		t^{n_1}
	\right)
	\cdots
	\left(\sum_{n_\NN=0}^\infty \sum_{|\ii_\NN| = n_\NN}^{\ii_\NN \in \Natural^{m_\NN}}
		\MM_{i_{\NN,1}} \cdots \MM_{i_{\NN,m_\NN}}
		t^{n_\NN}
	\right)
\\
	&=
	\sum_{n=0}^\infty
	\sum_{|\nn|=n}
	\left(\sum_{|\ii_1| = n_1}^{\ii_1 \in \Natural^{m_1}}
		\MM_{i_{1,1}} \cdots \MM_{i_{1,m_1}}
	\right)
	\cdots
	\left(\sum_{|\ii_\NN| = n_\NN}^{\ii_\NN \in \Natural^{m_\NN}}
		\MM_{i_{\NN,1}} \cdots \MM_{i_{\NN,m_\NN}}
	\right)
	t^n
\\
	&=
	\sum_{n=0}^\infty
	\sum_{|\nn|=n}
	\bm{\MM}^\mm_\nn t^n
\fullstop
}
Then the righthand side of \eqref{211209182719} expands as follows:
\eqns{
	&\phantom{=}~~
	\AA t \left( \sum_{k=0}^\infty \BB^k t^k \right)
		\left(
			\sum_{m=0}^\infty \sum_{|\mm| = m} \rho_m \BB^m 
			\big( \hat{p} (t) \big)^{\mm}
		\right)
\\
	&=
	\AA t \left( \sum_{k=0}^\infty \BB^k t^k \right)
		\left(
			\sum_{m=0}^\infty \sum_{|\mm| = m} \rho_m \BB^m
			\left(
				\sum_{n=0}^\infty
				\sum_{|\nn|=n}
				\bm{\MM}^\mm_\nn t^n
			\right)
		\right)
\\
	&=
	\AA t \left( \sum_{k=0}^\infty \BB^k t^k \right)
		\left( \sum_{n=0}^\infty \CC_n t^n \right)
\qtext{where}
	\CC_n \coleq \sum_{m=0}^\infty \sum_{|\mm| = m} \sum_{|\nn|=n} \rho_m \BB^m \bm{\MM}^\mm_\nn t^n
\\
	&=
	\AA t \sum_{n=0}^\infty \sum_{k=0}^n \BB^k \CC_{n-k} t^n
	= \sum_{n=0}^\infty 
		\left(
			\AA \sum_{k=0}^n \BB^k
			\sum_{m=0}^\infty \sum_{|\mm| = m} \sum_{|\nn|=n} \rho_m \BB^m \bm{\MM}^\mm_\nn
		\right)
		t^{n+1}
\fullstop{,}
}
which matches with \eqref{211209181700}.
Now, consider the following holomorphic function in two variables $(t,p)$:
\eqn{
	\FF (t,p) \coleq - p + \AA t \QQ(t) \QQ(p)
\fullstop
}
It has the following properties:
\enlargethispage{10pt}
\eqn{
	\FF (0,0) = 0
\qqtext{and}
	\evat{\frac{\del \FF}{\del p}}{(t,p) = (0,0)} = -1 \neq 0
\fullstop
}
By the Holomorphic Implicit Function Theorem, there exists a unique holomorphic function $p (t)$ near $t = 0$ such that $p (t) = 0$ and $\FF \big(t, p (t)\big) = 0$.
Thus, $\hat{p} (t)$ must be the convergent Taylor series expansion at $t = 0$ for $p(t)$, so its coefficients grow at most exponentially: i.e., there is a constant $\MM > 0$ such that $\MM_n \leq \MM^n$.
\end{proof}

\subsection{Exact Perturbation Theory}

Now we show that the convergent Borel transform $\hat{\phi} (x, \xi)$ of the formal solution admits an analytic continuation along a ray in the Borel $\xi$-plane and furthermore its Laplace transform is well-defined.
First, we prove the following lemma.

\begin{lem}{211124143506}
Let $X_0 \subset \Complex^d_x$ be a domain.
Let $S \coleq \set{\hbar ~\big|~ \Re (1/\hbar) > 1/\RR} \subset \Complex_\hbar$ be the Borel disc of some diameter $\RR > 0$.
Recall that its opening is $A_+ \coleq (-\pi/2, +\pi/2)$.
Let $\GG : X_0 \times S \times \Complex^\NN_w \to \Complex^\NN$ be a holomorphic map which admits a Gevrey asymptotic expansion
\eqntag{\label{211215140731}
	\GG (x, \hbar, w) \simeq \hat{\GG} (x, \hbar, w)
\quad
\text{as $\hbar \to 0$ along $\bar{A}_+$\fullstop{,}}
}
uniformly for all $x \in X_0$ and locally uniformly for all $w \in \Complex^\NN_w$.
Then there is a Borel disc $S_0 \coleq \set{\hbar ~\big|~ \Re (1/\hbar) > 1/\RR_0} \subset S$ of possibly smaller diameter $\RR_0 \in (0, \RR]$ such that there is a unique holomorphic map $g : X_0 \times S_0 \to \Complex^\NN$ which admits a uniform Gevrey asymptotic expansion
\eqntag{\label{211206174137}
	g (x, \hbar) \simeq \hat{g} (x, \hbar)
\quad
\text{as $\hbar \to 0$ along $\bar{A}_+$, unif. $\forall x \in X_0$\fullstop{,}}
}
and such that $g(x, \hbar) = \hbar \GG \big(x, \hbar, g(x,\hbar) \big) = 0$ for all $(x,\hbar) \in X_0 \times S_0$.
Furthermore, $g$ is the uniform Borel resummation of $\hat{g}$: for all $(x,\hbar) \in X_0 \times S_0$,
\eqntag{\label{211210124909}
	g (x, \hbar) = \cal{S} \big[ \: \hat{g} \: \big] (x, \hbar)
\fullstop
}
\end{lem}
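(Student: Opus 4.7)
The strategy is to pass to the Borel plane via the formal Borel transform $\hat\Borel$, solve the resulting nonlinear convolution equation by successive approximations (Picard iteration), and then recover $g$ by Laplace transform. By \autoref{211209163535} applied to $\hat\GG$, the equation $\hat g = \hbar \hat\GG(x,\hbar,\hat g)$ admits a unique formal solution $\hat g(x,\hbar) = \sum_{n \geq 1} g_n(x) \hbar^n$ with $g_n$ holomorphic on $X_0$. Since $\GG$ admits a uniform Gevrey asymptotic expansion along $\bar{A}_+$, the Claim in the proof of \autoref{211209171812} supplies constants $\CC, \MM > 0$ such that $|g_n(x)| \leq \CC \MM^n n!$ uniformly for $x \in X_0$. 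Consequently, the formal Borel transform
\[
\sigma(x,\xi) \coleq \hat\Borel[\hat g](x,\xi) = \sum_{n \geq 0} \tfrac{1}{n!} g_{n+1}(x)\, \xi^n
\]
converges on a disc of radius $1/\MM$, uniformly for $x \in X_0$.

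Next, I apply $\hat\Borel$ termwise to $\hat g = \hbar \hat\GG(x,\hbar,\hat g)$. Under $\hat\Borel$, the correspondences ``multiplication by $\hbar$'' $\leftrightarrow$ ``integration $\int_0^\xi d\eta$'' and ``product'' $\leftrightarrow$ ``convolution $*$'', together with the expansion $\hat\GG^i(x,\hbar,w) = \sum_{k,\mm} \GG^i_{k\mm}(x)\hbar^k w^\mm$, convert the equation into a nonlinear integral equation of the form
\[
\sigma(x,\xi) = \int_0^\xi \Phi\bigl(x,\eta,\sigma(x,\cdot)\bigr)(\eta)\, d\eta,
\]
where $\Phi$ assembles the $\GG^i_{k\mm}$ with monomials $\eta^{k-1}/(k-1)!$ and convolution powers $\sigma^{*\mm}$. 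I solve this by Picard iteration: set $\sigma^{(0)} \equiv 0$ and $\sigma^{(N+1)}(x,\xi) \coleq \int_0^\xi \Phi(x,\eta,\sigma^{(N)}(x,\cdot))(\eta)\, d\eta$. Each iterate is holomorphic in $\xi$ near the origin; the task is to continue $\sigma^{(N)}$ along $\bar{A}_+$ and to bound it uniformly in $N$ with at most exponential growth. The decisive observation is that the convolution identity $\int_0^\xi \eta^a (\xi - \eta)^b\, d\eta = \tfrac{a!\,b!}{(a+b+1)!} \xi^{a+b+1}$ combines with the factorial denominators already present in the Borel plane so as to produce, after passing to moduli of Taylor coefficients, precisely the recursion \eqref{211209181700} analysed in \autoref{211209171812}. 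Hence $|\sigma^{(N)}(x,\xi)|$ is dominated, coefficientwise and uniformly in $N$, by the power series $\hat p(|\xi|) = \sum_n \MM_n |\xi|^n$, which by Step 2 of that proof is the convergent Taylor expansion at $t = 0$ of the holomorphic implicit function determined by the scalar equation $-p + \AA t \QQ(t) \QQ(p) = 0$ via the ordinary Holomorphic Implicit Function Theorem. This yields a uniform bound $|\sigma^{(N)}(x,\xi)| \leq \CC'\, e^{|\xi|/\RR_0}$ for some $\RR_0 \in (0, \RR]$ on a suitable strip about $[0,+\infty)$, and passing to the limit produces a holomorphic solution $\sigma$ of the convolution equation with the same exponential bound along $\bar{A}_+$.

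Finally, set $S_0 \coleq \set{\hbar ~\big|~ \Re(1/\hbar) > 1/\RR_0} \subset S$. The exponential bound on $\sigma$ guarantees that $g(x,\hbar) \coleq \cal{S}[\hat g](x,\hbar) = \Laplace[\sigma](x,\hbar)$ converges absolutely and defines a holomorphic map $X_0 \times S_0 \to \Complex^\NN$. Standard Borel--Laplace duality (converting convolution into product and integration from $0$ into multiplication by $\hbar$) shows that $g$ satisfies $g = \hbar\GG(x,\hbar,g)$, and the uniform Borel summability of $\hat g$ automatically implies the uniform Gevrey asymptotic expansion $g \simeq \hat g$ as $\hbar \to 0$ along $\bar{A}_+$, uniformly on $X_0$. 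Uniqueness of $g$ follows from the standard Watson-type uniqueness on Borel discs. The main obstacle is the uniform-in-$N$ exponential estimate on the Picard iterates in the presence of the nonlinear convolution kernel: one must organise the bookkeeping so that the iteration inequality reduces precisely to the recursion \eqref{211209181700} for $\MM_n$. Once this is recognised, the same scalar implicit-function majorant that controls the formal Gevrey growth of $\hat g$ in \autoref{211209171812} simultaneously controls the exponential growth of $\sigma$ in the Borel plane, and all remaining steps are routine consequences of the Borel--Laplace formalism.
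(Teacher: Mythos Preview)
Your overall architecture (Borel transform, integral equation, successive approximations, Laplace) is right, but the crucial estimate is misidentified and, as written, does not give what you need.

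The majorant from \autoref{211209171812} with recursion \eqref{211209181700} and implicit equation $-p + \AA t\QQ(t)\QQ(p)=0$ controls the \emph{factorial growth} of the formal coefficients $g_n$: it gives $|g_{n+1}| \leq \MM_{n+1} n!$, hence only that $\hat\sigma(x,\xi) = \sum_n g_{n+1}(x)\xi^n/n!$ converges on a disc of radius $1/\MM$. It tells you nothing about analytic continuation of $\sigma$ along the full ray or about exponential growth at infinity, which is precisely what the Laplace step requires. So your sentence ``the same scalar implicit-function majorant that controls the formal Gevrey growth of $\hat g$ \ldots\ simultaneously controls the exponential growth of $\sigma$ in the Borel plane'' is the gap: those are genuinely different estimates and are obtained by different arguments.

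Concretely, the paper does not work with the formal double expansion $\sum_{k,\mm}\GG^i_{k\mm}\hbar^k w^\mm$ in the Borel plane. It uses the Gevrey hypothesis on $\GG$ along $\bar A_+$ to produce the \emph{analytic} Borel transforms $\alpha^i_\mm(x,\xi)=\Borel[\AA^i_\mm]$ of the $w$-coefficients, which are holomorphic on a strip $\Xi$ around $[0,\infty)$ with uniform bounds $|\alpha^i_\mm|\leq \rho_m\CC\BB^m e^{\LL|\xi|}$. The integral equation in the Borel plane is then written in terms of $a^i_\mm$ and $\alpha^i_\mm$ (equation \eqref{211124152233}), not in terms of $\GG^i_{k\mm}$ and monomials $\eta^{k-1}/(k-1)!$. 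Moreover, the ``successive approximations'' are not Picard iterates $\sigma^{(N+1)}=\int\Phi(\sigma^{(N)})$ but a series decomposition $\sigma=\sum_n\sigma_n$ with each $\sigma_n$ defined explicitly by \eqref{211124133914}; the point is that convolution estimates (\autoref{211205075846}) force $|\sigma_n|\leq \MM_n\,|\xi|^n e^{\LL|\xi|}/n!$, and summing in $n$ gives the exponential bound. The $\MM_n$ here satisfy the \emph{different} recursion \eqref{211207174849}, and the relevant majorant equation is $\hat p(t)=(1+t)\QQ\big(t\hat p(t)\big)$, not the one from \autoref{211209171812}. Picard iteration does not naturally produce the $|\xi|^n/n!$ gain you need, and the bookkeeping you allude to does not reduce to \eqref{211209181700}.
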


\begin{proof}
First, uniqueness of $g$ follows from the asymptotic property \eqref{211206174137}.
Indeed, suppose $g'$ is another such map.
Then $g - g'$ is a holomorphic map ${X_0 \times S_0 \to \Complex^\NN}$ whose components are uniformly Gevrey asymptotic to $0$ as $\hbar \to 0$ along the closed arc $\bar{A}_+$ of opening angle $\pi$.
By Nevanlinna's Theorem (\cite[pp.44-45]{nevanlinna1918theorie} and \cite{MR558468}; see also \cite[Theorem B.11]{MY2008.06492}), there can only be one holomorphic function on $S_0$ (namely, the constant function $0$) which is Gevrey asymptotic to $0$ as $\hbar \to 0$ along $\bar{A}_+$.
Thus, each component of $g - g'$ must be identically zero.

To construct $g$, we start by expanding $\GG$ as a power series in $w$.
Each component $\GG^i$ of $\GG$ can be expressed as the following uniformly convergent multipower series in the components $w_1, \ldots, w_\NN$ of $w$:
\eqntag{\label{211124145949}
	\GG^i (x, \hbar, w) = \sum_{m=0}^\infty \sum_{|\mm| = m} \AA^i_\mm (x, \hbar) w^\mm
\qqquad
\text{(\:$i = 1, \ldots, \NN$\:)\fullstop{,}}
}
where $\AA^i_\mm w^\mm \coleq \AA^i_{m_1 \cdots m_\NN} w_1^{m_1} \cdots w_\NN^{m_\NN}$.
Then the vectorial equation $w = \hbar \GG (x, \hbar, w)$ can be written as the following coupled system of $\NN$ scalar equations:
\eqntag{\label{211124133406}
	w_i = \hbar \sum_{m=0}^\infty \sum_{|\mm| = m} \AA^i_\mm w^\mm
\qqquad
\text{(\:$i = 1, \ldots, \NN$\:)\fullstop}
}
It is convenient to separate the $m = 1$ term from the sum:
\eqntag{\label{211207142835}
	w_i = \hbar \AA^i_{\mathbf{0}} 
			+ \hbar \sum_{m=1}^\infty \sum_{|\mm| = m} \AA^i_\mm w^\mm
\qqquad
\text{(\:$i = 1, \ldots, \NN$\:)\fullstop}
}

\paragraph*{Step 1: The Borel Transform.}
Let $a_\mm^i = a_\mm^i (x)$ be the $\hbar$-leading-order part of $\AA_\mm^i$ and let $\alpha_\mm^i (x, \xi) \coleq \Borel \big[ \AA^i_\mm \big] (x, \xi)$.
By the assumption \eqref{211215140731}, there is some $\epsilon > 0$ such that each $\alpha_\mm^i$ is a holomorphic function on $X_0 \times \Xi$, where
\eqntag{
	\Xi \coleq \set{\xi ~\big|~ \op{dist} (\xi, \Real_+) < \epsilon}
\fullstop{,}
}
with uniformly at-most-exponential growth at infinity in $\xi$  (cf. \autoref{211215140949}), and
\eqntag{\label{211206123333}
	\AA^i_\mm (x, \hbar) = a^i_\mm (x) + \Laplace \big[\, \alpha^i_\mm \,\big] (x, \hbar)
}
for all $(x, \hbar) \in X_0 \times S$ provided that the diameter $\RR$ is sufficiently small.

Dividing each equation \eqref{211207142835} by $\hbar$ and applying the analytic Borel transform, we obtain the following system of $\NN$ coupled nonlinear ordinary differential equations with convolution:
\eqntag{\label{211206180828}
	\del_\xi \sigma^i
		= \alpha^i_{\mathbf{0}} 
		+ \sum_{m=1}^\infty \sum_{|\mm| = m} 
			\Big( a^i_\mm \sigma^{\ast \mm} + \alpha^i_\mm \ast \sigma^{\ast \mm} \Big)
\qqquad
\text{(\:$i = 1, \ldots, \NN$\:)\fullstop{,}}
}
where $\sigma^{\ast \mm} \coleq (\sigma^1)^{\ast m_1} \ast \cdots \ast (\sigma^\NN)^{\ast m_\NN}$ and the unknown variables $w_i$ and $\sigma^i$ are related by $\sigma^i = \Borel [w_i]$ and $w_i = \Laplace [\sigma^i]$.
A solution of the system \eqref{211206180828} with initial condition $\sigma (x, 0) = a_{\mathbf{0}} (x)$ is equivalently the solution of the following system of $\NN$ coupled integral equations:
\eqntag{\label{211124152233}
	\sigma^i = a^i_\mathbf{0}
		+ \int_0^\xi 
			\left[ \alpha^i_\mathbf{0} 
				+ \sum_{m=1}^\infty \sum_{|\mm| = m}
				\Big( a^i_\mm \sigma^{\ast \mm} + \alpha^i_\mm \ast \sigma^{\ast \mm} \Big)
			\right]
		\dd{t}
\quad
\text{(\:$i = 1, \ldots, \NN$\:)\fullstop{,}}
}
where the integral is taken along the straight line segment from $0$ to $\xi$.

\paragraph*{Step 2: Method of Successive Approximations.}
We solve this integral equation using the method of successive approximations.
To this end, define a sequence of holomorphic maps $\set{\sigma_n = (\sigma_n^1, \ldots, \sigma_n^\NN) : X_0 \times \Xi \to \Complex^\NN}_{n=0}^\infty$, as follows: for each $i = 1, \ldots, \NN$, let
\vspace{-10pt}\enlargethispage{10pt}
\eqntag{\label{211206181512}
	\sigma_0^i \coleq a^i_{\mathbf{0}},
\qquad
	\sigma_1^i 
		\coleq \int_0^\xi 
		\left[ \alpha^j_\mathbf{0} 
			+ \sum_{|\mm| = 1} a^i_\mm \sigma_0^\mm
		\right] \dd{t}
\fullstop{,}\vspace{-10pt}
}
and for all $n \geq 2$,
\vspace{-10pt}
\eqntag{\label{211124133914}
	\sigma^i_n
		\coleq \int_0^\xi
			\sum_{m=1}^n
			\sum_{|\mm|=m}
			\left[
				a_\mm^i \sum_{|\nn|=n-m} \bm{\sigma}_\nn^\mm
				+ \alpha_\mm^i \ast \sum_{|\nn|=n-m-1} \bm{\sigma}_\nn^\mm
			\right] \dd{t}
\fullstop
}
Here, for any $\nn, \mm \in \Natural^\NN$, we have introduced the notation
\eqntag{\label{211207111746}
	\bm{\sigma}_\nn^\mm
	\coleq
		\left(
			\sum_{|\jj_1| = n_1}^{\jj_1 \in \Natural^{m_1}}
			\sigma^1_{j_{1,1}} \ast \cdots \ast \sigma^1_{j_{1,m_1}}
		\right)
			\ast
			\cdots
			\ast
		\left(
			\sum_{|\jj_\NN| = n_\NN}^{\jj_\NN \in \Natural^{m_\NN}}
			\sigma^\NN_{j_{\NN,1}} \ast \cdots \ast \sigma^\NN_{j_{\NN,m_\NN}}
		\right)
\fullstop
}
Let us also note the following simple but useful identities: 
\begin{gather}
	\bm{\sigma}^{\bm{0}}_{\bm{0}} = 1\fullstop{;}
	\qquad
	\bm{\sigma}^{\bm{0}}_\nn = 0 \text{ whenever $|\nn| > 0$\fullstop{;}}
\\
\label{211215153152}
	\bm{\sigma}^{\mm}_{\bm{0}}
		= (\sigma^1_0)^{\ast m_1} \ast \cdots \ast (\sigma^\NN_0)^{\ast m_\NN}
		= \tfrac{1}{(m-1)!}\sigma_0^\mm \xi^{m-1}
\fullstop
\end{gather}

\enlargethispage{10pt}
\textbf{Main Technical Claim.}
\textit{The infinite series
\vspace{-5pt}
\eqntag{\label{211206182147}
	\sigma (x, \xi) \coleq \sum_{n=0}^\infty \sigma_n (x, \xi)
\vspace{-5pt}
}
converges uniformly for all $(x, \xi) \in X_0 \times \Xi$ and defines a holomorphic solution of the integral equation \eqref{211124152233} with uniformly at-most-exponential growth at infinity in $\xi$; that is, there are constants $\DD, \KK > 0$ such that, for each $i = 1, \ldots, \NN$,
\eqntag{\label{211214194634}
	\big| \sigma^i (x, \xi) \big| \leq \DD e^{\KK |\xi|}
\qquad
\text{$\forall (x, \xi) \in X_0 \times \Xi$\fullstop}
}
Furthermore, the convergent formal Borel transform
\vspace{-5pt}
\eqntag{
	\hat{\sigma} (x, \xi) =
	\hat{\Borel} [ \, \hat{g} \, ] (x, \xi)
		= \sum_{n=0}^\infty \tfrac{1}{n!} g_{n+1} (x) \xi^n
\vspace{-5pt}
}
of the unique formal solution $\hat{g}$ is the Taylor series expansion of $\sigma$ at $\xi = 0$.
}

The assertions of \autoref{211124143506} follow from this claim by defining 
\eqntag{\label{211214194729}
	g (x, \hbar) \coleq \Laplace [\sigma] (x, \hbar) = \int_0^{+\infty} e^{-\xi/\hbar} \sigma (x, \xi) \dd{\xi}
\fullstop
}
Indeed, the exponential estimate \eqref{211214194634} implies that the Laplace transform \eqref{211214194729} is uniformly convergent for all $(x, \hbar) \in X_0 \times S_0$ where $S_0 = \set{\hbar ~\big|~ \Re (1/\hbar) > 1/\RR_0}$ as long as $\RR_0 < \KK^{-1}$.
We now turn to the proof of the Main Technical Claim.

\paragraph*{Step 3: Solution Check.}
\enlargethispage{20pt}
First, assuming that the infinite series $\sigma$ is uniformly convergent for all $(x, \xi) \in X_0 \times \Xi$, we verify that it satisfies the integral equation \eqref{211124152233} by direct substitution.
Thus, the righthand side of \eqref{211124152233} becomes:
\eqntag{\label{211206194021}
	a^i_\mathbf{0} + \int_0^\xi 
		\left[ \alpha^i_\mathbf{0}
			+ \BLUE{\sum_{m=1}^\infty \sum_{|\mm|=m} 
				a^i_\mm \left(\:\sum_{n=0}^\infty \sigma_n\right)^{\!\!\!\ast \mm}}
			+ \sum_{m=1}^\infty \sum_{|\mm|=m}
				\alpha^i_\mm \ast \left(\:\sum_{n=0}^\infty \sigma_n\right)^{\!\!\!\ast \mm}
		\right] \dd{t}
\GREY{.}
}
Using the notation introduced in \eqref{211207111746}, the $\mm$-fold convolution product of the infinite series $\sigma$ expands as follows:
\eqns{
	&\phantom{=}~~
	\left(\:\sum_{n=0}^\infty \sigma_n\right)^{\!\!\!\ast \mm}
\\		&= 	\left(\:
				\sum_{n_1=0}^\infty \sigma_{n_1}^1
			\right)^{\!\!\!\ast m_1} \!\!\!\!\!\!\!
			\ast \cdots \ast
			\left(\:
				\sum_{n_\NN=0}^\infty \sigma_{n_\NN}^\NN
			\right)^{\!\!\!\ast m_\NN}
\\
		&= 	\left(\:
				\sum_{n_1=0}^\infty \sum_{|\jj_1| = n_1}^{\jj_1 \in \Natural^{m_1}}
				\sigma^1_{j_{1,1}} \ast \cdots \ast \sigma^1_{j_{1,m_1}}
			\right)
			\ast \cdots \ast
			\left(\:
				\sum_{n_\NN=0}^\infty \sum_{|\jj_\NN| = n_\NN}^{\jj_\NN \in \Natural^{m_\NN}}
				\sigma^\NN_{j_{\NN,1}} \ast \cdots \ast \sigma^\NN_{j_{\NN,m_\NN}}
			\right)
\\
		&=	\sum_{n=0}^\infty \sum_{|\nn|=n}
			\left(
				\sum_{|\jj_1| = n_1}^{\jj_1 \in \Natural^{m_1}}
				\sigma^1_{j_{1,1}} \ast \cdots \ast \sigma^1_{j_{1,m_1}}
			\right)
				\ast
				\cdots
				\ast
			\left(
				\sum_{|\jj_\NN| = n_\NN}^{\jj_\NN \in \Natural^{m_\NN}}
				\sigma^\NN_{j_{\NN,1}} \ast \cdots \ast \sigma^\NN_{j_{\NN,m_\NN}}
			\right)
\\
		&= 	\sum_{n=0}^\infty \sum_{|\nn|=n}
			\bm{\sigma}_\nn^\mm
\fullstop
}
Use this to rewrite the \BLUE{blue} terms in \eqref{211206194021}, separating out first the \GREEN{$m=1$ part} and then the \ORANGE{$(m,n)=(1,1)$ part} using the identity \eqref{211215153152}:
\eqns{
	&\phantom{=}~~~
	\BLUE{\sum_{m=1}^\infty \sum_{|\mm|=m} 
				a^i_\mm \left(\:\sum_{n=0}^\infty \sigma_n\right)^{\!\!\!\ast \mm}}
	\!\!\!\!
\\
	&=
		\GREEN{\sum_{|\mm|=1} 
				a^i_\mm 
				\sum_{n=0}^\infty \sum_{|\nn|=n}
				\bm{\sigma}_\nn^\mm
				}
		+ \sum_{m=2}^\infty \sum_{|\mm|=m} 
				a^i_\mm
				\sum_{n=0}^\infty \sum_{|\nn|=n}
				\bm{\sigma}_\nn^\mm
\\
	&=
		\ORANGE{\sum_{|\mm|=1} 
				a^i_\mm \sigma_0^\mm
				}
				+
				\sum_{|\mm|=1} 
				a^i_\mm 
				\sum_{n=1}^\infty \sum_{|\nn|=n}
				\bm{\sigma}_\nn^\mm
		+ \sum_{m=2}^\infty \sum_{|\mm|=m} 
				a^i_\mm
				\sum_{n=0}^\infty \sum_{|\nn|=n}
				\bm{\sigma}_\nn^\mm
\fullstop
}
Substituting this back into \eqref{211206194021} and using \eqref{211206181512}, we find:
\begin{multline}
\label{211207125028}
	\sigma^i_0 + \ORANGE{\sigma^i_1} + \int_0^\xi
		\left[
			\sum_{|\mm|=1} 
				a^i_\mm 
				\sum_{n=1}^\infty \sum_{|\nn|=n}
				\bm{\sigma}_\nn^\mm
			+ \sum_{m=2}^\infty \sum_{|\mm|=m} 
				a^i_\mm
				\sum_{n=0}^\infty \sum_{|\nn|=n}
				\bm{\sigma}_\nn^\mm
		\right.
\\
		\left.
			+ \sum_{m=1}^\infty \sum_{|\mm|=m}
				\alpha^i_\mm \ast 
				\sum_{n=0}^\infty \sum_{|\nn|=n}
				\bm{\sigma}_\nn^\mm
		\right] \dd{t}
\fullstop
\end{multline}
The goal is to show that the the integral in \eqref{211207125028} is equal to $\sum_{n \geq 2} \sigma^i_n$.
Focus on the expression inside the integral:
\eqn{
	{\sum_{|\mm|=1} \!\!
		a^i_\mm 
		\sum_{n=1}^\infty \sum_{|\nn|=n} \!\!
		\bm{\sigma}_\nn^\mm}
	+ \BLUE{\sum_{m=2}^\infty \sum_{|\mm|=m} \!\!
		a^i_\mm
		\sum_{n=0}^\infty \sum_{|\nn|=n} \!\!
		\bm{\sigma}_\nn^\mm}
	+ \GREEN{\sum_{m=1}^\infty \sum_{|\mm|=m} \!\!\!
		\alpha^i_\mm \ast \!
		\sum_{n=0}^\infty \sum_{|\nn|=n} \!\!
		\bm{\sigma}_\nn^\mm}
\fullstop
}
Shift the summation index $n$ up by $1$ in the black sum, by $m$ in the \BLUE{blue} sum, and by $m+1$ in the \GREEN{green} sum:
\eqn{
	{\sum_{|\mm|=1} \!\!
		a^i_\mm 
		\sum_{n=\ORANGE{2}}^\infty \sum_{|\nn|=\ORANGE{n-1}} \!\!\!\!
		\bm{\sigma}_\nn^\mm}
	+ \BLUE{\sum_{m=2}^\infty \sum_{|\mm|=m} \!\!\!
		a^i_\mm
		\sum_{n=\ORANGE{m}}^\infty \sum_{|\nn|=\ORANGE{n-m}} \!\!\!\!\!
		\bm{\sigma}_\nn^\mm}
	+ \GREEN{\sum_{m=1}^\infty \sum_{|\mm|=m} \!\!\!
		\alpha^i_\mm \ast \!\!\!\!
		\sum_{n=\ORANGE{m+1}}^\infty \sum_{|\nn|=\ORANGE{n-m-1}} \!\!\!\!\!\!
		\bm{\sigma}_\nn^\mm}
\fullstop
}
Notice that all terms in the \BLUE{blue} sum with $n < m$ are zero, so we can start the summation over $n$ from $n = 2$ (which is the lowest possible value of $m$) without altering the result.
Similarly, all terms in the \GREEN{green} sum with $n < m + 1$ are zero, so we may as well start from $n = 2$.
The black sum is left unaltered.
Thus, we get:
\eqn{
	{\sum_{|\mm|=1} \!\!
		a^i_\mm 
		\sum_{n=2}^\infty \sum_{|\nn|=n-1} \!\!\!\!
		\bm{\sigma}_\nn^\mm}
	+ \BLUE{\sum_{m=2}^\infty \sum_{|\mm|=m} \!\!\!
		a^i_\mm
		\sum_{n=\ORANGE{2}}^\infty \sum_{|\nn|={n-m}} \!\!\!\!\!
		\bm{\sigma}_\nn^\mm}
	+ \GREEN{\sum_{m=1}^\infty \sum_{|\mm|=m} \!\!\!
		\alpha^i_\mm \ast \!
		\sum_{n=\ORANGE{2}}^\infty \sum_{|\nn|={n-m-1}} \!\!\!\!\!\!
		\bm{\sigma}_\nn^\mm}
\fullstop
}
The advantage of this way of expressing the sums is that we can now interchange the summations over $m$ and $n$ to obtain:
\eqn{
	\ORANGE{\sum_{n=2}^\infty}
	\left\{
		{\sum_{|\mm|=1} \!
			a^i_\mm \!\!\!
			\sum_{|\nn|=n-1} \!\!\!\!
			\bm{\sigma}_\nn^\mm}
		+ \BLUE{\sum_{m=2}^\infty \sum_{|\mm|=m} \!\!\!
			a^i_\mm \!\!\!
			\sum_{|\nn|={n-m}} \!\!\!\!\!
			\bm{\sigma}_\nn^\mm}
		+ \GREEN{\sum_{m=1}^\infty \sum_{|\mm|=m} \!\!\!
			\alpha^i_\mm \ast  \!\!\!\!\!
			\sum_{|\nn|={n-m-1}} \!\!\!\!\!\!
			\bm{\sigma}_\nn^\mm}
	\right\}
\fullstop
}
Observe that the black sum fits well into the \BLUE{blue} sum over $m$ to give the $m=1$ term.
So we get:
\eqn{
	\sum_{n=2}^\infty \sum_{m=1}^\infty \sum_{|\mm|=m}
	\left\{
		\BLUE{	a^i_\mm \!\!\!
			\sum_{|\nn|={n-m}} \!\!\!\!\!
			\bm{\sigma}_\nn^\mm}
		+ \GREEN{
			\alpha^i_\mm \ast  \!\!\!\!\!
			\sum_{|\nn|={n-m-1}} \!\!\!\!\!\!
			\bm{\sigma}_\nn^\mm}
	\right\}
\fullstop
}
Finally, notice that both sums are empty for $m > n$, so we get:
\eqn{
	\sum_{n=2}^\infty \sum_{m=1}^{\ORANGE{n}} \sum_{|\mm|=m}
	\left\{
		\BLUE{	a^i_\mm \!\!\!
			\sum_{|\nn|={n-m}} \!\!\!\!\!
			\bm{\sigma}_\nn^\mm}
		+ \GREEN{
			\alpha^i_\mm \ast  \!\!\!\!\!
			\sum_{|\nn|={n-m-1}} \!\!\!\!\!\!
			\bm{\sigma}_\nn^\mm}
	\right\}
\fullstop
}
The sum over $m$ is precisely the expression inside the integral in \eqref{211124133914} defining $\sigma_n^i$.
This shows that $\sigma$ satisfies the integral equation \eqref{211124152233}.

\paragraph*{Step 4: Convergence.}
Now we show that $\sigma$ is a uniformly convergent series on $X_0 \times \Xi$ and therefore defines a holomorphic map $X_0 \times \Xi \to \Complex^\NN$.
In the process, we also establish the estimate \eqref{211214194634}.

Let $\BB, \CC, \LL > 0$ be such that for all $(x, \xi) \in X_0 \times \Xi$, all $i = 1, \ldots, \NN$, and all $\mm \in \Natural^\NN$,
\eqntag{\label{211207142232}
	\big| a^i_\mm (x) \big| \leq \rho_m \CC \BB^{m}
\qqtext{and}
	\big| \alpha^i_\mm (x, \xi) \big| \leq \rho_m \CC \BB^{m} e^{\LL |\xi|}
\fullstop{,}
}
where $m = |\mm|$ and $\rho_m$ is the normalisation constant \eqref{211209172628}.
We claim that there are constants $\DD, \MM > 0$ such that for all $(x, \xi) \in X_0 \times \Xi$ and all $n \in \Natural$,
\eqntag{\label{211207142237}
	\big| \sigma^i_n (x, \xi) \big| \leq \DD \MM^n \frac{|\xi|^n}{n!} e^{\LL |\xi|}
\fullstop
}
If we achieve \eqref{211207142237}, then the uniform convergence and the exponential estimate \eqref{211214194634} both follow at once because
\eqn{
	\big| \sigma^i (x, \xi) \big|
		\leq \sum_{n=0}^\infty \big| \sigma_n^i (x, \xi) \big|
		\leq \sum_{n=0}^\infty \DD \MM^n \frac{|\xi|^n}{n!} e^{\LL |\xi|}
		\leq \DD e^{(\MM + \LL) |\xi|}
\fullstop
}
To demonstrate \eqref{211207142237}, we proceed in two steps.
First, we construct a sequence of positive real numbers $\set{\MM_n}_{n=0}^\infty$ such that for all $n \in \Natural$ and all $(x, \xi) \in X_0 \times \Xi$,
\eqntag{\label{211215185429}
	\big| \sigma_n^i (x, \xi) \big| \leq \MM_n \frac{|\xi|^n}{n!} e^{\LL |\xi|}
\fullstop
}
We will then show that there are constants $\DD, \MM$ such that $\MM_n \leq \DD \MM^n$ for all $n$.

\paragraph*{Step 4.1: Construction of $\set{\MM_n}$.}
We can take $\MM_0 \coleq \CC$ and $\MM_1 \coleq \CC (1 + \BB \MM_0)$ because $\sigma_0^i = a^i_{\bm{0}}$ and
\eqns{
	\big| \sigma_1^i \big|
	&\leq \int_0^\xi 
		\left( 
			|\alpha^i_{\bm{0}}| 
			+ \sum_{|\mm| = 1} |a^i_\mm| |\sigma_0^\mm|
		\right) |\dd{t}|
	\leq \int_0^\xi
		\left( 
			\CC e^{\LL |t|}
			+ \CC^2 \BB \rho_1 \sum_{|\mm| = 1} 1
		\right) |\dd{t}|
\\
	&\leq \CC (1 + \BB \MM_0) \int_0^{|\xi|} e^{\LL s} \dd{s}
	\leq \CC (1 + \BB \MM_0) |\xi| e^{\LL |\xi|}
\fullstop{,}
}
where in the final step we used \autoref{180824194855}.
Now, let us assume that we have already constructed the constants $\MM_0, \ldots, \MM_{n-1}$ such that $|\sigma_k^i| \leq \MM_k \frac{|\xi|^k}{k!} e^{\LL |\xi|}$ for all $k = 0, \ldots, n-1$ and all $i = 1, \ldots, \NN$.
Then we use formula \eqref{211124133914} together with \autoref{180824194855} and \autoref{211205075846} in order to derive an estimate for $\sigma_n$.

First, let us write down an estimate for $\bm{\sigma}_\nn^\mm$ using formula \eqref{211207111746}.
Thanks to \autoref{211205075846}, we have for each $i = 1, \ldots, \NN$ and all $n_i, m_i$:
\eqn{
	\sum_{|\jj_i| = n_j}^{\jj_i \in \Natural^{m_i}}
			\Big|
				\sigma^i_{j_{i,1}} \ast \cdots \ast \sigma^i_{j_{i,m_i}}
			\Big|
	\leq \sum_{|\jj_i| = n_i}^{\jj_i \in \Natural^{m_i}}  \!\!
			\MM_{j_{i,1}} \cdots \MM_{j_{i,m_i}}
			\frac{|\xi|^{n_i + m_i - 1}}{(n_i + m_i - 1)!} e^{\LL |\xi|}
\fullstop
}
Then, for all $\nn,\mm \in \Natural^\NN$ with $|\nn| + |\mm| \geq 1$,
\eqntag{
	\big| \bm{\sigma}^\mm_\nn \big|
		\leq
			\bm{\MM}^\mm_\nn
			\frac{|\xi|^{|\nn| + |\mm| - 1}}{(|\nn| + |\mm| - 1)!} e^{\LL |\xi|}
\fullstop
}
where $\bm{\MM}^\mm_\nn$ is the shorthand introduced in \eqref{211209180714}.
Therefore, formula \eqref{211124133914} gives the following estimate:
\eqns{
	|\sigma_n^i|
	&\leq \int_0^\xi \sum_{m=1}^n \sum_{|\mm|=m}
		\left\{ |a^i_\mm| \!\! \sum_{|\nn|=n-m} \!\!\!
			\big| \bm{\sigma}_\nn^\mm \big|  
			+ \!\!\!\! \sum_{|\nn|=n-m-1} \!\!\!\!\!\!
			\big| \alpha_\mm^i \ast \bm{\sigma}_\nn^\mm \big|
		\right\} |\dd{t}|
\\	
	&\leq \sum_{m=1}^n \sum_{|\mm|=m}
		\left\{
			\rho_m \CC \BB^m \!\! \sum_{|\nn|=n-m} \!\!\! \bm{\MM}^\mm_\nn
			+ \rho_m \CC \BB^m 
				\!\!\!\!\! \sum_{|\nn|=n-m-1} \!\!\!\!\!\! \bm{\MM}^\mm_\nn
		\right\}
		\int_0^\xi \frac{|t|^{n - 1}}{(n - 1)!} e^{\LL |t|} |\dd{t}|
\\
	&\leq \sum_{m=1}^n
		\rho_m \CC \BB^m
		\sum_{|\mm|=m}
		\left\{
			\sum_{|\nn|=n-m} \!\!\! \bm{\MM}^\mm_\nn
			+ \!\!\! \sum_{|\nn|=n-m-1} \!\!\!\!\!\! \bm{\MM}^\mm_\nn
		\right\}
		\frac{|\xi|^n}{n!} e^{\LL |\xi|}
}
Thus, this expression allows us to define the constant $\MM_n$ for $n \geq 2$.
In fact, a quick glance at this formula reveals that it can be extended to $n = 0, 1$ by defining
\eqntag{\label{211207174849}
	\MM_n \coleq
		\sum_{m=0}^n
			\rho_m \CC \BB^m
		\sum_{|\mm|=m}
		\left\{
			\sum_{|\nn|=n-m} \!\!\! \bm{\MM}^\mm_\nn
			+ \!\!\! \sum_{|\nn|=n-m-1} \!\!\!\!\!\! \bm{\MM}^\mm_\nn
		\right\}
\qqquad
	\forall n \in \Natural
\fullstop
}
Indeed, if $m = 0$, then the two sums inside the brackets can only possibly be nonzero when $n = 0$, in which case the second sum is empty and the first sum is $1$, so we recover $\MM_0 = \CC$.
Likewise, if $n = 1$, then the $m = 0$ term is $0 + \CC$ and the $m=1$ term is $\CC \BB \MM_0 + 0$, so again we recover the constant $\MM_1$ defined previously.

\paragraph*{Step 4.2: Bounding $\MM_n$.}
To see that $\MM_n \leq \DD \MM^n$ for some $\DD, \MM > 0$, consider the following two power series in an abstract variable $t$:
\eqntag{\label{211207181737}
	\hat{p} (t) \coleq \sum_{n=0}^\infty \MM_n t^n
\qqtext{and}
	\QQ (t) 
		\coleq \sum_{m=0}^\infty \CC \BB^m t^m
\fullstop
}
Notice that $\QQ (t)$ is convergent and $\QQ (0) = \CC = \MM_0$.
We will show that $\hat{p} (t)$ is also a convergent power series.
The key observation is that $\hat{p}$ satisfies the following functional equation:
\eqntag{\label{211207181733}
	\hat{p} (t) = (1+t) \QQ \big( t \hat{p} (t) \big)
\fullstop
}
This equation was found by trial and error.
In order to verify it, we rewrite the power series $\QQ (t)$ in the following way:
\eqntag{
	\QQ (t)	= \sum_{m=0}^\infty \sum_{|\mm|=m} \!\!
			\rho_m \CC \BB^m t^\mm
\fullstop
}
Then \eqref{211207181733} is straightforward to verify by direct substitution and comparing the coefficients of $t^n$ using the defining formula \eqref{211207174849} for $\MM_n$.
Thus, the righthand side of \eqref{211207181733} expands as follows:
\eqns{
	&\phantom{=}~~
	(1+t) \sum_{m=0}^\infty \sum_{|\mm|=m} \!\!
		\rho_m \CC \BB^m 
		\left( t \sum_{n=0}^\infty \MM_n t^n \right)^{\!\!\mm}
\\
	&= (1+t) \sum_{m=0}^\infty \sum_{|\mm|=m} \!\!
		\rho_m \CC \BB^m t^m
		\left( \sum_{n_1=0}^\infty \MM_{n_1} t^{n_1} \right)^{\!\!m_1} \!\!\!\!
		\cdots
		\left( \sum_{n_\NN=0}^\infty \MM_{n_\NN} t^{n_\NN} \right)^{\!\!m_\NN}
\\
	&= (1+t) \sum_{m=0}^\infty \sum_{|\mm|=m} \!\!
		\rho_m \CC \BB^m
		\sum_{n=0}^\infty \sum_{|\nn|=n} \bm{\MM}^\mm_\nn t^{n+m}
\\
	&= (1+t) \sum_{m=0}^\infty \sum_{|\mm|=m} \!\!
		\rho_m \CC \BB^m
		\sum_{n=0}^\infty
		\sum_{|\nn|=\ORANGE{n-m}} \bm{\MM}^\mm_\nn t^{\ORANGE{n}}
\\
	&= \sum_{n=0}^\infty \sum_{m=0}^\infty
		\rho_m \CC \BB^m \!\!
		\sum_{|\mm|=m}
		\left\{
			\sum_{|\nn|=n-m} \bm{\MM}^\mm_\nn t^{n}
			+ \sum_{|\nn|=n-m} \bm{\MM}^\mm_\nn t^{n+1}
		\right\}
\\
	&= \sum_{n=0}^\infty \sum_{m=0}^{\ORANGE{n}}
		\rho_m \CC \BB^m \!\!
		\sum_{|\mm|=m}
		\left\{
			\sum_{|\nn|=n-m} \bm{\MM}^\mm_\nn
			+ \sum_{|\nn|=\ORANGE{n-m-1}} \bm{\MM}^\mm_\nn
		\right\}
		t^{n}
}
In the final equality, we once again noticed that both sums inside the curly brackets are zero whenever $m > n$.

Now, consider the following holomorphic function in two variables $(t,p)$:
\eqntag{
	\FF (t, p) \coleq - p + (1+t) \QQ (tp)
\fullstop
}
It has the following properties:
\eqn{
	\FF (0, \CC) = 0
\qqtext{and}
	\evat{\frac{\del \PP}{\del p}}{(t,p) = (0, \CC)} = - 1 \neq 0
\fullstop
}
By the Holomorphic Implicit Function Theorem, there exists a unique holomorphic function $p(t)$ near $t = 0$ such that $p(0) = \CC$ and $\FF \big(t, p(t)\big) = 0$.
Therefore, $\hat{p} (t)$ must be the convergent Taylor series expansion of $p(t)$ at $t = 0$, so its coefficients grow at most exponentially: i.e., there are constants $\DD, \MM > 0$ such that $\MM_n \leq \DD \MM^n$.
This completes the proof of the Main Technical Claim and hence of \autoref{211124143506}.
\end{proof}

At last, we are able to collect all our work in order to finish the proof of the Gevrey Asymptotic Implicit Function Theorem (\autoref{211125100306}).

\begin{proof}[Proof of \autoref{211125100306}]
\label{211215172900}
By the Formal Implicit Function Theorem (\autoref{211209161918}), there is a subdomain $X_0 \subset X$ containing $x_0$ such that the equation $\FF (x, \hbar, z) = 0$ has a unique formal solution $\hat{f}$ satisfying $f_0 (x_0) = z_0$.
Let $f_0, f_1$ be its leading- and next-to-leading-order parts in $\hbar$.
As in \autoref{211208172717}, we change variables as $z = f_0 + \hbar f_1 + \hbar w$ to transform the equation $\FF (x, \hbar, z) = 0$ into $w = \hbar \GG (x, \hbar, w)$.
By \autoref{211124143506}, this equation has a unique holomorphic solution $w = g(x,\hbar)$ on $X_0 \times S_0$ for some sectorial subdomain $S_0 \subset S$ still with opening $A$ and admitting a uniform Gevrey asymptotic expansion as $\hbar \to 0$ along $\bar{A}$.
Finally, we define $f \coleq f_0 + \hbar f_1 + \hbar g$ which is readily seen to have all the desired properties.
\end{proof}

\section{Linear Algebra in Gevrey Asymptotic Families}
\label{211209190601}

In this section, we provide an application of our main theorem to the study of asymptotic families of holomorphic matrices over a sector.
Namely, we prove the following theorem, from which \autoref{211209190924} follows immediately as a special case.

\begin{thm}[{Jordan Decomposition in Gevrey Asymptotic Families}]{211210093539}
\mbox{}\\
Fix a domain $X \subset \Complex^d_x$ and a point $x_0 \in X$.
Let $S \subset \Complex_\hbar$ be a sectorial domain at the origin and opening arc $A$ with opening angle $|A| = \pi$.
Let $\AA = \AA (x, \hbar)$ be a holomorphic $n\!\!\times\!\!n$-matrix on $X \times S$ which admits a uniform Gevrey asymptotic expansion
\vspace{-5pt}
\eqntag{\label{211210151235}
	\AA (x, \hbar) \simeq \hat{\AA} (x, \hbar)
\quad
\text{as $\hbar \to 0$ along $\bar{A}$, unif. $\forall x \in X$\fullstop}
}
Suppose that the distinct eigenvalues of its $\hbar$-leading-order part $\AA_{00} \coleq \AA_0 (x_0)$ at the point $x_0$ are $a_1, \ldots, a_m \in \Complex$ with respective multiplicities $n_1, \ldots, n_m$.
Let $\PP_{00}$ be a constant invertible $n\!\!\times\!\!n$-matrix that puts $\AA_{00}$ into a Jordan normal form:
\eqn{
	\PP_{00} \AA_{00} \PP_{00}^{-1} 
		= \Lambda_{00} 
		\coleq \diag \big( a_1 \II_{n_1} + \NN_1, \ldots, a_m \II_{n_m} + \NN_m \big)
\fullstop{,}
}
where $\II_{n_i}$ is the identity $n_i\!\!\times\!\!n_i$-matrix and $\NN_i$ is a nilpotent $n_i\!\!\times\!\!n_i$-matrix containing zeros in all positions except those in the first superdiagonal, which may contain either zeros or ones.

Then there is a subdomain $X_0 \subset X$ containing $x_0$ and a sectorial subdomain $S_0 \subset S$ with the same opening $A$ such that there is an invertible $n\!\!\times\!\!n$-matrix $\PP = \PP (x, \hbar)$ on $X_0 \times S_0$ that admits a uniform Gevrey asymptotic expansion
\eqntag{\label{211210140824}
	\PP (x, \hbar) \simeq \hat{\PP} (x, \hbar) = \sum_{k=0}^\infty \PP_{k} (x) \hbar^k
\quad
\text{as $\hbar \to 0$ along $\bar{A}$, unif. $\forall x \in X_0$\fullstop{,}}
}
such that $\PP_0 (x_0) = \PP_{00}$ and which block-diagonalises the matrix $\AA$:
\eqntag{\label{211210140828}
	\PP \AA \PP^{-1} = \Lambda = \diag \big( \Lambda_1, \ldots, \Lambda_m \big)
\fullstop{,}
}
where each $\Lambda_i = \Lambda_i (x, \hbar)$ is an $n_i \!\times\! n_i$-matrix which admits a uniform Gevrey asymptotic expansion
\eqntag{
	\Lambda_i (x, \hbar) \simeq \hat{\Lambda}_i (x, \hbar) = \sum_{k=0}^\infty \Lambda_{i,k} (x) \hbar^k
\quad
\text{as $\hbar \to 0$ along $\bar{A}$, unif. $\forall x \in X_0$\fullstop{,}}
}
with $\Lambda_{i,0} (x_0) = a_i \II_{n_1} + \NN_i$.
Furthermore, the transformation $\PP$ is the uniform Borel resummation of its asymptotic power series $\hat{\PP}$ in the direction $\theta$ that bisects the arc $A$: for all $(x,\hbar) \in X_0 \times S_0$,
\eqntag{\label{211210151220}
	\PP = \cal{S}_\theta \big[ \, \hat{\PP} \, \big]
\fullstop
}
\end{thm}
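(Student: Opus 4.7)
The strategy is to recast the block-diagonalisation problem as a direct application of the Gevrey Asymptotic Implicit Function Theorem (\autoref{211125100306}). As a preliminary step, I would conjugate $\AA \mapsto \PP_{00} \AA \PP_{00}^{-1}$ by the constant matrix $\PP_{00}$; this is still a uniformly Gevrey asymptotic family on $X \times S$ and reduces the problem to the case $\AA_0(x_0) = \Lambda_{00} = \diag(a_1 \II_{n_1} + \NN_1, \ldots, a_m \II_{n_m} + \NN_m)$. The transformation $\tilde{\PP}$ constructed below will then be post-multiplied by $\PP_{00}$ at the end so that $\PP_0(x_0) = \PP_{00}$.

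I would next parametrise the sought $\tilde{\PP}$ in block form relative to the partition $n = n_1 + \cdots + n_m$, fixing each diagonal block $\tilde{P}_{ii} = \II_{n_i}$ and treating the off-diagonal blocks $\tilde{P}_{ij}$ ($i \neq j$) as unknowns. Writing $\AA = (A_{ij})$ and $\Lambda = \diag(\Lambda_1, \ldots, \Lambda_m)$ in the same block structure, the intertwining condition $\tilde{\PP} \AA = \Lambda \tilde{\PP}$ yields on the diagonal blocks $\Lambda_i = A_{ii} + \sum_{k \neq i} \tilde{P}_{ik} A_{ki}$, which expresses each $\Lambda_i$ in terms of $\AA$ and the off-diagonal $\tilde{P}_{ij}$. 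Substituting back into the off-diagonal block equations gives, for every $i \neq j$,
\begin{equation*}
\FF^{ij} \coleq A_{ij} + \tilde{P}_{ij} A_{jj} - A_{ii} \tilde{P}_{ij} + \sum_{k \neq i, j} \tilde{P}_{ik} A_{kj} - \sum_{k \neq i} \tilde{P}_{ik} A_{ki} \tilde{P}_{ij} = 0.
\end{equation*}
Collecting the scalar entries of all $\tilde{P}_{ij}$ into a single vector $z \in \Complex^N$ with $N \coleq n^2 - \sum_i n_i^2$ produces a holomorphic map $\FF : X \times S \times \Complex^N \to \Complex^N$ which is polynomial (quadratic) in $z$ and inherits the uniform Gevrey asymptotic expansion in $\hbar$ from $\AA$, locally uniformly in $z$.

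To apply \autoref{211125100306} at the candidate leading-order solution $z_0 = 0$, I verify its two hypotheses. Since $A_{ij, 0}(x_0) = 0$ for $i \neq j$ and $A_{ii, 0}(x_0) = a_i \II_{n_i} + \NN_i$, direct substitution gives $\FF_0(x_0, 0) = 0$. The Jacobian $\del \FF_0 / \del z$ at $(x_0, 0)$ is block-diagonal with respect to the index pair $(i,j)$: every cross-derivative $\del \FF^{ij} / \del \tilde{P}_{k\ell}$ vanishes there, because either $k \neq i$ (so $\tilde{P}_{k\ell}$ does not appear in $\FF^{ij}$ at all) or $k = i$ with $\ell \neq j$ (so the derivative reduces to $A_{\ell j, 0}(x_0) = 0$ since $\ell \neq j$). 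The diagonal block at $(i,j)$ is the Sylvester operator
\begin{equation*}
\cal{L}_{ij} : X \longmapsto X(a_j \II_{n_j} + \NN_j) - (a_i \II_{n_i} + \NN_i) X
\end{equation*}
on $n_i \!\times\! n_j$ matrices, whose spectrum is the singleton $\{a_j - a_i\}$, because the eigenvalues of $X \mapsto AX - XB$ are the differences of eigenvalues of $A$ and $B$, and here both $a_i \II_{n_i} + \NN_i$ and $a_j \II_{n_j} + \NN_j$ have only the repeated eigenvalues $a_i$ and $a_j$ respectively. Since $a_i \neq a_j$ by hypothesis, each $\cal{L}_{ij}$, and hence the full Jacobian, is invertible. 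This Sylvester invertibility is the essential algebraic input and the only step not handed to us by \autoref{211125100306}.

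Applying \autoref{211125100306} then produces subdomains $X_0 \subset X$ and $S_0 \subset S$ (the latter with the same opening $A$), and a unique holomorphic solution from which the full $\tilde{\PP}$ is assembled, admitting a uniform Gevrey asymptotic expansion along $\bar{A}$ and equal to the uniform Borel resummation of its asymptotic series in the bisecting direction $\theta$, with $\tilde{\PP}_0(x_0) = \II_n$. After possibly shrinking the domains, $\tilde{\PP}$ remains invertible by continuity from $\tilde{\PP}_0(x_0) = \II_n$. Setting $\PP \coleq \tilde{\PP} \PP_{00}$ undoes the preliminary conjugation and yields the required transformation with $\PP_0(x_0) = \PP_{00}$, while each $\Lambda_i = A_{ii} + \sum_{k \neq i} \tilde{P}_{ik} A_{ki}$ inherits the uniform Gevrey expansion and Borel resummability from $\tilde{\PP}$ and $\AA$, with leading-order value $a_i \II_{n_i} + \NN_i$ at $x_0$. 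The additional assertions of \autoref{211209190924} on eigenvalues and eigenvectors in the diagonalisable case then follow at once: the eigenvalues are the scalar diagonal entries of $\Lambda$, and an eigenbasis consists of appropriate columns of $\PP^{-1}$, which shares the Gevrey and Borel-resummation properties of $\PP$ because matrix inversion is holomorphic in a neighbourhood of the invertible leading-order value $\PP_{00}$.
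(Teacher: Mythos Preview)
Your argument is correct and complete. It differs from the paper's proof in organisation rather than in the essential algebraic input (Sylvester invertibility when the leading-order eigenvalues are disjoint).

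The paper proceeds by repeated bisection: it first block-diagonalises $\AA_0(x)$ holomorphically over a neighbourhood of $x_0$ (citing standard theory, not just at the single point $x_0$), then splits the eigenvalues into two groups $\{a_1,\ldots,a_p\}$ and $\{a_{p+1},\ldots,a_m\}$, searches for a $2\times 2$-block transformation $\bigl(\begin{smallmatrix}\II & \SS\\ \TT & \II\end{smallmatrix}\bigr)$, and obtains two \emph{decoupled} quadratic matrix equations for $\SS$ and $\TT$. Because the preliminary holomorphic diagonalisation of $\AA_0$ kills the off-diagonal leading-order blocks identically near $x_0$ (not merely at $x_0$), each equation is already in the standard form $w = \hbar\,\GG(x,\hbar,w)$, and the paper invokes the more specialised \autoref{211124143506} directly. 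This two-block step is then iterated until all $m$ blocks are separated.

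Your route is more economical: you conjugate only by the constant $\PP_{00}$, keep the full $m\times m$ block structure, and apply \autoref{211125100306} once to the coupled system in all off-diagonal blocks simultaneously. The price is that your equations no longer decouple and the leading-order off-diagonal entries $A_{ij,0}(x)$ need not vanish away from $x_0$, so you genuinely need the full \autoref{211125100306} rather than the standard-form \autoref{211124143506}; but this is precisely what that theorem is for, and your verification that the Jacobian at $(x_0,0)$ is block-diagonal with invertible Sylvester blocks is clean and correct. The paper's iterative scheme keeps each step in a form closer to the standard-form lemma and makes the induction on $m$ explicit, at the cost of an extra preliminary holomorphic reduction of $\AA_0$ and a recursion.
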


\begin{proof}
Using standard theory (see, e.g., \cite[\S25.2]{MR0460820}), we can find a holomorphic invertible matrix $\PP_0 (x)$ on a domain $X_0 \subset X$ such that
\eqn{
	\PP_0 \AA_0 \PP_0^{-1} = \diag (\Lambda_{1,0}, \ldots, \Lambda_{m,0})
}
and $\PP_0 (x_0) = \PP_{00}$ where each $\Lambda_{i,0} = \Lambda_{i,0} (x)$ is a holomorphic $n_i\!\times\!n_i$-matrix on $X_0$ with the property that $\Lambda_{i,0} (x_0) = a_i \II_{n_1} + \NN_i$.
To simplify notation, let us assume that the leading-order matrix $\AA_{0}$ has already been diagonalised over $X_0$, so
\eqn{\label{211210145727}
	\PP_0 = \II
\qtext{and}
	\AA_{0} = \diag \big( \Lambda_{1,0}, \ldots, \Lambda_{m,0} \big) = \Lambda_0
\fullstop
}
Our goal is to find a holomorphic matrix $\PP = \PP (x, \hbar)$ whose leading order is $\PP_0$ and a holomorphic block-diagonal matrix $\Lambda = \diag(\Lambda_1, \ldots, \Lambda_m)$, with blocks $\Lambda_{i} = \Lambda_{i} (x, \hbar)$ of size $n_i\!\times\!n_i$, whose leading order is $\Lambda_0$, such that
\eqntag{\label{211210150242}
	\PP \AA = \Lambda \PP
\fullstop
}

Let us break up the eigenvalues $a_1, \ldots, a_m$ arbitrarily into two separate groups.
Pick any $p \in \set{1, \ldots, m}$ and let
\eqntag{\label{211210150532}
	\Lambda'_0 \coleq \diag (\Lambda_{1,0}, \ldots, \Lambda_{p,0})
\qtext{and}
	\Lambda''_0 \coleq \diag (\Lambda_{p+1,0}, \ldots, \Lambda_{m,0})
\fullstop
}
Note that $\Lambda'_0$ and $\Lambda''_0$ have no eignvalues in common.
Put $n' \coleq n_1 + \ldots + n_p$ and $n'' \coleq n_{p+1} + \ldots + n_m$.
Then we block-partition the matrices $\Lambda$ and $\AA$ accordingly:
\eqntag{\label{211210150529}
	\Lambda =
		\left[
			\begin{array}{c|c}
			\Lambda' & 0 \\
			\hline
			0 & \Lambda''
			\end{array}
		\right]
\qqtext{and}
	\AA 
	=
		\left[
			\begin{array}{c|c}
			\AA_{11} & \AA_{12} \\
			\hline
			\AA_{21} & \AA_{22}
			\end{array}
		\right]
	=
		\left[
			\begin{array}{c|c}
			n' \times n' & n' \times n'' \\
			\hline
			n'' \times n' & n'' \times n''
			\end{array}
		\right]
\fullstop{,}
}
where we have indicated the sizes of the blocks $\AA_{ij}$.
Inspired by techniques in \cite{MR0096016,MR0214869,MR0245929}, we search for $\PP$ in the following block-matrix form:
\eqntag{\label{211210151144}
	\PP =
		\left[
			\begin{array}{c|c}
			\II_{n'} & \SS \\
			\hline
			\TT & \II_{n''}
			\end{array}
		\right]
	=
		\left[
			\begin{array}{c|c}
			n' \times n' & n' \times n'' \\
			\hline
			n'' \times n' & n'' \times n''
			\end{array}
		\right]
\fullstop{,}
}
where $\II_{n'}, \II_{n''}$ are respectively the identity $n'\!\times\!n'$- and $n''\!\times\!n''$-matrices.
Substituting the block-partitions \eqref{211210150529} and the ansatz \eqref{211210151144} into equation \eqref{211210150242} yields four conditions:
\eqntag{\label{211210151125}
\begin{aligned}
	\AA_{11} + \SS \AA_{21} &= \Lambda'
\fullstop{;}
&	\qqquad
	\AA_{12} + \SS \AA_{22} &=  \Lambda' \SS
\fullstop{;}
\\	\AA_{22} + \TT \AA_{12} &= \Lambda''
\fullstop{;}
&	\qqquad
	\AA_{21} + \TT \AA_{11} &= \Lambda'' \TT
\fullstop
\end{aligned}
}
Matrices $\Lambda', \Lambda''$ can be eliminated from the two equations on the right.
This leads to two uncoupled matrix quadratic equations for $\SS$ and $\TT$:
\eqntag{\label{211210151851}
	\AA_{12} + \SS \AA_{22} - \AA_{11} \SS - \SS \AA_{21} \SS = 0
\qtext{and}
	\AA_{21} + \TT \AA_{11} - \AA_{22} \TT - \SS \AA_{12} \TT = 0
~\GREY{.}
}
Let us focus on solving the equation for $\SS$.
Observe that its leading-order in $\hbar$ is simply
\eqntag{\label{211210151909}
	\SS_0 \Lambda''_0 - \Lambda'_0 \SS_0 = 0
\fullstop
}
A simple but remarkable fact from linear algebra (see, e.g., \cite[Theorem 4.1]{MR0460820}) says that this equation possess solutions other than $\SS_0 = 0$ if and only if $\Lambda'_0$ and $\Lambda''_0$ have at least one eigenvalue in common, which is contrary to how the matrices $\Lambda'_0$ and $\Lambda''_0$ were defined.
Thus, $\SS_0 = 0$.

Now, put $\NN \coleq n'n''$, and let $w = (w_1, \ldots, w_\NN)$ be the $\NN$-dimensional vector whose components are the entries of $\SS$ in some order.
Then the quadratic equation \eqref{211210151851} for $\SS$ can be written in the form
\eqntag{\label{211210152151}
	w = \hbar \GG (x, \hbar, w)
}
where $\GG$ is a holomorphic map $X_0 \times S \times \Complex_w^\NN \to \Complex^\NN$ which is quadratic in the components of $w$.
By the Gevrey Asymptotic Implicit Function Theorem (\autoref{211125100306}) (or more specifically by \autoref{211124143506}), there is a sectorial subdomain $S_0 \subset S$ with the same opening $A$ such that there is a unique holomorphic map $g : X_0 \times S_0 \to \Complex^\NN$ which admits a uniform Gevrey asymptotic expansion as $\hbar \to 0$ along $\bar{A}$ and such that $g (x,\hbar) = \hbar \GG \big(x, \hbar, g(x,\hbar) \big) = 0$.
This implies the existence and uniqueness of a holomorphic matrix $\SS = \SS (x, \hbar)$ on $X_0 \times S_0$ satisfying \eqref{211210151851} and admitting a uniform Gevrey asymptotic expansion as $\hbar \to 0$ along $\bar{A}$.

Using exactly the same argument, we can derive a unique solution $\TT = \TT (x, \hbar)$ of \eqref{211210151851} on $X_0 \times S_0$ at the expense of only possibly having to shrink the radial size of $S_0$ (but not the opening $A$).
As a result, we have found a unique transformation $\PP$ on $X_0 \times S_0$ defined by \eqref{211210151144} and a unique block-diagonal matrix $\Lambda = \diag (\Lambda', \Lambda'')$ on $X_0 \times S_0$ defined by the two equations on the left in \eqref{211210151125}, which satisfy \eqref{211210150242} and have the desired asymptotic properties.

If the total number of distinct eigenvalues $m$ is $2$, then we can proceed no further: $\Lambda, \PP$ are the desired matrices for the assertions of the theorem.
Otherwise, the procedure outlined above should now be iterated to finally find the matrix $\PP$ that brings $\AA$ to the desired form $\Lambda$ from \eqref{211210140828}.
For example, in the next step after we have found the matrices $\Lambda', \Lambda''$ above, we may search for a transformation that further block-diagonalises $\Lambda'$ (that is, if $\Lambda'_0$ has at least two distinct eigenvalues; otherwise, proceed to block-diagonalise $\Lambda''$).
We may break the eigenvalues $a_1, \ldots, a_p$ up further into two groups and transform $\Lambda$ from \eqref{211210150529} to a block-diagonal matrix $\tilde{\Lambda} = \diag (\tilde{\Lambda}', \tilde{\Lambda}'', \Lambda'')$ by searching for a transformation in the form
\eqn{
	\left[
	\begin{array}{cc|c}
		\II & \multicolumn{1}{|c|}{\SS} & \\
		\cline{1-2}
	 	\TT & \multicolumn{1}{|c|}{\II} & \\
	 	\hline
	 	& & \II
	\end{array}
	\right]
\fullstop
\tag*{\qedhere}
}
\end{proof}

\begin{rem}{211214154402}
Under the assumptions of \autoref{211209190924}, we can use a slightly different argument to the one used to prove \autoref{211210093539} in order to show that the eigenvalues $\lambda_1, \ldots, \lambda_n$ of $\AA$ are well-defined and have the asserted properties.
Indeed, the characteristic polynomial of $\AA$ is a holomorphic function $\FF = \FF (x, \hbar, z)$ on $X \times S \times \Complex_z$, which is a polynomial in $z$ whose coefficients admit a uniform Gevrey asymptotic expansion as $\hbar \to 0$ along $\bar{A}$.
Its leading-order part $\FF_0 = \FF_0 (x, z)$ is the characteristic polynomial of the leading-order part $\AA_0$, so $\FF_0 (x_0, a_i) = 0$ for each $i = 1, \ldots, n$.
By assumption, the eigenvalues of $\AA_{00}$ are distinct, which means the discriminant of the polynomial $\FF_0$ at $x = a$ is nonzero.
As a result, the derivative $\del \FF_0 / \del z$ is nonzero at each $(x_0, a_i)$.
By the Gevrey Asymptotic Implicit Function Theorem (\autoref{211125100306}, or more specifically \autoref{211217121003}), there is a subdomain $X_0 \subset X$ containing $x_0$ and a sectorial subdomain $S_0 \subset S$ with the same opening $A$ such that there are unique holomorphic functions $\lambda_i = \lambda_i (x, \hbar)$ on $X_0 \times S_0$ that admit uniform Gevrey asymptotic expansions \eqref{211209195205} with $\hbar$-leading-orders satisfying $\lambda_{i,0} (x_0) = a_i$.	
\end{rem}

\begin{appendices}
\appendixsectionformat

\section{Background Information}
\label{211215112252}

\subsection{Gevrey Asymptotics}
\label{211215123326}

\paragraph{}
A \dfn{sectorial domain} at the origin in $\Complex_\hbar$ is a simply connected domain $S \subset \Complex_\hbar^\ast = \Complex_\hbar \setminus \set{0}$ whose closure $\bar{S}$ in the real-oriented blowup $[\Complex_\hbar : 0]$ intersects the boundary circle $\Sphere^1$ in a closed arc $\bar{A} \subset \Sphere^1$ with nonzero length.
The open arc $A$ is called the \dfn{opening} of $S$, and its length $|A|$ is called the \dfn{opening angle} of $S$.
A \dfn{Borel disc} of \dfn{diameter} $\RR >0$ is the sectorial domain $S = \set{ \hbar \in \Complex_\hbar ~\big|~ \Re (1/\hbar) > 1/\RR }$.
Its opening is $A = (-\tfrac{\pi}{2}, + \tfrac{\pi}{2})$.
Likewise, a Borel disc bisected by a direction $\theta \in \Sphere^1$ is the sectorial domain $S = \set{ \hbar \in \Complex_\hbar ~\big|~ \Re (e^{i\theta}/\hbar) > 1/\RR }$.
Its opening is $A = (\theta -\tfrac{\pi}{2}, \theta + \tfrac{\pi}{2})$.

\paragraph{}
A holomorphic function $f (\hbar)$ on a sectorial domain $S$ is admits a power series $\hat{f} (\hbar)$ as its \dfn{asymptotic expansion as $\hbar \to 0$ along $A$} (or \dfn{as $\hbar \to 0$ in $S$}) if, for every $n \geq 0$ and every compactly contained subarc $A_0 \Subset A$, there is a sectorial subdomain $S_0 \subset S$ with opening $A_0$ and a real constant $\CC_{n,0} > 0$ such that
\eqntag{\label{200720153758}
	\left| f(\hbar) - \sum_{k=0}^{n-1} f_k \hbar^k \right| \leq \CC_{n,0} |\hbar|^n
}
for all $\hbar \in S_0$.
The constants $\CC_{n,0}$ may depend on $n$ and the opening $A_0$.
If this is the case, we write
\eqntag{\label{200720175735}
	f (\hbar) \sim \hat{f} (\hbar)
\qqqquad
	\text{as $\hbar \to 0$ along $A$\fullstop}
}
If the constants $\CC_{n,0}$ in \eqref{200720153758} can be chosen uniformly for all compactly contained subarcs $A_0 \Subset A$ (i.e., independent of $A_0$ so that $\CC_{n,0} = \CC_n$ for all $n$), then we write
\eqntag{\label{210220160756}
	f (\hbar) \sim \hat{f} (\hbar)
\qqqquad
	\text{as $\hbar \to 0$ along $\bar{A}$\fullstop}
}

\paragraph{}
We also say that the holomorphic function $f$ admits $\hat{f}$ as its \dfn{Gevrey asymptotic expansion as $\hbar \to 0$ along $A$} if the constants $\CC_{n,0}$ in \eqref{200720153758} depend on $n$ like $\CC_0 \MM_0^n n!$.
More explicitly, for every compactly contained subarc $A_0 \Subset A$, there is a sectorial domain $S_0 \subset S$ with opening $A_0 \Subset A$ and real constants $\CC_0, \MM_0 > 0$ which give the bounds
\eqntag{\label{200722160158}
	\left| f(\hbar) - \sum_{k=0}^{n-1} f_k \hbar^k \right| \leq \CC_0 \MM_0^n n! |\hbar|^n
}
for all $\hbar \in S_0$ and all $n \geq 0$.
In this case, we write
\eqntag{\label{210225131044}
	f (\hbar) \simeq \hat{f} (\hbar)
\qqqquad
	\text{as $\hbar \to 0$ along $A$\fullstop}
}
If in addition to \eqref{200722160158}, the constants $\CC_0, \MM_0$ can be chosen uniformly for all $A_0 \Subset A$, then we will write
\eqntag{\label{210225134416}
	f (\hbar) \simeq \hat{f} (\hbar)
\qqqquad
	\text{as $\hbar \to 0$ along $\bar{A}$\fullstop}
}

\paragraph{}
A formal power series $\hat{f} (\hbar) = \sum f_n \hbar^n$ is a \dfn{Gevrey power series} if there are constants $\CC, \MM > 0$ such that for all $n \geq 0$,
\eqntag{\label{200723182724}
	| f_n | \leq \CC \MM^n n!
\fullstop
}

\paragraph{}
All the above definitions translate immediately to cover vector-valued holomorphic functions on $S$ by using, say, the Euclidean norm in all the above estimates.

\subsection{Borel-Laplace Theory}
\label{211215123550}

\paragraph{}
\label{211215140949}
Let $\Xi_\theta \coleq \set{ \xi \in \Complex_\xi ~\big|~ \op{dist} (\xi, e^{i\theta}\Real_+) < \epsilon}$, where $e^{i \theta} \Real_+$ is the real ray in the direction $\theta$.
Let $\phi = \phi (\xi)$ be a holomorphic function on $\Xi_\theta$.
Its \dfn{Laplace transform} in the direction $\theta$ is defined by the formula:
\eqntag{\label{200624181217}
	\Laplace_\theta [\, \phi \,] (x, \hbar)
		\coleq \int\nolimits_{e^{i\theta} \Real_+} \phi (x, \xi) e^{-\xi/\hbar} \dd{\xi}
\fullstop
}
When $\theta = 0$, we write simply $\Laplace$.
Clearly, $\phi$ is Laplace-transformable in the direction $\theta$ if $\phi$ has \dfn{at-most-exponential growth} as $|\xi| \to + \infty$ along the ray $e^{i\theta} \Real_+$.
Explicitly, this means there are constants $\AA, \LL > 0$ such that for all $\xi \in \Xi_\theta$,
\eqntag{\label{211215140908}
	\big| \phi (\xi) \big| \leq \AA e^{\LL |\xi|}
\fullstop
}

\paragraph{}
The convolution product of two holomorphic functions $\phi, \psi$ is defined by the following formula:
\eqntag{
	\phi \ast \psi (\xi)
	\coleq 
	\int\nolimits_0^\xi \phi (\xi - y) \psi (y) \dd{y}
\fullstop{,}
}
where the path of integration is a straight line segment from $0$ to $\xi$.

\paragraph{}
Let $f$ be a holomorphic function on a Borel disc $S = \set{ \hbar \in \Complex_\hbar ~\big|~ \Re (e^{i\theta}/\hbar) > 1/\RR }$.
The (analytic) \dfn{Borel transform} (a.k.a., the \dfn{inverse Laplace transform}) of $f$ in the direction $\theta$ is defined by the following formula:
\eqntag{\label{210617101748}
	\Borel_\theta [\, f \,] (x, \xi)
		\coleq \frac{1}{2\pi i} \oint\nolimits_\theta f(x, \hbar) e^{\xi / \hbar} \frac{\dd{\hbar}}{\hbar^2}
\fullstop{,}
}
where the integral is taken along the boundary of any Borel disc
\eqn{
	S' = \set{ \hbar \in \Complex_\hbar ~\big|~ \Re (e^{i\theta}/\hbar) > 1/\RR' } \subset S
}
of strictly smaller diameter $\RR' < \RR$, traversed anticlockwise (i.e., emanating from the singular point $\hbar = 0$ in the direction $\theta - \pi/2$ and reentering in the direction $\theta + \pi/2$).
When $\theta = 0$, we write simply $\Borel$.

The fundamental fact that connects Gevrey asymptotics and the Borel transform is the following (cf. \cite[Lemma B.5]{MY2008.06492}).
If $f = f(\hbar)$ is a holomorphic function defined on a sectorial domain $S$ with opening angle $|A| = \pi$ and $f$ admits Gevrey asymptotics as $\hbar \to 0$ along the \textit{closed} arc $\bar{A}$, then the analytic Borel transform $\phi (\xi) = \Borel_\theta [f] (\xi)$ defines a holomorphic function on a tubular neighbourhood $\Xi_\theta$ of some thickness $\epsilon > 0$.
Moreover, its Laplace transform in the direction $\theta$ is well-defined and satisfies $\Laplace_\theta [\phi] = f$.

\paragraph{}
Similarly, for a power series $\hat{f} (\hbar)$, the (formal) \dfn{Borel transform} is defined by
\eqntag{
	\hat{\phi} (\xi) =
	\hat{\Borel} [ \, \hat{f} \, ] (\xi)
		\coleq \sum_{k=0}^\infty \phi_k \xi^k
\qtext{where}
	\phi_k \coleq \tfrac{1}{k!} f_{k+1}
\fullstop
}
The fundamental fact that connects Gevrey power series and the formal Borel transform is the following (cf. \cite[Lemma B.8]{MY2008.06492}).
If $\hat{f}$ is a Gevrey power series, then its formal Borel transform $\hat{\phi}$ is a convergent power series in $\xi$.
Furthermore, a Gevrey power series $\hat{f} (\hbar)$ is called a \dfn{Borel summable series} in the direction $\theta$ if its convergent Borel transform $\hat{\phi} (\xi)$ admits an analytic continuation $\phi (\xi) = \rm{AnCont}_\theta [\, \hat{\phi} \,] (\xi)$ to a tubular neighbourhood $\Xi_\theta$ of the ray $e^{i\theta} \Real_+$ with at-most-exponential growth in $\xi$ at infinity in $\Xi_\theta$.
If this is the case, the Laplace transform $\Laplace_\theta [\phi] (\hbar)$ is well-defined and defines a holomorphic function $f(\hbar)$ on some Borel disc $S$ bisected by the direction $\theta$, and we say that $f(\hbar)$ is the \dfn{Borel resummation} in direction $\theta$ of the formal power series $\hat{f} (\hbar)$, and we write
\eqn{
	f (\hbar) = \cal{S}_\theta \big[ \, \hat{f} (\hbar) \, \big] (\hbar)
\fullstop
}
If $\theta = 0$, we write simply $\cal{S}$.
Expressly, we have the following formulas:
\eqn{
	\cal{S}_\theta \big[ \, \hat{f} (\hbar) \, \big] (\hbar)
	= \Laplace_\theta \big[ \, \phi \, \big] (\hbar)
	= \Laplace_\theta \big[ \, \rm{AnCont}_\theta [\, \hat{\phi} \,] \, \big] (\hbar)
\fullstop
}
Thus, Borel resummation $\cal{S}_\theta$ can be seen as a map from the set of (germs of) holomorphic functions $f$ on $S$ with $|A| = \pi$ satisfying \eqref{210225134416} to the set of Borel summable power series.
One of the most fundamental theorems in Gevrey asymptotics and Borel-Laplace theory is a theorem of Nevanlinna \cite[pp.44-45]{nevanlinna1918theorie}\footnote{It was rediscovered and clarified decades later by Sokal \cite{MR558468}; see also \cite[p.182]{zbMATH00797135}, \cite[Theorem 5.3.9]{MR3495546}, as well as \cite[§B.3]{MY2008.06492}.}, which says that this map $\cal{S}_\theta$ is invertible and its inverse is the asymptotic expansion $\ae$.

\subsection{Some Useful Elementary Estimates}

Here, for reference, we collect some elementary estimates used in this paper.
Their proofs are straightforward (see \cite[Appendix C.4]{MY2008.06492}).

\begin{lem}{180824194855}
For any $\RR \geq 0$, any $\LL \geq 0$, and any nonnegative integer $n$,
\eqn{
	\int_0^{\RR} \frac{r^n}{n!} e^{\LL r} \dd{r}
		\leq \frac{\RR^{n+1}}{(n+1)!} e^{\LL \RR}
\fullstop
}	
\end{lem}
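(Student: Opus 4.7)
The estimate is elementary: the plan is to use monotonicity of the exponential factor to pull the worst-case bound outside the integral, and then evaluate the remaining polynomial integral exactly.

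Concretely, on the interval $[0,\RR]$ the function $r \mapsto e^{\LL r}$ is nondecreasing because $\LL \geq 0$, so for every $r \in [0,\RR]$ we have $e^{\LL r} \leq e^{\LL \RR}$. Inserting this bound pointwise into the integrand gives
\eqn{
\int_0^{\RR} \frac{r^n}{n!} e^{\LL r} \dd{r}
\leq e^{\LL \RR} \int_0^{\RR} \frac{r^n}{n!} \dd{r}
= \frac{\RR^{n+1}}{(n+1)!} e^{\LL \RR}
\fullstop
}

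There is no real obstacle here; the only thing to note is that we never need to know the precise dependence on $\LL$, only that $\LL \geq 0$ so that the exponential is monotone. The same argument works verbatim if the integration path is a straight segment in $\Complex$ of length $\RR$, by substituting arc-length parameter, which is precisely how this lemma is used in the convergence estimates of \autoref{211124143506}.
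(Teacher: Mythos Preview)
Your argument is correct and is exactly the one-line monotonicity bound one expects; the paper itself does not spell out a proof but simply labels the estimate as straightforward and defers to \cite[Appendix C.4]{MY2008.06492}, so there is nothing further to compare.
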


\begin{lem}{211205075846}
Let $i_1, \ldots, i_m$ be nonnegative integers and put $n \coleq i_1 + \cdots + i_m$.
Let $f_{i_1}, \ldots, f_{i_j}$ be holomorphic functions on $\Xi \coleq \set{\xi ~\big|~ \op{dist} (\xi, \Real_+) < \epsilon}$ for some $\epsilon > 0$.
If there are constants $\MM_{i_1}, \ldots, \MM_{i_m}, \LL \geq 0$ such that
\eqn{
	\big| f_{i_k} (\xi) \big| \leq \MM_{i_k} \frac{|\xi|^{i_k}}{i_k!} e^{\LL |\xi|}
\rlap{\qquad $\forall \xi \in \Xi$\fullstop{,}}
}
then their total convolution product satisfies the following bound:
\eqn{
	\big| f_{i_1} \ast \cdots \ast f_{i_m} (\xi) \big| 
		\leq \MM_{i_1} \cdots \MM_{i_m} \frac{|\xi|^{i+m-1}}{(i+m-1)!} e^{\LL |\xi|}
\rlap{\qqquad $\forall \xi \in \Xi$\fullstop}
}
\end{lem}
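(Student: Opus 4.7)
The plan is to prove the bound by induction on the number $m$ of convolution factors. The base case $m=1$ is nothing but the hypothesis. The inductive step reduces to the two-factor case, which is the analytical heart of the argument and rests on the beta-integral identity
\[
	\int_0^R \frac{(R-s)^a}{a!}\,\frac{s^b}{b!}\,\mathrm{d}s = \frac{R^{a+b+1}}{(a+b+1)!}
	\qquad (a,b \in \Natural,\ R \geq 0).
\]
This identity can be proved by a single integration by parts or equivalently by the change of variables $s = Ru$ and recognising the Euler beta function $B(a+1,b+1) = a!b!/(a+b+1)!$.

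For the two-factor step, parametrise the straight segment from $0$ to $\xi$ by $y = s\,\xi/|\xi|$ with $s \in [0, |\xi|]$, so that $|y|=s$ and $|\xi-y| = |\xi|-s$. Using the hypothesised bounds for $f_{i_1}$ and $f_{i_2}$, the convolution integral factors as
\[
	\big| f_{i_1} \ast f_{i_2}(\xi) \big|
		\leq \MM_{i_1}\MM_{i_2} e^{\LL|\xi|}
		\int_0^{|\xi|} \frac{(|\xi|-s)^{i_1}}{i_1!}\,\frac{s^{i_2}}{i_2!}\,\mathrm{d}s
		= \MM_{i_1}\MM_{i_2} \frac{|\xi|^{i_1+i_2+1}}{(i_1+i_2+1)!}\,e^{\LL|\xi|},
\]
which is precisely the desired bound for $m=2$ (since $i_1+i_2+m-1 = i_1+i_2+1$). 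The crucial cancellation is that the two exponential factors $e^{\LL(|\xi|-s)}$ and $e^{\LL s}$ combine cleanly to $e^{\LL|\xi|}$, pulling out of the integral.

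For the inductive step, assume the bound for $m$ factors and write
\[
	f_{i_1}\ast\cdots\ast f_{i_{m+1}}
		= \big( f_{i_1}\ast\cdots\ast f_{i_m} \big) \ast f_{i_{m+1}}.
\]
The inductive hypothesis supplies a bound on $g \coleq f_{i_1}\ast\cdots\ast f_{i_m}$ of exactly the same form as the two hypothesised bounds, with polynomial degree $I+m-1$ (where $I = i_1+\cdots+i_m$) and divisor $(I+m-1)!$. Feeding this into the two-factor calculation above with $a = I+m-1$ and $b = i_{m+1}$ yields the bound with polynomial degree $I+m+i_{m+1} = (i_1+\cdots+i_{m+1}) + (m+1) - 1$, completing the induction. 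I do not anticipate any genuine obstacle: once the beta-integral identity is in hand, the argument is a clean induction whose only bookkeeping concerns the exponent shift by $m-1$ due to the factorials.
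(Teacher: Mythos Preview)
Your argument is correct and is exactly the standard one: induction on $m$, with the two-factor step carried by the beta-integral identity and the clean additive splitting $e^{\LL(|\xi|-s)}e^{\LL s}=e^{\LL|\xi|}$ along the straight segment. The paper does not supply a proof here---it simply declares the estimate ``straightforward'' and points to \cite[Appendix C.4]{MY2008.06492}---and your write-up is precisely the argument one would expect to find there.
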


\section[Proof of \autoref*{211125100306}: Scalar Case]
{Proof of \autoref{211125100306}: Scalar Case}
\label{211125162208}

This section is dedicated to proving our main result, the Gevrey Asymptotic Implicit Function Theorem (\autoref{211125100306}), in the scalar case, $\NN = 1$.
The overall strategy of the proof is to first construct a formal solution $z = \hat{f}$ of the equation $\FF (x, \hbar, z) = 0$ using the ordinary Holomorphic Implicit Function Theorem at the leading-order in $\hbar$ and then use a recursion to determine all higher-order corrections.
We then want to apply the Borel resummation to $\hat{f}$ to get $f$.
To do so, we first make a convenient change of variables $z \mapsto w$ in order to put our equation into a certain standard form which is more amenable to the Borel transform.
Applying the Borel transform, we obtain a first-order ordinary differential equation for $\sigma = \Borel [ w ]$, albeit nonlinear and with convolution.
Nevertheless, this ODE is easy to convert into an integral equation, which we then proceed to solve using the method of successive approximations.
To show that this sequence of approximations converges to an actual solution $\sigma$, we give an estimate on the terms of this sequence by employing in an interesting way the ordinary Holomorphic Implicit Function Theorem.
This estimate also allows us to conclude that the Laplace transform $g = \Laplace [\sigma]$ of the obtained solution $\sigma$ exists and defines a holomorphic solution of our equation in standard form.
Undoing the change of variables $z \mapsto w$ sends $g$ to the desired solution $f$.

\subsection{Formal Perturbation Theory}

The starting point is the following classical result whose proof is supplied below for completeness.

\enlargethispage{20pt}
\begin{prop}[Formal Implicit Function Theorem]{211126190846}
\mbox{}\newline
Fix a domain $X \subset \Complex^d_x$ and a point $(x_0, z_0) \in X \times \Complex_z$.
Let
\eqntag{\label{211125163516}
	\hat{\FF} = \hat{\FF} (x,\hbar,z) = \sum_{k=0}^\infty \FF_k (x,z) \hbar^k
}
be a formal power series in $\hbar$ whose coefficients $\FF_k$ are holomorphic functions on $X \times \Complex_z$ such that $\FF_0 (x_0, z_0) = 0$ and the derivative $\del \FF_0 \big/ \del z$ is nonzero at $(x_0, z_0)$.

Then there is subdomain $X_0 \subset X$ containing $x_0$ such that there is a unique formal power series
\eqntag{\label{211125163728}
	\hat{f} = \hat{f} (x,\hbar) = \sum_{n=0}^\infty f_n (x) \hbar^n
}
whose coefficients $f_n$ are holomorphic functions on $X_0$, satisfying 
\eqntag{
	f_0 (x_0) = z_0
\qqtext{and}
	\hat{\FF} \big( x, \hbar, \hat{f} (x,\hbar) \big) = 0
\qquad
	\text{$\forall x \in X_0$\fullstop}
}
In other words, the equation $\hat{\FF} (x, \hbar, z) = 0$ has a unique formal power series solution $z = \hat{f}$ defined near the point $x_0$ and satisfying $f_0 (x_0) = z_0$.
In fact, all the higher-order coefficients $f_k$ are uniquely determined by $f_0$.

In particular, if $S \subset \Complex_\hbar$ is a sectorial domain at the origin, and $\FF$ is a holomorphic function on $X \times S \times \Complex_z$ which admits the power series $\hat{\FF}$ as a uniform asymptotic expansion as $\hbar \to 0$ in $S$, then the equation $\FF (x,\hbar,z) = 0$ has a unique formal power series solution $z = \hat{f}$ near $x_0$ such that $f_0 (x_0) = z_0$.
\end{prop}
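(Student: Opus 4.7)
The plan is to build the coefficients $f_n$ one at a time, leveraging the ordinary Holomorphic Implicit Function Theorem at leading order and then exploiting the fact that at every higher order the unknown coefficient appears linearly with a nonvanishing multiplier.

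First, I apply the Holomorphic Implicit Function Theorem to the analytic equation $\FF_0 (x, z) = 0$ at the point $(x_0, z_0)$. Since $\FF_0 (x_0, z_0) = 0$ and $\del \FF_0 / \del z$ is nonzero there, this produces a subdomain $X_0 \subset X$ containing $x_0$ and a unique holomorphic function $f_0 : X_0 \to \Complex$ such that $f_0 (x_0) = z_0$ and $\FF_0 \big(x, f_0 (x)\big) = 0$ on $X_0$. By continuity and possibly shrinking $X_0$, I may assume that the scalar
\eqn{
	J_0 (x) \coleq \evat{\frac{\del \FF_0}{\del z}}{\big(x, f_0(x)\big)}
}
is nonvanishing on all of $X_0$, and hence a nowhere-zero holomorphic function there.

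Next, I substitute the ansatz \eqref{211125163728} into the formal equation $\hat{\FF} (x, \hbar, z) = 0$ and collect powers of $\hbar$. Writing the double expansion $\hat{\FF} (x, \hbar, z) = \sum_{k,m} \FF_{k,m} (x) \hbar^k z^m$, the coefficient of $\hbar^n$ in $\hat{\FF} \big(x, \hbar, \hat{f}\big)$ becomes a sum over $k + j = n$ of $\FF_{k,m} (x)$ multiplied by the coefficient of $\hbar^j$ in $\hat{f}^m$. At order $n = 0$ this recovers $\FF_0 (x, f_0 (x)) = 0$, which is already solved. For $n \geq 1$, I separate out the contribution coming from $k = 0$ together with $j = n$ in the single factor of $\hat{f}$; this contribution is $\big(\del \FF_0 / \del z\big)\big(x, f_0(x)\big) \, f_n (x) = J_0 (x) f_n (x)$, and all remaining terms involve only $f_0, f_1, \ldots, f_{n-1}$. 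Thus the order-$n$ equation takes the form
\eqntag{
	J_0 (x) f_n (x) + R_n \big( x; f_0, \ldots, f_{n-1} \big) = 0
\fullstop{,}
}
where $R_n$ is a holomorphic function of $x$ determined by the already-constructed $f_0, \ldots, f_{n-1}$ and by the coefficients $\FF_{k,m}$.

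Since $J_0 (x)$ is nowhere zero on $X_0$, I solve uniquely $f_n (x) = - R_n (x) / J_0 (x)$, which is holomorphic on $X_0$. Induction on $n$ then produces the unique formal solution $\hat{f}$, establishing both existence and uniqueness as well as the claim that all higher $f_n$ are determined by $f_0$. The final assertion (asymptotic case) reduces formally to the power series statement: if $\FF$ admits $\hat{\FF}$ as a uniform asymptotic expansion on $X \times S \times \Complex_z$, then the formal equation $\hat{\FF} (x, \hbar, z) = 0$ is well-defined and the preceding argument applies verbatim. There is no genuine obstacle here; the main point requiring care is the bookkeeping that isolates the linear-in-$f_n$ term at each order and identifies its coefficient as $J_0 (x)$, and shrinking $X_0$ so that $J_0$ does not vanish.
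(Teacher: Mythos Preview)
Your proposal is correct and follows essentially the same approach as the paper: apply the Holomorphic Implicit Function Theorem at leading order to obtain $f_0$ on a subdomain $X_0$ where $J_0$ is nonvanishing, then observe that at each order $n \geq 1$ the unknown $f_n$ appears linearly with coefficient $J_0(x)$, allowing a unique recursive solution. The paper's proof differs only in presentation, working out orders $n=1,2$ explicitly and writing down the closed formula for $f_n$, but the underlying argument is identical.
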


\begin{proof}
The proof is a calculation that amounts to plugging the solution ansatz into the equation solving it order-by-order in $\hbar$.
Write the double power series expansion of $\hat{\FF}$ in $(\hbar, z)$ as
\eqntag{\label{211126144357}
	\hat{\FF} (x, \hbar, z) = \sum_{k=0}^\infty \sum_{m=0}^\infty \FF_{km} (x) \hbar^k z^m
\fullstop
}

\textsc{Step 1: Expand order-by-order.}
Plugging the solution ansatz \eqref{211125163728} into the equation $\hat{\FF} (x, \hbar, z) = 0$ yields:
\eqntag{\label{211204115845}
	\sum_{n=0}^\infty \sum_{m=0}^\infty \sum_{k=0}^n 
		\sum_{i_1 + \ldots + i_m = n-k}
		\FF_{km} f_{i_1} \cdots f_{i_m} \hbar^n
	= 0
\fullstop
}
Now we solve it order-by-order in $\hbar$.

\textsc{Step 2: Leading-order part.}
At order $n = 0$, we get:
\eqntag{\label{211126145436}
	\FF_0 (x, f_0) = \sum_{m=0}^\infty \FF_{0m} (x) f_0^m = 0
\fullstop
}
By the Holomorphic Implicit Function Theorem, there is a domain $X_0 \subset X$ containing $x_0$ such that there is a unique holomorphic function $f_0 : X_0 \to \Complex$ that satisfies \eqntag{\label{211127154830}
	f_0 (x_0) = z_0 
	\qqtext{and}
	\FF_0 \big(x, f_0 (x) \big) = 0
	\qqquad
	\text{$\forall x \in X_0$\fullstop}
}
In fact, the domain $X_0$ can be chosen so small that
\eqntag{\label{211126150503}
	\JJ_0 = \JJ_0 (x) \coleq \evat{\frac{\del \FF_0}{\del z}}{\big(x, f_0 (x)\big)} = \sum_{m=0}^\infty m \FF_{0m} f_0^{m-1} \neq 0
	\qqquad
	\text{$\forall x \in X_0$\fullstop}
}

\textsc{Step 3: Subleading-order part.}
For clarity, let us examine equation \eqref{211204115845} at low orders in $\hbar$.
At order $n = 1$, it yields:
\eqns{
	0 =
	\sum_{m=0}^\infty
		\left(
			\sum_{i_1 + \ldots + i_m = 1}
				\FF_{0m} f_{i_1} \cdots f_{i_m}
			+
				\FF_{1m} f_0^m
		\right)
	&= \sum_{m=0}^\infty
		\Big(
			m \FF_{0m} f_0^{m-1} f_1
			+
				\FF_{1m} f_0^m
		\Big)
\\
	&= \left( \sum_{m=0}^\infty m \FF_{0m} f_0^{m-1} \right) f_1 + \sum_{m=0}^\infty \FF_{1m} f_0^m
\\
	&= \JJ_0 f_1 + \sum_{m=0}^\infty \FF_{1m} f_0^m
\fullstop
}

\vspace{-10pt}\enlargethispage{15pt}

Therefore, we are able to solve for $f_1$ uniquely:
\eqntag{\label{211126151910}
	f_1 \coleq -\JJ^{-1}_0 \sum_{m=0}^\infty \FF_{1m} f_0^m
\fullstop\vspace{-10pt}
}

Similarly, at order $n = 2$ in $\hbar$, we find:
\vspace{-10pt}
\eqns{
	0 
	&=
	\sum_{m=0}^\infty
		\left(
			\GREEN{\sum_{i_1 + \ldots + i_m = 2}
				\FF_{0m} f_{i_1} \cdots f_{i_m}}
			+ \BLUE{\sum_{i_1 + \ldots + i_m = 1}
				\FF_{1m} f_{i_1} \cdots f_{i_m}}
			+ \FF_{m2} f_0^m
		\right)
\\
	&=
	\sum_{m=0}^\infty
		\left(
			\GREEN{
				m \FF_{0m} f_0^{m-1} f_2 
				+ 
				\sum_{i_1 + \ldots + i_m = 2}
					\FF_{0m} f_{i_1} \cdots f_{i_m}
			}
			+ \BLUE{m \FF_{1m} f_0^{m-1} f_1}
			+ \FF_{m2} f_0^m
		\right)
\fullstop
}
Thus, once again, we are able to solve this equation for $f_2$ uniquely:
\eqntag{\label{211126152754}
	f_2 \coleq -\JJ^{-1}_0 \sum_{m=0}^\infty
		\left(
			\sum_{i_1 + \ldots + i_m = 2}
			\FF_{0m} f_{i_1} \cdots f_{i_m}
			+ m \FF_{1m} f_0^{m-1} f_1
			+ \FF_{m2} f_0^m
		\right)
\fullstop
}

\textsc{Step 4: Inductive step.}
More generally, at order $n \geq 1$ in $\hbar$, when we have already solved uniquely for functions $f_0, \ldots, f_{n-1}$, equation \eqref{211204115845} yields:
\eqns{\label{211126153030}
	0
	&=
	\sum_{m=0}^\infty \sum_{k=0}^n 
		\sum_{i_1 + \ldots + i_m = n-k}
		\FF_{km} f_{i_1} \cdots f_{i_m}
\\	&=
	\left(\sum_{m=0}^\infty m \FF_{0m} f_0^{m-1} \right) f_n
	+ \sum_{m=0}^\infty \sum_{k=0}^n 
		~\sum_{i_1 + \ldots + i_m = n-k}
		\FF_{km} f_{i_1} \cdots f_{i_m}
\fullstop{,}
}
which has a unique solution $f_n$ given by formula
\eqntag{\label{211127154827}
	f_n \coleq -\JJ^{-1}_0 \sum_{m=0}^\infty \sum_{k=0}^n 
		~\sum_{\substack{0 \leq i_1, \ldots, i_m \leq n - 1 \\ i_1 + \ldots + i_m = n-k}}
		\FF_{km} f_{i_1} \cdots f_{i_m}
\fullstop
\qedhere
}
\end{proof}

\subsection{Transformation to the Standard Form}

Next, we make a convenient change of variables in order to bring the given equation $\FF (x, \hbar, z) = 0$ to a standard form that is more easily handled using the Borel-Laplace method.
This transformation and the standard form are fully determined by the leading-order solution $f_0$ of the equation $\FF_0 (x,z) = 0$ and can always be achieved under our hypotheses.
Namely, we have the following statement.

\begin{lem}{211126191222}
Suppose $\FF$ is a holomorphic function on $X_0 \times S \times \Complex_z$ satisfying the hypotheses of \autoref{211126190846}.
Let $f_0$ and $f_1$ be the leading- and the next-to-leading-order parts of the formal solution $\hat{f}$ defined on $X_0 \subset X$.
Then the change of the unknown variable $z \mapsto w$ given by
\vspace{-5pt}
\eqntag{\label{211127161619}
	z = f_0 + \hbar (f_1 + w)\vspace{-5pt}
}
transforms the equation $\FF (x, \hbar, z) = 0$ into an equation in $w$ of the form
\vspace{-5pt}
\eqntag{\label{211127161740}
	w = \hbar \GG (x, \hbar, w)
\fullstop{,}\vspace{-5pt}
}
where $\GG$ is a holomorphic function uniquely determined by $f_0$ and $\FF$.
Furthermore, if $\FF$ admits a Gevrey asymptotic expansion as $\hbar \to 0$ along $\bar{A}$ uniformly for all $x \in X$ and locally uniformly for all $z \in \Complex_z$ and the domain $X_0$ is chosen so small that $\JJ_0$ (where $\JJ_0$ is the invertible holomorphic function on $X_0$ given by \eqref{211126150503}) is bounded from below on $X_0$, then $\GG$ also admits a Gevrey asymptotic expansion as $\hbar \to 0$ along $\bar{A}$ uniformly for all $x \in X_0$ and locally uniformly for all $z \in \Complex_z$.
Specifically, $\GG$ is defined by
\eqntag{\label{211209143638}
	\GG (x, \hbar, w) \coleq \hbar^{-1} \Big( w - \hbar^{-1}\JJ^{-1}_0 (x) \FF \big(x, \hbar, f_0 (x) + \hbar f_1 (x) + \hbar w\big)\Big)
\fullstop\vspace{-5pt}
}
\end{lem}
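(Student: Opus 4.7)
The plan is to define $\GG$ directly by the formula \eqref{211209143638}—this is essentially forced by solving the substituted equation for $w$—and then to verify that the apparent $\hbar^{-1}$ and $\hbar^{-2}$ singularities of $\GG$ at $\hbar = 0$ are removable. Once this is done, the fixed-point equation $w = \hbar \GG(x,\hbar,w)$ is, as a pure algebraic rearrangement, equivalent to $\FF\bigl(x, \hbar, f_0 + \hbar(f_1 + w)\bigr) = 0$, and the Gevrey assertion reduces to a standard observation about Gevrey functions composed with polynomial-in-$\hbar$ arguments.

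The heart of the argument is a short Taylor expansion of $\FF(x, \hbar, f_0 + \hbar f_1 + \hbar w)$ in $\hbar$ about zero. The constant term is $\FF_0\bigl(x, f_0(x)\bigr)$, which vanishes by the defining property \eqref{211127154830} of $f_0$. The coefficient of $\hbar^1$ is, by the chain rule,
\[
	\FF_1(x, f_0) + \frac{\del \FF_0}{\del z}(x, f_0)(f_1 + w)
		= \FF_1(x, f_0) + \JJ_0 f_1 + \JJ_0 w = \JJ_0 w,
\]
the last equality using the recursion \eqref{211126151910} defining $f_1$, namely $\JJ_0 f_1 = -\FF_1(x, f_0)$. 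Therefore $\hbar^{-2} \JJ_0^{-1} \FF\bigl(x, \hbar, f_0 + \hbar f_1 + \hbar w\bigr) = \hbar^{-1} w + O(1)$, and the two singular contributions to \eqref{211209143638} cancel exactly, leaving $\GG$ holomorphic at $\hbar = 0$.

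For the Gevrey statement, since each $\FF_k$ is entire in $z$, the substitution $z = f_0(x) + \hbar f_1(x) + \hbar w$ (a polynomial in $\hbar$ with holomorphic-in-$x$ coefficients) preserves the uniform Gevrey asymptotic expansion along $\bar{A}$ with the required uniformity in $x$ and local uniformity in $w$. The hypothesis that $\JJ_0$ is bounded from below on $X_0$ gives that $\JJ_0^{-1}$ is a bounded holomorphic function, so multiplication of the expansion by $\hbar^{-2}\JJ_0^{-1}$ followed by cancellation of the singular terms produces a genuine Gevrey asymptotic expansion for $\GG$. The only real bookkeeping obstacle is verifying that the recursion for $f_1$ produces exactly the $\JJ_0 w$ term needed to cancel the remaining $\hbar^{-1}$ singularity; everything else is routine.
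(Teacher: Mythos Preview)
Your proposal is correct and follows essentially the same approach as the paper: define $\GG$ by formula \eqref{211209143638}, verify via Taylor expansion that the $\hbar^{-2}$ and $\hbar^{-1}$ contributions vanish (the first because $\FF_0(x,f_0)=0$, the second because the chain-rule coefficient $\FF_1(x,f_0)+\JJ_0(f_1+w)$ collapses to $\JJ_0 w$ by the recursion \eqref{211126151910}), and then observe that the Gevrey property is preserved under the entire-in-$z$ substitution and multiplication by the bounded $\JJ_0^{-1}$. The only cosmetic difference is that the paper writes out the power-series expansion of $\FF_0\big(f_0+\hbar(f_1+w)\big)^{\OO(\hbar)}$ explicitly rather than invoking the chain rule; your argument is the cleaner of the two.
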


\begin{proof}
The only thing to check is that the righthand side of \eqref{211209143638} has no negative powers in $\hbar$.
In particular, since $\FF$ is an entire function in the variable $z$, identity \eqref{211209143638} makes it obvious that $\GG$ admits a uniform Gevrey asymptotic expansion $\hat{\GG}$ as $\hbar \to 0$ along $\bar{A}$ whenever $\JJ^{-1}_0$ is bounded and $\FF$ admits uniform Gevrey asymptotics.
A more explicit formula for the function $\GG$ is derived in \autoref{211209144324}.

\enlargethispage{10pt}
Let us now verify that $\GG$ has no negative powers in $\hbar$.
Clearly, the leading-order part of $\FF \big(\hbar, f_0 + \hbar f_1 + \hbar w\big)$ is simply $\FF_0 \big(x, f_0 (x) \big)$ which is zero because $f_0$ is the leading-order solution.
Therefore, the righthand side of \eqref{211209143638} is at worst of order $\hbar^{-1}$.
We argue that the next-to-leading-order part of $\FF \big(\hbar, f_0 + \hbar f_1 + \hbar w\big)$ is equal to $\JJ_0 w$.
We expand it as follows:
\vspace{-5pt}
\eqns{
	\Big[ \FF \big(\hbar, f_0 + \hbar f_1 + \hbar w\big) \Big]^{\OO (\hbar)}
	&= 
		\FF_1 (f_0) + \Big[ \FF_0 \big(f_0 + \hbar f_1 + \hbar w\big) \Big]^{\OO (\hbar)}
\\	&= 
		\sum_{m=0}^\infty \FF_{1m} f_0^m 
		+ \left[ \sum_{m=0}^\infty \FF_{0m} \big(f_0 + \hbar (f_1 + w)\big)^m \right]^{\OO (\hbar)}
\\
	&=
		\sum_{m=0}^\infty \FF_{1m} f_0^m 
		+ 	\left[ 
				\sum_{m=0}^\infty \sum_{i + j = m} \!\!\!\!
				\tbinom{m}{i, j}
				\FF_{0m}
				\big( f_0 \big)^{i} 
				\big(f_1 + w\big)^{j} \hbar^{j}
			\right]^{\OO (\hbar)}
\\
	&=
		\sum_{m=0}^\infty \FF_{1m} f_0^m 
		+
			\sum_{m=0}^\infty \sum_{i + j = m} \!\!\!\!
			m \FF_{0m} 
			\big( f_0 \big)^{m-1} 
			\big(f_1 + w\big)
\\
	&=
		\sum_{m=0}^\infty \FF_{1m} f_0^m 
		+
			\JJ_0 \big(f_1 + w\big)
\fullstop	
}
Identity \eqref{211126151910} shows that this expression equals $\JJ_0 w$, as desired.
\end{proof}

The analogue of the Formal Implicit Function Theorem (\autoref{211126190846}) for equations of the form \eqref{211127161740} is especially easy to formulate.

\begin{lem}{211203175809}
Let
\vspace{-5pt}
\eqntag{\label{211208181314}
	\hat{\GG} 
		= \hat{\GG} (x, \hbar, w) 
		\coleq \sum_{k=0}^\infty \GG_k (x, w) \hbar^k
\vspace{-5pt}
}
be any formal power series in $\hbar$ with holomorphic function coefficients on $X_0 \times \Complex_w$ for some domain $X_0 \subset \Complex^d_x$.
Then there is a unique formal power series
\vspace{-5pt}
\eqntag{\label{211203180204}
	\hat{g} = \hat{g} (x,\hbar) = \sum_{n=0}^\infty g_n (x) \hbar^n
\vspace{-5pt}
}
with holomorphic coefficients $g_k : X_0 \to \Complex$, which satisfies $\hat{g} (x, \hbar) = \hat{\GG} \big(x, \hbar, \hat{g} (x, \hbar)\big)$ for all $x \in X_0$.
In other words, the equation $w = \hbar \hat{\GG} (x, \hbar, w)$ has a unique formal power series solution $w = \hat{g}$.

In particular, if $S \subset \Complex_\hbar$ is a sectorial domain at the origin and $\GG$ is a holomorphic function $X_0 \times S \times \Complex_w \to \Complex$ which admits the power series $\hat{\GG}$ as a locally uniform asymptotic expansion as $\hbar \to 0$ in $S$, then the equation $w = \GG (x,\hbar,w) = 0$ has a unique formal power series solution $w = \hat{g} (x, \hbar)$ as above.

Moreover, $g_0 \equiv 0$ and all the higher-order coefficients $g_n$ are given by the following recursive formula:
\vspace{-5pt}
\eqntag{\label{211203180200}
	g_{n+1}
	= \sum_{k = 0}^{n} \sum_{m=0}^{n- k}
		\sum_{i_1 + \cdots + i_m = n-k} \!\!\!\!\!\!\!\!
		\GG_{km} g_{i_1} \cdots g_{i_m}
\fullstop{,}
}
where $\GG_{km} = \GG_{km} (x)$ are the coefficients of the double power series expansion
\eqntag{\label{211208181847}
	\hat{\GG} (x, \hbar, w)
		= \sum_{k=0}^\infty \sum_{m=0}^\infty \GG_{km} (x) \hbar^k w^m
\fullstop
}
\end{lem}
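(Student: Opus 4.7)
The proof will be a direct order-by-order calculation — the scalar analogue of, and considerably simpler than, the vectorial bookkeeping that appears in \autoref{211209163535}. My plan is to substitute the ansatz \eqref{211203180204} into the equation $w = \hbar\hat{\GG}(x,\hbar,w)$, expand the right-hand side as a double power series in $\hbar$ using \eqref{211208181847}, and then read off each $g_n$ by matching coefficients of $\hbar^n$ on both sides.

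First, using the ordinary Cauchy product, I would expand the $m$-th power of the ansatz as
\[
\hat{g}^m = \Big(\sum_{n=0}^\infty g_n \hbar^n\Big)^{\!m} = \sum_{n=0}^\infty \Big(\sum_{i_1+\cdots+i_m=n} g_{i_1}\cdots g_{i_m}\Big)\hbar^n.
\]
Substituting this into $\hbar\hat{\GG}(x,\hbar,\hat{g})$ and regrouping powers of $\hbar$, the equation $w = \hbar\hat\GG(x,\hbar,w)$ becomes
\[
\sum_{N=0}^\infty g_N \hbar^N \;=\; \sum_{N=1}^\infty \Big(\sum_{k=0}^{N-1}\sum_{m=0}^{\infty}\sum_{i_1+\cdots+i_m=N-1-k} \GG_{km}\, g_{i_1}\cdots g_{i_m}\Big)\hbar^N.
\]
Matching the constant term forces $g_0 \equiv 0$, and matching the coefficient of $\hbar^{n+1}$ (i.e.\ setting $N = n+1$) yields precisely the recursion \eqref{211203180200}. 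Thus each $g_n$ is determined uniquely from $g_0$ together with the known coefficients $\GG_{km}$, giving simultaneous existence and uniqueness.

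The only minor bookkeeping point worth recording is the truncation of the outer sum over $m$ in \eqref{211203180200} from $m\in\Natural$ down to $0\le m\le n-k$: once $g_0\equiv 0$, the product $g_{i_1}\cdots g_{i_m}$ vanishes whenever any $i_j=0$, so only compositions with all $i_j\ge 1$ contribute, which forces $m\le i_1+\cdots+i_m = n-k$. Hence every term with $m>n-k$ in the sum above vanishes and the recursion really is a finite sum. The ``in particular'' clause — when $\GG$ is a holomorphic function on $X_0\times S\times\Complex_w$ admitting $\hat\GG$ as a locally uniform asymptotic expansion — is then immediate, since any formal series solution of $w=\GG(x,\hbar,w)$ has the same $\hbar$-coefficients as a solution of $w=\hat\GG(x,\hbar,w)$, and holomorphy of each $g_n$ on $X_0$ follows from holomorphy of the $\GG_{km}$ via the recursion. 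I do not anticipate any obstacle; the argument is purely formal and combinatorial in the scalar setting.
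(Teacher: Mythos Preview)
Your proposal is correct and follows essentially the same approach as the paper: substitute the ansatz into the double power series expansion \eqref{211208181847}, expand $\hat{g}^m$ via the Cauchy product, regroup powers of $\hbar$, read off $g_0\equiv 0$ and the recursion, and then observe that $g_0\equiv 0$ truncates the sum over $m$ to $0\le m\le n-k$. The paper's proof is a two-line version of exactly this computation, so there is nothing to add.
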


\begin{proof}
The proof is a computation very similar to the one in the proof of \autoref{211126190846}.
Plugging the solution ansatz \eqref{211203180204} into the double power series expansion \eqref{211208181847} of $\hat{\GG}$, the righthand side of the equation $w = \hbar \hat{\GG} (x, \hbar, w)$ becomes:
\eqns{
	\hbar \sum_{k=0}^\infty \sum_{m=0}^\infty \GG_{km} \hbar^{k} \left( \sum_{n=0}^\infty g_n \hbar^n \right)^m
		&= \hbar \sum_{n=0}^\infty \sum_{k=0}^\infty \sum_{m=0}^\infty \sum_{i_1 + \cdots + i_m = n}
			\GG_{km} g_{i_1} \cdots g_{i_m} \hbar^{k+n}
\\		
		&= \hbar \sum_{n=0}^\infty \sum_{k=0}^{n} \sum_{m=0}^{n-k} \sum_{i_1 + \cdots + i_m = n - k}
			\GG_{km} g_{i_1} \cdots g_{i_m} \hbar^{n}
\tag*{\qedhere}
\fullstop
}
\end{proof}

\paragraph{Explicit formula for the transformation.}
\label{211209144324}
We can derive a more explicit formula for the holomorphic function $\GG$ from \autoref{211126191222}.
First, let us write $\FF$ as a uniformly convergent power series in $z$:
\eqntag{\label{211127164058}
	\FF (x, \hbar, z) = \sum_{m=0}^\infty \AA_m (x, \hbar) z^m
\fullstop
}
For every $m \in \Natural$, let $\BB_m = \BB_m (x, \hbar)$ be a holomorphic function on $X \times S$ defined by the identity
\eqntag{\label{211127164021}
	\AA_m = \FF_{0m} + \hbar \FF_{1m} + \hbar^2 \BB_m
\fullstop
}
Then we substitute $z = f_0 + \hbar (f_1 + w)$ into $\FF (x, \hbar, z) = 0$ and expand using \eqref{211127164058} and \eqref{211127164021}:
\vspace{-10pt}
\eqns{
	0
	&= \FF (\hbar, f_0 + \hbar f_1 + \hbar w)
\\
	&= \GREEN{\sum_{m=0}^\infty \sum_{i+j+k = m} \!\!\!\! \tbinom{m}{i,j,k} \FF_{0m} f_0^i f_1^j w^k \hbar^{m-i}}
			+ \BLUE{\hbar \sum_{m=0}^\infty \sum_{i+j+k = m} \!\!\!\! \tbinom{m}{i,j,k} \FF_{1m} f_0^i f_1^j w^k \hbar^{m-i}}
\\	&\hspace{6cm}
			+ \hbar^2 \sum_{m=0}^\infty \sum_{i+j+k = m} \!\!\!\! \tbinom{m}{i,j,k} \BB_m f_0^i f_1^j w^k \hbar^{m-i}
\fullstop\vspace{-5pt}
}
Let us split the \GREEN{green} and \BLUE{blue} sums according to $\GREEN{[(i,j,k) = (m,0,0)] + [(i,j,k) = }$ $\GREEN{(m-1,1,0)] + [(i,j,k) = (m-1,0,1)] + [\text{the rest}]}$ and $\BLUE{[(i,j,k) = (m,0,0)] + [\text{the rest}]}	$, respectively.
We leave the black sum alone.
Thus, noting that
\eqn{
	\binom{m}{m,0,0} = 1
\qtext{and}
	\binom{m}{m-1,1,0} = \binom{m}{m-1,0,1} = m
\fullstop{,}
}
we get:
\newpage
\eqns{
	0&= \GREEN{\sum_{m=0}^\infty \FF_{0m} f_0^m 
		+ \left( \sum_{m=0}^\infty m \FF_{0m} f_0^{m-1} \right) f_1 \hbar
		+ \left( \sum_{m=0}^\infty m \FF_{0m} f_0^{m-1} \right) w \hbar}
\\
	&\quad \GREEN{+ \sum_{m=0}^\infty \sum_{\substack{i+j+k = m \\ 0 \leq i \leq m-2}} \!\!\!\! \tbinom{m}{i,j,k} \FF_{0m} f_0^i f_1^j w^k \hbar^{m-i}}
	+ \BLUE{\hbar \sum_{m=0}^\infty \FF_{1m} f_0^m}
\\
	&\quad + \BLUE{\hbar \sum_{m=0}^\infty \sum_{\substack{i+j+k = m \\ 0 \leq i \leq m-1}} \!\!\!\! \tbinom{m}{i,j,k} \FF_{1m} f_0^i f_1^j w^k \hbar^{m-i}}
	+ \hbar^2 \sum_{m=0}^\infty \sum_{i+j+k = m} \!\!\!\! \tbinom{m}{i,j,k} \BB_m f_0^i f_1^j w^k \hbar^{m-i}
\fullstop\vspace{-5pt}
\intertext{Now, the first \GREEN{green} sum vanishes because $f_0$ is a leading-order solution, see \eqref{211126145436}.
Likewise, the second \GREEN{green} sum and the first \BLUE{blue} sum add up to zero because $f_1$ is the next-to-leading-order solution, see \eqref{211126151910}.
In the third \GREEN{green} sum, the factor in front of $w\hbar$ is $-\JJ_0^{-1}$.
Let us also factorise $\hbar^2$ and another $\hbar$ out of the fourth \GREEN{green} and second \BLUE{blue} sums.
Altogether, we obtain:\vspace{-5pt}}
	0&= \GREEN{-\JJ_0^{-1} w\hbar 
		+ \hbar^2 \sum_{m=0}^\infty \sum_{\substack{i+j+k = m \\ 0 \leq i \leq m-2}} \!\!\!\! \tbinom{m}{i,j,k} \FF_{0m} f_0^i f_1^j w^k \hbar^{m-2-i}}
\\
	&\quad + \BLUE{\hbar^2 \sum_{m=0}^\infty \sum_{\substack{i+j+k = m \\ 0 \leq i \leq m-1}} \!\!\!\! \tbinom{m}{i,j,k} \FF_{1m} f_0^i f_1^j w^k \hbar^{m-1-i}}
	+ \hbar^2 \sum_{m=0}^\infty \sum_{i+j+k = m} \!\!\!\! \tbinom{m}{i,j,k} \BB_m f_0^i f_1^j w^k \hbar^{m-i}
\fullstop\vspace{-5pt}
\intertext{Next, it is convenient to rearrange the summations as follows:
\eqn{
	\sum_{m=0}^\infty \sum_{i+j+k = m} \!\!
		= \sum_{m=0}^\infty \sum_{k=0}^m \sum_{i+j = m - k} \!\!
		= \sum_{k=0}^\infty \sum_{m=k}^\infty \sum_{i+j = m - k}
\fullstop
}
Thus, we get:}
\\[-40pt]
	0&= \GREEN{-\JJ_0^{-1} w\hbar}
		+ \hbar^2 \sum_{k=0}^\infty \sum_{m=k}^\infty
			\left(
				\GREEN{\sum_{\substack{i+j = m - k \\ 0 \leq i \leq m-2}} \!\!\!\! \tbinom{m}{i,j,k} \FF_{0m} f_0^i f_1^j \hbar^{m-2-i}}
				+ \!\!\!\! \BLUE{\sum_{\substack{i+j = m - k \\ 0 \leq i \leq m-1}} \!\!\!\! \tbinom{m}{i,j,k} \FF_{1m} f_0^i f_1^j \hbar^{m-1-i}}
			\right.
\\
	&\hspace{9cm}
			\left.
				+ \!\!\!\! \sum_{i+j = m - k} \!\!\!\! \tbinom{m}{i,j,k} \BB_m f_0^i f_1^j \hbar^{m-i}
			\right)
			w^k
\fullstop
}
Multiplying through by $\JJ_0 \hbar^{-1}$ and taking $w$ over to the lefthand side, we obtain the equation $w = \hbar \GG (\hbar, w)$ where $\GG$ is defined as the power series
\vspace{-5pt}
\eqntag{\label{211127164333}
	\GG (x, \hbar, w) \coleq \sum_{k=0}^\infty \CC_k (x, \hbar) w^k
\fullstop{,}
\vspace{-5pt}
}
with coefficients $\CC_k$ given by the following formula:
\vspace{-10pt}
\begin{multline}
	\CC_k
		\coleq \JJ_0 \sum_{m=k}^\infty
			\left(
				\sum_{\substack{i+j = m - k \\ 0 \leq i \leq m-2}} \!\!\!\! \tbinom{m}{i,j,k} \FF_{0m} f_0^i f_1^j \hbar^{m-2-i}
				+ \!\!\!\! \sum_{\substack{i+j = m - k \\ 0 \leq i \leq m-1}} \!\!\!\! \tbinom{m}{i,j,k} \FF_{1m} f_0^i f_1^j \hbar^{m-1-i}
			\right.
\\
			\left.
				+ \!\!\!\! \sum_{i+j = m - k} \!\!\!\! \tbinom{m}{i,j,k} \BB_m f_0^i f_1^j \hbar^{m-i}
			\right)
\fullstop
\end{multline}

\subsection{Gevrey Regularity of the Formal Solution}

Now we show that the formal Borel transform of the formal solution $\hat{f}$ is a convergent power series in the Borel variable $\xi$; that is, the coefficients $f_n$ grow not faster than $n!$.
More precisely, we prove the following proposition.

\begin{prop}[{Gevrey Formal Implicit Function Theorem}]{211127150013}
\mbox{}\\
Assume all the hypotheses of \autoref{211126190846} and suppose in addition that the power series $\hat{\FF}$ is locally uniformly Gevrey on $X \times \Complex_z$.
Then $X_0 \subset X$ can be chosen so small that the formal power series $\hat{f}$ is uniformly Gevrey on $X_0$.
In particular, the formal Borel transform 
\eqntag{
	\hat{\phi} (x, \xi) =
	\hat{\Borel} [ \, \hat{f} \, ] (x, \xi)
		\coleq \sum_{n=0}^\infty \tfrac{1}{n!} f_{n+1} (x) \xi^n
}
is a uniformly convergent power series in $\xi$.
Concretely, if $X_0 \subset X$ is any subset where the function $\JJ_0$ is bounded from below and such that there are $\AA, \BB > 0$ such that $|\FF_k (x, z)| \leq \AA \BB^k k!$ for all $k \geq 0$, uniformly for all $x \in X_0$ and for all $z \in \Complex_z$ with $|z| < \RR$ for some $\RR > 0$, then there are constants $\CC, \MM > 0$ such that
\eqntag{
	\big| f_k (x) \big| \leq \CC \MM^k k!
\qqquad
	\text{$\forall x \in X_0, \forall k$\fullstop}
}
\end{prop}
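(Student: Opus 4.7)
The plan is to mirror the strategy of the vectorial case (\autoref{211209171812}) but with all the multi-index bookkeeping collapsed. First, I would apply \autoref{211126191222} to reduce the problem to the standard form $w = \hbar \GG(x,\hbar,w)$, with $\GG$ locally uniformly Gevrey on $X_0$ provided $\JJ_0$ is bounded from below on $X_0$. By \autoref{211203175809} this transformed equation has a unique formal solution $\hat{g}$ whose coefficients obey the explicit recursion \eqref{211203180200}. Since $\hat{f} = f_0 + \hbar f_1 + \hbar \hat{g}$, it suffices to prove that $\hat{g}$ is uniformly Gevrey on $X_0$.

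Next I would set up majorants. From the Gevrey hypothesis on $\hat{\GG}$, fix constants $\AA, \BB > 0$ such that $|\GG_{km}(x)| \leq \AA \BB^{k+m} k!$ for all $x \in X_0$, $k, m \in \Natural$. The aim is to produce a sequence $\set{\MM_n}_{n \geq 0}$ of nonnegative reals with $\MM_0 = 0$ and
\eqntag{\label{plan-estimate}
    |g_{n+1}(x)| \leq \MM_{n+1}\, n! \qquad \forall x \in X_0,~\forall n \in \Natural
\fullstop
}
Plugging $|g_{i_\ell}| \leq \MM_{i_\ell}(i_\ell - 1)!$ into the recursion \eqref{211203180200} and repeatedly using $(i-1)!(j-1)! \leq (i+j-2)!$ to collapse the factorials, the natural candidate is
\eqntag{\label{plan-MMn}
    \MM_{n+1} \coleq \AA \sum_{k=0}^n \BB^k \sum_{m=0}^{n-k} \BB^m
        \!\!\!\!\!\!\!\!\!
        \sum_{i_1+\cdots+i_m = n-k}^{\phantom{X}} \!\!\!\!\!\!\!\!\!\!
        \MM_{i_1}\cdots \MM_{i_m}
\fullstop
}
A short induction on $n$ then verifies \eqref{plan-estimate}.

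The main step, and the only nontrivial one, is to show $\MM_n \leq \MM^n$ for some $\MM > 0$. Following the trick used in the vectorial proof, I introduce the formal generating series $\hat{p}(t) \coleq \sum_{n \geq 0} \MM_n t^n$ and the convergent auxiliary series $\QQ(t) \coleq \sum_{m \geq 0} \BB^m t^m = (1 - \BB t)^{-1}$. A direct substitution using \eqref{plan-MMn} (comparing coefficients of $t^{n+1}$) shows that $\hat{p}$ satisfies the functional equation
\eqntag{\label{plan-FE}
    \hat{p}(t) = \AA t\, \QQ(t)\, \QQ\bigl(\hat{p}(t)\bigr)
\fullstop
}
Then the holomorphic function $\FF(t, p) \coleq -p + \AA t\, \QQ(t)\, \QQ(p)$ satisfies $\FF(0,0) = 0$ and $\del \FF / \del p|_{(0,0)} = -1 \neq 0$, so the Holomorphic Implicit Function Theorem supplies a unique analytic $p(t)$ near $t=0$ with $p(0)=0$ and $\FF(t,p(t))=0$. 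By uniqueness of the formal solution of \eqref{plan-FE} with $\hat{p}(0)=0$, $\hat{p}$ must coincide with the convergent Taylor expansion of $p(t)$ at $0$, hence $\MM_n \leq \MM^n$ for some $\MM > 0$. Combined with \eqref{plan-estimate}, this yields the Gevrey bound $|f_n(x)| \leq \CC \MM^n n!$.

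The main obstacle is guessing the correct functional equation \eqref{plan-FE} and keeping track of the combinatorics in the recursion for $\MM_n$; once the equation is in hand, the Holomorphic Implicit Function Theorem does the rest. Everything else (local uniform boundedness of $\JJ_0^{-1}$ on a suitably small $X_0$, extraction of the constants $\AA, \BB$ from the Gevrey hypothesis, and the transfer back from $\hat{g}$ to $\hat{f}$) is routine.
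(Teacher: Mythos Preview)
Your proposal is correct and follows essentially the same approach as the paper: reduce to the standard form via \autoref{211126191222}, set up the same majorant recursion \eqref{plan-MMn} for $\MM_{n+1}$ (the paper writes the outer $m$-sum over all of $\Natural$, but since $\MM_0 = 0$ the terms with $m > n-k$ vanish, so your truncated sum agrees), derive the identical functional equation $\hat{p}(t) = \AA t\,\QQ(t)\,\QQ(\hat{p}(t))$, and conclude via the Holomorphic Implicit Function Theorem applied to $\FF(t,p) = -p + \AA t\,\QQ(t)\,\QQ(p)$. The only cosmetic difference is that you phrase the factorial inequality as $(i-1)!(j-1)! \leq (i+j-2)!$ whereas the paper writes $i!j! \leq (i+j)!$; these are of course the same statement.
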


\begin{proof}
Let $X_0 \subset X$ be such that the function $\JJ_0$ is bounded from below on $X_0$.
Then, by \autoref{211126191222}, the proof boils down to proving the following claim.

\textbf{Claim.}
\textit{
Assume all the hypotheses of \autoref{211203175809} and suppose that the power series $\hat{\GG}$ is Gevrey uniformly for all $x \in X_0$ and locally uniformly for all $w \in \Complex_w$.
Then the formal solution $\hat{g}$ is also uniformly Gevrey on $X_0$.
}

Let $\AA, \BB > 0$ be constants such that
\eqntag{\label{211126162722}
	\big| \GG_{km} (x) \big| \leq \AA \BB^{k+m} k!
\qqquad
	\text{$\forall x \in X_0, ~\forall k,m \in \Natural$\fullstop}
}
We will show that there is a constant $\MM > 0$ such that
\eqntag{\label{211126164059}
	\big| g_{n+1} (x) \big| \leq \MM^{n+1} n!
\qqquad
	\text{$\forall x \in X_0, ~\forall n \in \Natural$\fullstop}
}
This bound will be demonstrated in two main steps.
First, we will recursively construct a sequence $\set{\MM_n}_{n=0}^\infty$ of nonnegative real numbers such that
\eqntag{\label{211126164811}
	\big| g_{n+1} (x) \big| \leq \MM_{n+1} n!
\qqquad
	\text{$\forall x \in X_0, ~\forall n \in \Natural$\fullstop}
}
Then we will show that there is a constant $\MM > 0$ such that $\MM_n \leq \MM^n$ for all $n$.

\textsc{Step 1: Construction of $\set{\MM_n}_{n=0}^\infty$.}
Let $\MM_0 \coleq 0$.
We can take $\MM_1 \coleq \AA$ because $g_1 = \GG_{00}$.
Now we use induction on $n$ and formula \eqref{211203180200}, which is more convenient to rewrite as follows:
\eqntag{\label{211204105006}
	g_{n+1}
	= \sum_{m=0}^\infty \sum_{k=0}^n
		\sum_{i_1 + \cdots + i_m = n-k} \!\!\!\!\!\!\!\!
		\GG_{km} g_{i_1} \cdots g_{i_m}
\fullstop
}
Notice that, since $g_0 \equiv 0$, the sum over $i_1, \ldots, i_m$ is empty whenever $m > n - k$, so this expression really is the same as \eqref{211203180200}.
Assume that we have already constructed $\MM_0, \ldots, \MM_{n}$ such that $\big| g_{i} \big| \leq \MM_{i} (i-1)!$ for all $i = 0, \ldots, n$ and all $x \in X_0$.
Then we estimate $g_{n+1}$ using \eqref{211204105006}:
\eqns{
	|g_{n+1}|
		&\leq \sum_{m=0}^\infty \sum_{k=0}^n
		\sum_{i_1 + \cdots + i_m = n-k} \!\!\!\!\!\!\!\!
		\AA \BB^{k+m} k! \MM_{i_1} \cdots \MM_{i_m} (i_1 - 1)! \cdots (i_m - 1)!
\\
		&\leq \sum_{m=0}^\infty \sum_{k=0}^n
			\AA \BB^k  \!\!\!\!\!\!\!\!
		\sum_{i_1 + \cdots + i_m = n-k} \!\!\!\!\!\!\!\!
		\BB^m \MM_{i_1} \cdots \MM_{i_m} n!
}
Here, we used the fact that $i_1 + \cdots + i_m = n-k$ and the inequality $i!j! \leq (i+j)!$.
Thus, we can define
\eqntag{\label{211204111409}
	\MM_{n+1} \coleq \sum_{m=0}^\infty \sum_{k=0}^n
			\AA \BB^k  \!\!\!\!\!\!\!\!
		\sum_{i_1 + \cdots + i_m = n-k} \!\!\!\!\!\!\!\!
		\BB^m \MM_{i_1} \cdots \MM_{i_m}
\fullstop
}

\textsc{Step 2: Construction of $\MM$.}
To see that $\MM_n \leq \MM^n$ for some $\MM > 0$, we argue as follows.
Consider the following pair of power series in an abstract variable $t$:
\eqntag{
	\hat{p} (t) \coleq \sum_{n=0}^\infty \MM_n t^n
\qtext{and}
	\QQ (t) \coleq \sum_{m=0}^\infty \BB^m t^m
\fullstop
}
Notice that $\hat{p} (0) = \MM_0 = 0$ and that $\QQ (t)$ is convergent.
We will show that $\hat{p} (t)$ is also convergent.
The key is the observation that they satisfy the following equation:
\eqntag{\label{211204111930}
	\hat{p} (t) 
		= \AA t \QQ(t) \QQ\big( \hat{p}(t) \big)
		= \AA t \QQ(t) \sum_{m=0}^\infty \BB^m \hat{p}(t)^m
\fullstop
}
This equation was found by trial and error, and it is straightforward to verify directly by substituting the power series $\hat{p}(t)$ and $\QQ(t)$ and comparing the coefficients of $t^{n+1}$ using formula \eqref{211204111409}.

Now, consider the following holomorphic function in two complex variables $(t,p)$:
\eqn{
	\FF (t,p) \coleq - p + \AA t \QQ(t) \QQ(p)
}
It has the following properties:
\eqn{
	\FF (0,0) = 0
\qqtext{and}
	\evat{\frac{\del \FF}{\del p}}{(t,p) = (0,0)} = -1 \neq 0
\fullstop
}
By the Holomorphic Implicit Function Theorem, there exists a unique holomorphic function $p (t)$ near $t = 0$ such that $p (t) = 0$ and $\FF \big(t, p (t)\big) = 0$.
Thus, $\hat{p} (t)$ must be the convergent Taylor series expansion at $t = 0$ for $p(t)$, so its coefficients grow at most exponentially: there is a constant $\MM > 0$ such that $\MM_n \leq \MM^n$.
\end{proof}

\subsection{Exact Perturbation Theory}

Now we show that the convergent Borel transform $\hat{\phi} (x, \xi)$ of the formal solution admits an analytic continuation along a ray in the Borel $\xi$-plane and furthermore its Laplace transform is well-defined.
First, we prove the following lemma.

\begin{lem}{211123193427}
Let $X_0 \subset \Complex^d_x$ be a domain.
Let $S \coleq \set{\hbar ~\big|~ \Re (1/\hbar) > 1/\RR} \subset \Complex_\hbar$ be the Borel disc of some diameter $\RR > 0$.
Recall that the opening of $S$ is $A_+ \coleq (-\pi/2, +\pi/2)$.
Let $\GG = \GG (x, \hbar, w)$ be a holomorphic function on $X_0 \times S \times \Complex_w$ which admits a Gevrey asymptotic expansion
\eqntag{\label{211215174141}
	\GG (x, \hbar, w) \simeq \hat{\GG} (x, \hbar, w)
\quad
\text{as $\hbar \to 0$ along $\bar{A}_+$\fullstop{,}}
}
uniformly for all $x \in X_0$ and locally uniformly for all $w \in \Complex_w$.
Then there is a Borel disc $S_0 \coleq \set{\hbar ~\big|~ \Re (1/\hbar) > 1/\RR_0} \subset S$ of possibly smaller diameter $\RR_0 \in (0, \RR]$ such that there is a unique holomorphic function $g = g(x, \hbar)$ on $X_0 \times S_0$ which admits a uniform Gevrey asymptotic expansion
\eqntag{\label{211206171855}
	g (x, \hbar) \simeq \hat{g} (x, \hbar)
\quad
\text{as $\hbar \to 0$ along $\bar{A}_+$, unif. $\forall x \in X_0$\fullstop{,}}
}
and such that $g(x, \hbar) = \hbar \GG \big(x, \hbar, g(x,\hbar) \big) = 0$ for all $(x,\hbar) \in X_0 \times S_0$.
Furthermore, $g$ is the uniform Borel resummation of $\hat{g}$: for all $(x,\hbar) \in X_0 \times S_0$,
\eqntag{\label{211210124905}
	g (x, \hbar) = \cal{S} \big[ \: \hat{g} \: \big] (x, \hbar)
\fullstop
}
\end{lem}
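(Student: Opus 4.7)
My plan is to follow the same two-part strategy as for the vectorial \autoref{211124143506}: prove uniqueness abstractly from the Gevrey asymptotic condition on an arc of opening $\pi$, and construct existence via the Borel--Laplace method. For uniqueness, if $g, g'$ both satisfy the conclusion, then $g - g'$ is holomorphic on $X_0 \times S_0$ and uniformly Gevrey asymptotic to $0$ as $\hbar \to 0$ along the \textit{closed} arc $\bar A_+$ of opening angle $\pi$, so Nevanlinna's theorem (cited in the excerpt) forces $g \equiv g'$. For existence, I would first expand $\GG(x,\hbar,w) = \sum_{m \geq 0} \AA_m(x,\hbar) w^m$ and rewrite $w = \hbar \GG(x,\hbar,w)$ as $w = \hbar \AA_0 + \hbar \sum_{m \geq 1} \AA_m w^m$. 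Writing $a_m$ for the leading-order of $\AA_m$ and $\alpha_m \coleq \Borel[\AA_m]$ (well-defined and holomorphic on some strip $\Xi$ around $\Real_+$ with at-most-exponential growth, thanks to \eqref{211215174141}), I would then apply the formal Borel transform to get an integral equation
\eqntag{\label{plan-integraleq}
	\sigma = a_0 + \int_0^\xi \left[ \alpha_0 + \sum_{m=1}^\infty \big( a_m \sigma^{\ast m} + \alpha_m \ast \sigma^{\ast m}\big)\right] dt
\fullstop
}

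Next, I would solve \eqref{plan-integraleq} by successive approximations: set $\sigma_0 \coleq a_0$, $\sigma_1 \coleq \int_0^\xi (\alpha_0 + a_1 \sigma_0) \, dt$, and inductively
\eqntag{
	\sigma_n \coleq \int_0^\xi \sum_{m=1}^n \left[ a_m \!\!\!\! \sum_{i_1 + \cdots + i_m = n - m} \!\!\!\! \sigma_{i_1} \ast \cdots \ast \sigma_{i_m} + \alpha_m \ast \!\!\!\! \sum_{i_1+\cdots+i_m = n-m-1} \!\!\!\! \sigma_{i_1} \ast \cdots \ast \sigma_{i_m}\right] dt
\fullstop
}
Then I would check by direct manipulation of the sums (shifting summation indices to absorb the extra factor of $t$ from the integration) that $\sigma \coleq \sum_n \sigma_n$ formally satisfies \eqref{plan-integraleq}, exactly as is done in Step 3 of the vectorial proof but simpler since there are no boldface indices.

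The real work is the convergence estimate, which I expect to be the main obstacle and where I would closely mirror the technique in Step 4 of the proof of \autoref{211124143506}. Starting from the bounds $|a_m| \leq \CC \BB^m$ and $|\alpha_m(\xi)| \leq \CC \BB^m e^{\LL|\xi|}$ on $X_0 \times \Xi$, together with \autoref{180824194855} and \autoref{211205075846} for convolution products of functions with exponential weights, I would inductively construct constants $\MM_n > 0$ such that $|\sigma_n(x,\xi)| \leq \MM_n \frac{|\xi|^n}{n!}e^{\LL|\xi|}$, where
\eqntag{
	\MM_n \coleq \sum_{m=0}^n \CC \BB^m \!\!\!\! \sum_{i_1 + \cdots + i_m = n-m} \!\!\!\! \MM_{i_1} \cdots \MM_{i_m} + \sum_{m=0}^n \CC \BB^m \!\!\!\! \sum_{i_1 + \cdots + i_m = n-m-1} \!\!\!\! \MM_{i_1} \cdots \MM_{i_m}
\fullstop
}
To bound these, I would introduce the majorant power series $\hat p(t) \coleq \sum_n \MM_n t^n$ and $\QQ(t) \coleq \sum_m \CC \BB^m t^m$, and verify by direct term-comparison that $\hat p$ satisfies the functional equation $\hat p(t) = (1+t) \QQ(t\hat p(t))$. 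Applying the ordinary Holomorphic Implicit Function Theorem to $\FF(t,p) \coleq -p + (1+t)\QQ(tp)$ at $(0, \CC)$, where $\partial \FF / \partial p \neq 0$, gives a genuine holomorphic solution $p(t)$ near $0$, so $\hat p(t)$ converges and $\MM_n \leq \DD \MM^n$ for some constants $\DD, \MM > 0$.

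With this estimate, $\sigma$ is uniformly convergent on $X_0 \times \Xi$, defines a holomorphic function there with at-most-exponential growth $|\sigma(x,\xi)| \leq \DD e^{(\MM + \LL)|\xi|}$, and its Taylor series at $\xi = 0$ coincides with $\hat\Borel[\hat g]$ by construction (since plugging the solution ansatz order-by-order into the integral equation reproduces the recursion \eqref{211203180200} satisfied by the $g_n$). Finally I would define $g(x,\hbar) \coleq \Laplace[\sigma](x,\hbar)$, which converges on any Borel disc $S_0$ of diameter $\RR_0 < (\MM + \LL)^{-1}$, automatically satisfies the asymptotic condition \eqref{211206171855}, solves the original equation by reversing the Borel transform, and equals $\cal S[\hat g]$ by definition of Borel resummation, giving \eqref{211210124905}.
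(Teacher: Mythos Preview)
Your proposal is correct and follows essentially the same approach as the paper's own proof: uniqueness via Nevanlinna's theorem, existence via the Borel--Laplace method with the same successive approximations $\sigma_n$, the same recursive majorants $\MM_n$, and the same functional equation $\hat p(t) = (1+t)\QQ(t\hat p(t))$ resolved by the Holomorphic Implicit Function Theorem. The only cosmetic difference is that the paper presents the solution check and convergence estimate in slightly more detail, but the structure and all key formulas match.
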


\begin{proof}
First, uniqueness of $g$ follows from the asymptotic property \eqref{211206171855}.
Indeed, suppose $g'$ is another such function.
Then the difference $g - g'$ is a holomorphic function on $X_0 \times S_0$ which is uniformly Gevrey asymptotic to $0$ as $\hbar \to 0$ along the closed arc $\bar{A}_+$ of opening angle $\pi$.
By Nevanlinna's Theorem (\cite[pp.44-45]{nevanlinna1918theorie} and \cite{MR558468}; see also \cite[Theorem B.11]{MY2008.06492}), there can only be one holomorphic function on $S_0$ (namely, the constant function $0$) which is Gevrey asymptotic to $0$ as $\hbar \to 0$ along $\bar{A}_+$.
Thus, $g - g'$ must be identically zero.

To construct $g$, start by expanding $\GG$ as a uniformly convergent power series in $w$:
\eqntag{\label{211204131509}
	\GG (x, \hbar, w)
		= \sum_{m=0}^\infty \AA_m (x, \hbar) w^m
\fullstop
}

\paragraph*{Step 1: The Borel Transform.}
Let $a_m = a_m (x)$ be the $\hbar$-leading-order part of $\AA_m$ and let $\alpha_m (x, \xi) \coleq \Borel \big[ \AA_m \big] (x, \xi)$.
By the assumption \eqref{211215174141}, there is some $\epsilon > 0$ such that $\alpha_m$ is a holomorphic function on $X_0 \times \Xi$, where
\eqn{
	\Xi \coleq \set{\xi ~\big|~ \op{dist} (\xi, \Real_+) < \epsilon}
\fullstop{,}
}
with uniformly at most exponential growth at infinity in $\xi$ (cf. \autoref{211215140949}), and
\eqntag{\label{211215185405}
	\AA_m (x, \hbar) = a_m (x) + \Laplace \big[\, \alpha_m \,\big] (x, \hbar)
}
for all $(x, \hbar) \in X_0 \times S$ provided that the diameter $\RR$ is sufficiently small.

Dividing through by $\hbar$ and applying the analytic Borel transform to the equation $w = \hbar \GG (x, \hbar, w)$, we obtain the following nonlinear ordinary differential equation:
\eqntag{\label{211123195506}
	\del_\xi \sigma = \alpha_0 + \sum_{m=1}^\infty \Big( a_m \sigma^{\ast m} + \alpha_m \ast \sigma^{\ast m} \Big)
\fullstop{,}
}
where the unknown variables $w$ and $\sigma$ are related by $\sigma = \Borel [w]$ and $w = \Laplace [\sigma]$.
A solution of \eqref{211123195506} with initial condition $\sigma (x, 0) = a_0 (x)$ is equivalently the solution of the integral equation
\eqntag{\label{211123195801}
	\sigma = a_0 + \int_0^\xi \left[ \alpha_0 + \sum_{m=1}^\infty \Big( a_m \sigma^{\ast m} + \alpha_m \ast \sigma^{\ast m} \Big) \right] \dd{t}
\fullstop
}

\paragraph*{Step 2: Method of Successive Approximations.}
We solve this integral equation using the method of successive approximations.
To this end, define a sequence of holomorphic functions $\set{\sigma_n}_{n=0}^\infty$ on $X_0 \times \Xi$ as follows:
\eqntag{\label{211123200337}
	\sigma_0 \coleq a_0,
\qquad
	\sigma_1 \coleq \int\nolimits_0^\xi \big[ \alpha_0 + a_1 \sigma_0 \big] \dd{t}
\fullstop{,}
}
and for $n \geq 2$ by
\eqntag{\label{211123200340}
	\sigma_n \coleq \int\nolimits_0^\xi 
		\sum_{m=1}^n 
			\left( a_m  \hspace{-10pt} \sum_{i_1 + \cdots + i_m = n - m} \hspace{-15pt}
				\sigma_{i_1} \ast \cdots \ast \sigma_{i_m}
				+ \alpha_m \ast \hspace{-20pt} \sum_{i_1 + \cdots + i_m = n - m - 1} 
				\hspace{-15pt} \sigma_{i_1} \ast \cdots \ast \sigma_{i_m}
			\right) \dd{t}
\fullstop
}

\textbf{Main Technical Claim.}
\textit{The infinite series 
\eqntag{\label{211124091608}
	\sigma (x, \xi) \coleq \sum_{n=0}^\infty \sigma_n (x, \xi)
}
converges uniformly for all $(x, \xi) \in X_0 \times \Xi$ and defines a holomorphic solution of the integral equation \eqref{211123195801} with uniformly at-most-exponential growth at infinity in $\xi$; that is, there are constants $\DD, \KK > 0$ such that
\eqntag{\label{211204172313}
	\big| \sigma (x, \xi) \big| \leq \DD e^{\KK |\xi|}
\qquad
\text{$\forall (x, \xi) \in X_0 \times \Xi$\fullstop}
}
Furthermore, the formal Borel transform
\eqntag{
	\hat{\sigma} (x, \xi) =
	\hat{\Borel} [ \, \hat{g} \, ] (x, \xi)
		= \sum_{n=0}^\infty \tfrac{1}{n!} g_{n+1} (x) \xi^n
}
of the formal solution $\hat{g} (x, \hbar)$ is the Taylor series expansion of $\sigma$ at $\xi = 0$.
}

\enlargethispage{10pt}

The assertion of \autoref{211123193427} follows from this claim by defining
\eqntag{\label{211215174705}
	g (x, \hbar) \coleq \Laplace [\sigma] (x, \hbar) = \int_0^{+\infty} e^{-\xi/\hbar} \sigma (x, \xi) \dd{\xi}
\fullstop
}
Indeed, the exponential estimate \eqref{211204172313} implies that the Laplace transform of $\sigma$ is uniformly convergent for all $(x, \hbar) \in X_0 \times S_0$ where $S_0 = \set{\hbar ~\big|~ \Re (1/\hbar) > 1/\RR_0}$ as long as $\RR_0 < \KK^{-1}$.
We now turn to the proof of the Main Technical Claim.

\paragraph*{Step 3: Solution Check.}
First, assuming that the infinite series $\sigma$ is uniformly convergent for all $(x, \xi) \in X_0 \times \Xi$, we check that it satisfies \eqref{211123195801} by direct substitution.
Start by removing $\sigma_0$ and $\sigma_1$ via the following manipulation:
\eqns{
&\phantom{=}\;\;
	a_0 + \int_0^\xi 
		\left[ \BLUE{\alpha_0
			+ \sum_{m=1}^\infty a_m \left(\:\sum_{n=0}^\infty \sigma_n\right)^{\!\!\!\ast m}} 
			+ \sum_{m=1}^\infty \alpha_m \ast \left(\:\sum_{n=0}^\infty \sigma_n\right)^{\!\!\!\ast m}
		\right] \dd{t}
\\
&= a_0 + \int_0^\xi 
		\left[ \BLUE{\alpha_0 + a_1 \sigma_0 + a_1 \sum_{n=1}^\infty \sigma_n
			+ \sum_{m=2}^\infty a_m \left(\:\sum_{n=0}^\infty \sigma_n\right)^{\!\!\!\ast m}}
			+ \sum_{m=1}^\infty \alpha_m \ast \left(\:\sum_{n=0}^\infty \sigma_n\right)^{\!\!\!\ast m} 
		\right] \dd{t}
\\
&= \sigma_0 + \BLUE{\sigma_1} + \int_0^\xi
		\left[ \BLUE{a_1 \sum_{n=1}^\infty \sigma_n
			+ \sum_{m=2}^\infty a_m \left(\:\sum_{n=0}^\infty \sigma_n\right)^{\!\!\!\ast m}}
			+ \sum_{m=1}^\infty \alpha_m \ast \left(\:\sum_{n=0}^\infty \sigma_n\right)^{\!\!\!\ast m} 
		\right] \dd{t}
\fullstop
}
The goal is to show that the the integral in the above expression is $\sum_{n \geq 2} \sigma_n$.
Focus now on the expression inside the integral and use the formula
\eqntag{\label{211124093929}
	\left(\:\sum_{n=0}^\infty \sigma_n\right)^{\!\!\!\ast m}
		= \sum_{n=0}^\infty \sum_{i_1 + \cdots + i_m = n} \hspace{-15pt}
				\sigma_{i_1} \ast \cdots \ast \sigma_{i_m}
\fullstop
}
Then we manipulate it as follows:
\eqns{
&\phantom{=}\;\;
	a_1 \GREEN{\sum_{n=1}^\infty \sigma_n}
	+ \sum_{m=2}^\infty a_m \GREEN{\left(\:\sum_{n=0}^\infty \sigma_n\right)^{\!\!\!\ast m}}
	+ \sum_{m=1}^\infty \alpha_m \ast \GREEN{\left(\:\sum_{n=0}^\infty \sigma_n\right)^{\!\!\!\ast m}}
\\
&= a_1 \GREEN{\sum_{n=1}^\infty \sigma_n}
	+ \sum_{m=2}^\infty a_m
		\GREEN{\sum_{n=0}^\infty \sum_{i_1 + \cdots + i_m = n}
		\hspace{-15pt}
		\sigma_{i_1} \ast \cdots \ast \sigma_{i_m}}
	+ \sum_{m=1}^\infty \alpha_m \ast
		\GREEN{\sum_{n=0}^\infty \sum_{i_1 + \cdots + i_m = n}
		\hspace{-15pt}
		\sigma_{i_1} \ast \cdots \ast \sigma_{i_m}}
\fullstop
\intertext{Now we shift the summation index $n$ up by $1$ in the first green sum, by $m$ in the second, and by $m+1$ in the third:}
&= a_1 \GREEN{\sum_{n=\ORANGE{2}}^\infty \sigma_{\ORANGE{n-1}}}
	+ \sum_{m=2}^\infty a_m
		\GREEN{\sum_{n=\ORANGE{m}}^\infty \hspace{-10pt} \sum_{ \substack{ i_1, \ldots, i_m \geq 0 \\ i_1 + \cdots + i_m = \ORANGE{n - m}}}
		\hspace{-20pt}
		\sigma_{i_1} \ast \cdots \ast \sigma_{i_m}}
	+ \sum_{m=1}^\infty \alpha_m \ast \hspace{-5pt} 
		\GREEN{\sum_{n=\ORANGE{m+1}}^\infty \hspace{-10pt} \sum_{ \substack{ i_1, \ldots, i_m \geq 0 \\ i_1 + \cdots + i_m = \ORANGE{n - m -1}}} 
		\hspace{-20pt}
		\sigma_{i_1} \ast \cdots \ast \sigma_{i_m}}
\fullstop
\intertext{Notice that all terms in the second \GREEN{green} sum with $n < m$ are zero, so we can start the summation over $n$ from $n = 2$ (which is the lowest possible value of $m$) without altering the result.
Similarly, all terms in the third \GREEN{green} sum with $n < m + 1$ are zero, so we can start from $n = 2$.
The first \GREEN{green} sum is left unaltered.
Thus:}
&= a_1 \GREEN{\sum_{n=2}^\infty \sigma_{n-1}}
	+ \sum_{m=2}^\infty a_m
		\GREEN{\sum_{n=\ORANGE{2}}^\infty \hspace{-10pt} \sum_{ \substack{ i_1, \ldots, i_m \geq 0 \\ i_1 + \cdots + i_m = n - m}}
		\hspace{-20pt}
		\sigma_{i_1} \ast \cdots \ast \sigma_{i_m}}
	+ \sum_{m=1}^\infty \alpha_m \ast
		\GREEN{\sum_{n=\ORANGE{2}}^\infty \hspace{-10pt} \sum_{ \substack{ i_1, \ldots, i_m \geq 0 \\ i_1 + \cdots + i_m = n - m -1}} 
		\hspace{-20pt}
		\sigma_{i_1} \ast \cdots \ast \sigma_{i_m}}
\fullstop
\intertext{The advantage of this way of expressing the sums is that we can now interchange the summations over $m$ and $n$ in the second and third green sums:}
&= a_1 \GREEN{\sum_{n=2}^\infty \sigma_{n-1}}
	+ \GREEN{\sum_{n=2}^\infty}
		\sum_{m=2}^\infty a_m \hspace{-15pt}
		\GREEN{\sum_{ \substack{ i_1, \ldots, i_m \geq 0 \\ i_1 + \cdots + i_m = n - m}}
		\hspace{-20pt}
		\sigma_{i_1} \ast \cdots \ast \sigma_{i_m}}
	+ \GREEN{\sum_{n=2}^\infty}
		\sum_{m=1}^\infty \alpha_m \ast \hspace{-25pt}
		\GREEN{\sum_{\substack{ i_1, \ldots, i_m \geq 0 \\ i_1 + \cdots + i_m = n - m -1}} 
		\hspace{-20pt}
		\sigma_{i_1} \ast \cdots \ast \sigma_{i_m}}
\displaybreak
\\
&= \GREEN{\sum_{n=2}^\infty}
	\left\{
		a_1 \GREEN{\sigma_{n-1}}
		+ \sum_{m=2}^\infty a_m \hspace{-15pt} 
			\GREEN{\sum_{ \substack{ i_1, \ldots, i_m \geq 0 \\ i_1 + \cdots + i_m = n - m}}
			\hspace{-20pt}
			\sigma_{i_1} \ast \cdots \ast \sigma_{i_m}}
		+ \sum_{m=1}^\infty \alpha_m \ast \hspace{-25pt}
			\GREEN{\sum_{\substack{ i_1, \ldots, i_m \geq 0 \\ i_1 + \cdots + i_m = n - m -1}} 
			\hspace{-20pt}
			\sigma_{i_1} \ast \cdots \ast \sigma_{i_m}}
	\right\}
\fullstop
\intertext{Notice that the term $a_1 \sigma_{n-1}$ fits well into the first sum over $m$ to give the $m=1$ addend.
Thus:}
&= \GREEN{\sum_{n=2}^\infty}
	\left\{
		\sum_{m=1}^\infty a_m \hspace{-15pt} 
			\GREEN{\sum_{ \substack{ i_1, \ldots, i_m \geq 0 \\ i_1 + \cdots + i_m = n - m}}
			\hspace{-20pt}
			\sigma_{i_1} \ast \cdots \ast \sigma_{i_m}}
		+ \sum_{m=1}^\infty \alpha_m \ast \hspace{-25pt}
			\GREEN{\sum_{\substack{ i_1, \ldots, i_m \geq 0 \\ i_1 + \cdots + i_m = n - m -1}} 
			\hspace{-20pt}
			\sigma_{i_1} \ast \cdots \ast \sigma_{i_m}}
	\right\}
\\
&= \GREEN{\sum_{n=2}^\infty} \sum_{m=1}^\infty
	\left\{
		a_m \hspace{-15pt} 
			\GREEN{\sum_{ \substack{ i_1, \ldots, i_m \geq 0 \\ i_1 + \cdots + i_m = n - m}}
			\hspace{-20pt}
			\sigma_{i_1} \ast \cdots \ast \sigma_{i_m}}
		+  \alpha_m \ast \hspace{-25pt}
			\GREEN{\sum_{\substack{ i_1, \ldots, i_m \geq 0 \\ i_1 + \cdots + i_m = n - m -1}} 
			\hspace{-20pt}
			\sigma_{i_1} \ast \cdots \ast \sigma_{i_m}}
	\right\}
\fullstop
\intertext{Finally, notice that that both sums are empty for $m > n$, so:}
&= \GREEN{\sum_{n=2}^\infty} \sum_{m=1}^{\ORANGE{n}}
	\left\{
		a_m \hspace{-15pt} 
			\GREEN{\sum_{ \substack{ i_1, \ldots, i_m \geq 0 \\ i_1 + \cdots + i_m = n - m}}
			\hspace{-20pt}
			\sigma_{i_1} \ast \cdots \ast \sigma_{i_m}}
		+  \alpha_m \ast \hspace{-25pt}
			\GREEN{\sum_{\substack{ i_1, \ldots, i_m \geq 0 \\ i_1 + \cdots + i_m = n - m -1}} 
			\hspace{-20pt}
			\sigma_{i_1} \ast \cdots \ast \sigma_{i_m}}
	\right\}
\fullstop
}
The sum over $m$ is precisely the expression inside the integral in \eqref{211123200340} defining $\sigma_n$.
This shows that $\sigma$ satisfies the integral equation \eqref{211123195801}.

\paragraph*{Step 4: Convergence.}
Now we show that $\sigma$ is a uniformly convergent infinite series on $X_0 \times \Xi$ and therefore defines a holomorphic function.
In the process, we also establish the estimate \eqref{211204172313}.

Let $\BB, \CC, \LL > 0$ be such that for all $(x, \xi) \in X_0 \times \Xi$ and all $m \in \Natural$,
\eqntag{\label{211204171819}
	\big| a_m (x) \big| \leq \CC \BB^m
\qqtext{and}
	\big| \alpha_m (x, \xi) \big| \leq \CC \BB^m e^{\LL |\xi|}
\fullstop
}
We claim that there are $\DD, \MM > 0$ such that for all $(x, \xi) \in X_0 \times \Xi$ and $n \in \Natural$,
\eqntag{\label{211204172105}
	\big| \sigma_n (x, \xi) \big| \leq \DD \MM^n \frac{|\xi|^n}{n!} e^{\LL |\xi|}
\fullstop
}
If we achieve \eqref{211204172105}, then the uniform convergence and the exponential estimate \eqref{211204172313} both follow at once because
\eqn{
	\big| \sigma (x, \xi) \big|
		\leq \sum_{n=0}^\infty \big| \sigma_n (x, \xi) \big|
		\leq \sum_{n=0}^\infty \DD \MM^n \frac{|\xi|^n}{n!} e^{\LL |\xi|}
		\leq \DD e^{(\MM + \LL) |\xi|}
\fullstop
}
To demonstrate \eqref{211204172105}, we proceed in two steps.
First, we construct a sequence of positive real numbers $\set{\MM_n}_{n=0}^\infty$ such that for all $n$ and all $(x, \xi) \in X_0 \times \Xi$,
\eqntag{\label{211204173527}
	\big| \sigma_n (x, \xi) \big| \leq \MM_n \frac{|\xi|^n}{n!} e^{\LL |\xi|}
\fullstop
}
We will then show that there are constants $\DD, \MM$ such that $\MM_n \leq \DD \MM^n$ for all $n$.

\paragraph*{Step 4.1: Construction of $\set{\MM_n}$.}
We can take $\MM_0 \coleq \CC$ and $\MM_1 \coleq \CC (1 + \BB \MM_0)$ because $\sigma_0 = a_0$ and
\eqn{
	\big| \sigma_1 \big|
	\leq \int_0^\xi \big( |\alpha_0| + |a_1| |\sigma_0| \big) |\dd{t}|
	\leq \CC (1 + \BB \MM_0) \int_0^{|\xi|} e^{\LL r} \dd{r}
	\leq \CC (1 + \BB \MM_0) |\xi| e^{\LL |\xi|}
\fullstop{,}
}
where in the final step we used \autoref{180824194855}.
Let us assume now that we have constructed $\MM_0, \ldots, \MM_{n-1}$ such that $|\sigma_i| \leq \MM_i \frac{|\xi|^i}{i!} e^{\LL |\xi|}$ for all $i = 0, \ldots, n-1$.
Then we use formula \eqref{211123200340} together with \autoref{180824194855} and \autoref{211205075846} in order to derive an estimate for $\sigma_n$:
\eqns{
	|\sigma_n|
	&\leq \int_0^\xi \sum_{m=1}^n
			\left( |a_m|  \hspace{-15pt} \sum_{ i_1 + \cdots + i_m = n - m} \hspace{-15pt}
				\big| \sigma_{i_1} \ast \cdots \ast \sigma_{i_m} \big|
				+ \hspace{-20pt} \sum_{ i_1 + \cdots + i_m = n - m - 1} 
				\hspace{-15pt} \big| \alpha_m \ast \sigma_{i_1} \ast \cdots \ast \sigma_{i_m} \big|
			\right) |\dd{t}|
\\
	&\leq \sum_{m=1}^n
			\left( \CC \BB^m \hspace{-20pt} \sum_{ i_1 + \cdots + i_m = n - m } \hspace{-15pt}
				\MM_{i_1} \cdots \MM_{i_m}
				+ \CC \BB^m \hspace{-20pt} \sum_{ i_1 + \cdots + i_m = n - m - 1 } 
				\hspace{-15pt} \MM_{i_1} \cdots \MM_{i_m}
			\right) 
			\int_0^\xi \frac{|t|^{n-1}}{(n-1)!} e^{\LL |t|} |\dd{t}|
\\
	&\leq \sum_{m=1}^n
			\CC \BB^m
			\left( ~\sum_{ i_1 + \cdots + i_m = n - m } \hspace{-15pt}
				\MM_{i_1} \cdots \MM_{i_m}
				+ \hspace{-10pt} \sum_{ i_1 + \cdots + i_m = n - m - 1 } 
				\hspace{-15pt} \MM_{i_1} \cdots \MM_{i_m}
			\right)
			\frac{|\xi|^n}{n!} e^{\LL |\xi|}
\fullstop
}
Thus, this expression allows us to define the constant $\MM_n$ for $n \geq 2$.
In fact, a quick glance at this formula reveals that it can be extended to $n = 0, 1$ by defining
\eqntag{\label{211206124551}
	\MM_n \coleq
		\sum_{m=0}^n
			\CC \BB^m
			\left( ~\sum_{ i_1 + \cdots + i_m = n - m } \hspace{-15pt}
				\MM_{i_1} \cdots \MM_{i_m}
				+ \hspace{-10pt} \sum_{ i_1 + \cdots + i_m = n - m - 1 } 
				\hspace{-15pt} \MM_{i_1} \cdots \MM_{i_m}
			\right)
\fullstop
}
Indeed, if $m = 0$, then the two sums inside the brackets are nonzero only when $n = 0$, so we recover $\MM_0 = \CC$.
Likewise, if $n = 1$, then the only the terms $m = 0$ and $m = 1$ are nonzero in this formula, and they are respectively $0 + \CC$ and $\CC \BB \MM_0 + 0$, so we recover $\MM_1$ defined previously.

\paragraph*{Step 4.2: Bounding $\MM_n$.}
To see that $\MM_n \leq \DD \MM^n$ for some $\DD, \MM > 0$, consider the following power series in an abstract variable $t$:
\eqntag{\label{211206124848}
	\hat{p} (t) \coleq \sum_{n=0}^\infty \MM_n t^n
\qqtext{and}
	\QQ (t) \coleq \sum_{m=0}^\infty \CC \BB^m t^m
\fullstop
}
Notice that $\QQ (t)$ is a convergent and $\QQ (0) = \CC = \MM_0$.
We will show that $\hat{p} (t)$ is also a convergent power series.
The key observation is that $\hat{p}$ satisfies the following functional equation:
\eqntag{\label{211206125154}
	\hat{p} (t) = (1+t) \QQ \big( t \hat{p} (t) \big)
\fullstop
}
This equation was found by trial and error, and it is straightforward to verify by direct substitution of the expansions \eqref{211206124848} and comparing the coefficients of $t^n$ using the defining formula \eqref{211206124551} for $\MM_n$.
Explicitly, the righthand side of \eqref{211206125154} expands as follows:
\eqns{
	&\phantom{=}~~
	(1+t) \sum_{m=0}^\infty \CC \BB^m \left( t \sum_{n=0}^\infty \MM_n t^n \right)^m
\displaybreak
\\
	&= (1+t) \sum_{n=0}^\infty \sum_{m=0}^\infty \CC \BB^m
		\hspace{-10pt} \sum_{ i_1 + \cdots + i_m = n } \hspace{-15pt}
		\MM_{i_1} \cdots \MM_{i_m} t^{n+m}
\\
	&= (1+t) \sum_{n=0}^\infty \sum_{m=0}^\infty \CC \BB^m
		\hspace{-10pt} \sum_{ i_1 + \cdots + i_m = \ORANGE{n-m} } \hspace{-15pt}
		\MM_{i_1} \cdots \MM_{i_m} t^{\ORANGE{n}}
\\
	&= \sum_{n=0}^\infty \sum_{m=0}^\infty \CC \BB^m
		\left( ~\sum_{ i_1 + \cdots + i_m = n - m } \hspace{-15pt}
				\MM_{i_1} \cdots \MM_{i_m} t^n
				+ \hspace{-10pt} \sum_{ i_1 + \cdots + i_m = n - m } 
				\hspace{-15pt} \MM_{i_1} \cdots \MM_{i_m} t^{n+1}
		\right)
\\
	&= \sum_{n=0}^\infty \sum_{m=0}^{\ORANGE{n}} \CC \BB^m
			\left( ~\sum_{ i_1 + \cdots + i_m = n - m } \hspace{-15pt}
				\MM_{i_1} \cdots \MM_{i_m}
				+ \hspace{-10pt} \sum_{ i_1 + \cdots + i_m = \ORANGE{n - m - 1} } 
				\hspace{-15pt} \MM_{i_1} \cdots \MM_{i_m}
			\right) t^n
\fullstop
}
Now, consider the following holomorphic function in two variables $(t,p)$:
\eqntag{
	\FF (t, p) \coleq - p + (1+t) \QQ (tp)
\fullstop
}
It has the following properties:
\eqn{
	\FF (0, \CC) = 0
\qqtext{and}
	\evat{\frac{\del \PP}{\del p}}{(t,p) = (0, \CC)} = - 1 \neq 0
\fullstop
}
By the Holomorphic Implicit Function Theorem, there exists a unique holomorphic function $p(t)$ near $t = 0$ such that $p(0) = \CC$ and $\FF \big(t, p(t)\big) = 0$.
Therefore, $\hat{p} (t)$ must be the convergent Taylor series expansion of $p(t)$ at $t = 0$, so its coefficients grow at most exponentially: there are constants $\DD, \MM > 0$ such that $\MM_n \leq \DD \MM^n$.
This completes the proof of the Main Technical Claim and hence of \autoref{211123193427}.
\end{proof}

At last, we are able to collect all our work in order to finish the proof of the Gevrey Asymptotic Implicit Function Theorem (\autoref{211125100306}) with $\NN = 1$.

\begin{proof}[Proof of \autoref{211125100306} ($\NN = 1$)]
By the Formal Implicit Function Theorem (\autoref{211126190846}), there is a subdomain $X_0 \subset X$ containing $x_0$ such that the equation $\FF (x, \hbar, z) = 0$ has a unique formal solution $\hat{f}$ satisfying $f_0 (x_0) = z_0$.
Let $f_0, f_1$ be its leading- and next-to-leading-order parts in $\hbar$.
As in \autoref{211126191222}, we change variables as $z = f_0 + \hbar f_1 + \hbar w$ to transform the equation $\FF (x, \hbar, z) = 0$ into $w = \hbar \GG (x, \hbar, w)$.
By \autoref{211123193427}, this equation has a unique holomorphic solution $w = g(x,\hbar)$ on $X_0 \times S_0$ for some sectorial subdomain $S_0 \subset S$ still with opening $A$ and admitting a uniform Gevrey asymptotic expansion as $\hbar \to 0$ along $\bar{A}$.
Finally, we define $f \coleq f_0 + \hbar f_1 + \hbar g$ which is readily seen to have all the desired properties.
\end{proof}

\end{appendices}

\newpage
\begin{adjustwidth}{-2cm}{-1.5cm}
{\footnotesize
\bibliographystyle{nikolaev}
\bibliography{/Users/Nikita/Documents/Library/References}
}
\end{adjustwidth}
\end{document}